\newcommand{\R}{\mathbb{R}}
\newcommand{\C}{\mathbb{C}}
\newcommand{\N}{\mathbb{N}}
\newcommand*{\longeq}{\ratio\Longleftrightarrow}
\newcommand{\supp}[1]{\operatorname{supp}\left(#1\right) }
\theoremstyle{plain}
\newtheorem{theorem}{Theorem}[section]
\newtheorem*{theorem*}{Theorem}
\newtheorem{proposition}[theorem]{Proposition}
\newtheorem{corollary}[theorem]{Corollary}
\newtheorem{lemma}[theorem]{Lemma}
\theoremstyle{definition} 
\newtheorem{definition}[theorem]{Definition} 
\newtheorem{remark}[theorem]{Remark} 
\numberwithin{equation}{section}
\title[\texorpdfstring{B.V.P.'s with $A_{\infty}$-measures on the boundary}{}]{\texorpdfstring{Boundary Value Problems in graph Lipschitz domains in the plane with $A_{\infty}$}{}-measures on the boundary}
\author[F. Ballesta-Yag\"ue]{Fernando Ballesta-Yag\"ue}
\address[F. Ballesta-Yag\"ue]{Departamento de An\'alisis Matem\'atico y Matem\'atica Aplicada, Facultad de Ciencias Matem\'aticas, Universidad Complutense de Madrid \hfill\break\indent Pl. de las Ciencias 3, 28040 Madrid, Spain}
\email{ferballe@ucm.es}
\author[M. J. Carro]{Mar\'ia J. Carro}
\address[M. J. Carro]{Departamento de An\'alisis Matem\'atico y Matem\'atica Aplicada, Facultad de Ciencias Matem\'aticas, Universidad Complutense de Madrid \hfill\break\indent Pl. de las Ciencias 3, 28040 Madrid, Spain}
\email{mjcarro@ucm.es}
\begin{document}

 \subjclass[2010]{{Primary: 35J25; secondary: 35J05, 46E30, 	42B37}}
 

\keywords{Dirichlet problem, Neumann problem, Regularity problem, Lipschitz graph domain, Lebesgue spaces, Muckenhoupt weights}
\thanks{Partially supported by grants PID2020-113048GB-I00 funded by MCIN/AEI/10.13039/501100011033,  CEX2019-000904-S funded by MCIN/AEI/ 10.13039/501100011033 and Grupo UCM-970966 (Spain). The first named author also benefited from an FPU Grant FPU21/06111 from Ministerio de Universidades (Spain).}

\begin{abstract}
    We prove several results for the Dirichlet, Neumann and Regularity problems for the Laplace equation in graph Lipschitz domains in the plane, considering $A_{\infty}$-measures on the boundary. More specifically, we study the $L^{p,1}$-solvability for the Dirichlet problem, complementing results of \cite{kenigWeighted} and \cite{CarroOrtiz}. Then, we study $L^p$-solvability of the Neumann problem, obtaining a range of solvability which is empty in some cases, a clear difference with the arc-length case. When it is not empty, it is an interval, and we consider solvability at its endpoints, establishing conditions for Lorentz space solvability when $p>1$ and atomic Hardy space solvability when $p=1$. Solving the Lorentz endpoint leads us to a two-weight Sawyer-type inequality, for which we give a sufficient condition. Finally, we show how to adapt to the Regularity problem the results for the Neumann problem.
\end{abstract}

\maketitle

\section{Introduction} \label{sec:section1_intro}

From the late 70's, beginning with the Dirichlet problem for the Laplace equation \cite{DahlbergHarmonicMeasure}, there has been an intensive attempt to study boundary value problems in domains with low regularity.  Let us just mention for now the recent  work \cite{AHMMT2020}, where the authors establish the minimal assumptions on the domain for the Dirichlet problem for the Laplacian to be well-posed in $L^p$ for some $p<\infty$, and the works of Mourgoglou and Tolsa \cite{MTregularity, MTneumann}, where they solve the $L^p$-Neumann and Regularity problems in dimensions $3$ and higher in a more general context than that of Lipschitz domains. 

In his PhD thesis \cite{kenigWeighted}, Kenig gave an application of weighted theory to this kind of problems. More specifically, he used the theory of Muckenhoupt's $A_p$-weights \cite{M1972}, together with results about weighted Hardy spaces, to study the Dirichlet problem in graph Lipschitz domains with $A_{\infty}$-measures on the boundary, obtaining the following theorem. For a detailed exposition of the notation involved, see Section \ref{sec:notation}.

\begin{theorem}[Theorem 4.4 in \cite{kenigWeighted}]\label{thm:thm4.4_Kenig}
    Let $\Omega$ be a graph Lipschitz domain and let $\Phi\colon \R_{+}^{2}\to \Omega$ be the conformal mapping specified in Section \ref{sec:notation}. Let $\nu\in A_{\infty}(\Lambda)$ and let 
 \begin{equation}\label{pfi}
    p_{\Phi}(\nu)
    \coloneqq 
    \inf\{q\in (1,\infty): \Phi(\nu)\in A_{q}(\R)\}.
\end{equation}
    where $\Phi(\nu)$ denotes $(\nu\circ \Phi)|\Phi'|$.     
    If $p\in (p_{\Phi}(\nu),\infty)$, given any $f \in L^p(\Lambda,  \nu)$, there exists $u$ harmonic in $\Omega$ so that $u$ converges to $f$ non-tangentially and  $\left\|\mathcal{M}_{\alpha_0} u\right\|_{L^p(\Lambda,  \nu)} \lesssim \|f\|_{L^p(\Lambda, \nu)}$. Moreover, if $p\in (1,p_{\Phi}(\nu)]$, there exists $f \in  L^p(\Lambda,  \nu)$ such that there exists no $u$ harmonic in $\Omega$ so that $u$ converges to $f$ non-tangentially and $\mathcal{M}_{\alpha_0} u \in L^p(\Lambda,  \nu)$.
\end{theorem}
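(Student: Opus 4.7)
The plan is to exploit the conformal map $\Phi\colon \R_{+}^{2}\to \Omega$ to transfer the whole problem to the upper half-plane, where classical weighted theory on $\R$ applies directly. Given boundary data $f\in L^{p}(\Lambda,\nu)$, set $g\coloneqq f\circ \Phi$, which by the change of variables defining $\Phi(\nu)=(\nu\circ\Phi)|\Phi'|$ satisfies $\|g\|_{L^{p}(\R,\Phi(\nu))}=\|f\|_{L^{p}(\Lambda,\nu)}$. Solve the Dirichlet problem in $\R^{2}_{+}$ by the Poisson integral, $v(x,t)=(P_{t}*g)(x)$, and define the candidate harmonic solution on $\Omega$ by $u\coloneqq v\circ \Phi^{-1}$. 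Non-tangential convergence $v\to g$ a.e. transports to $u\to f$ a.e.\ on $\Lambda$ because, for graph Lipschitz $\Omega$, the image under $\Phi$ of a truncated cone in $\R^{2}_{+}$ is comparable to a non-tangential approach region in $\Omega$ with an aperture that can be chosen as $\alpha_{0}$ (this is a standard consequence of the boundedness of $\arg \Phi'$ coming from the Lipschitz graph assumption).

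For the positive direction $p>p_{\Phi}(\nu)$, by the openness of the $A_{q}$ classes we have $\Phi(\nu)\in A_{p}(\R)$. The pointwise estimate $\mathcal{M}_{\alpha_{0}} u(\Phi(x))\lesssim \widetilde{N}_{\beta}v(x)$, where $\widetilde{N}_{\beta}$ is the non-tangential maximal function on $\R_{+}^{2}$ with some aperture $\beta$, combined with the classical control $\widetilde{N}_{\beta}v\lesssim Mg$ for the Hardy--Littlewood maximal operator $M$ on $\R$, reduces matters to the weighted bound $\|Mg\|_{L^{p}(\Phi(\nu))}\lesssim \|g\|_{L^{p}(\Phi(\nu))}$, which is Muckenhoupt's theorem since $\Phi(\nu)\in A_{p}$. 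Undoing the change of variables gives $\|\mathcal{M}_{\alpha_{0}} u\|_{L^{p}(\Lambda,\nu)}\lesssim \|f\|_{L^{p}(\Lambda,\nu)}$.

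For the sharpness statement, observe that if $p\leq p_{\Phi}(\nu)$ then $\Phi(\nu)\notin A_{p}$ (again by the openness of $A_{p}$, otherwise the infimum in \eqref{pfi} would be strictly smaller). Hence $M$ is unbounded on $L^{p}(\R,\Phi(\nu))$, and one can select $g\in L^{p}(\R,\Phi(\nu))$ with $Mg\notin L^{p}(\R,\Phi(\nu))$; taking $f\coloneqq g\circ \Phi^{-1}$ produces data in $L^{p}(\Lambda,\nu)$. The converse pointwise comparison $Mg(x)\lesssim \mathcal{M}_{\alpha_{0}}u(\Phi(x))$, valid for \emph{any} harmonic $u$ on $\Omega$ whose non-tangential limit is $f$ (which follows because a non-tangential maximal bound forces $u$ to agree with the Poisson extension $v$ after transport), shows that $\mathcal{M}_{\alpha_{0}}u\in L^{p}(\Lambda,\nu)$ would imply $Mg\in L^{p}(\Phi(\nu))$, a contradiction.

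The main technical obstacle, and the reason the result really requires the planar graph Lipschitz setting rather than a softer hypothesis, is the bi-Lipschitz comparability of non-tangential cones under $\Phi$ and $\Phi^{-1}$. This rests on the fact that for a graph Lipschitz $\Omega\subset\R^{2}$ the conformal map $\Phi$ enjoys $\arg\Phi'\in L^{\infty}$ (Kenig's classical result), which is what allows $\alpha_{0}$ to be picked uniformly so that the comparison of $\mathcal{M}_{\alpha_{0}}u$ with $\widetilde{N}_{\beta}v$ holds in both directions. Once this cone comparability is in hand, the remainder is essentially weighted Calder\'on--Zygmund theory on the line.
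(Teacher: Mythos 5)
Your positive direction ($p>p_{\Phi}(\nu)$) is exactly the argument the paper sketches after the statement: transfer the datum to $\R_{+}^{2}$, take the Poisson integral, dominate the non-tangential maximal function by $\mathcal{M}_{hl}(f\circ\Phi)$, invoke Muckenhoupt's theorem for $\Phi(\nu)\in A_{p}$, and transfer back using the cone comparison \eqref{eq:thm1.1_JerisonKenig}. That part is fine.

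The negative direction is where you diverge from the paper and where there is a genuine gap. The paper's route is: if the problem were solvable in $L^{p}(\Lambda,\nu)$, one shows the Hilbert transform is bounded on $L^{p}(\R,\Phi(\nu))$, and then Hunt--Muckenhoupt--Wheeden forces $\Phi(\nu)\in A_{p}(\R)$, contradicting $p\le p_{\Phi}(\nu)$. You instead try to build an explicit bad datum and rule out \emph{every} competitor $u$ via the pointwise bound $Mg(x)\lesssim \mathcal{M}_{\alpha_{0}}u(\Phi(x))$. That bound is not ``valid for any harmonic $u$ whose non-tangential limit is $f$'': what is true (for $g\ge 0$, which you do not arrange) is $Mg\lesssim \sup_{t}P_{t}*g$, i.e.\ the bound holds for the \emph{Poisson extension}. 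To transfer it to an arbitrary admissible $u$ you need the representation theorem that a harmonic function on $\R_{+}^{2}$ with non-tangential maximal function in $L^{p}(\Phi(\nu))$ and non-tangential boundary values $g$ must equal $P_{t}*g$. You dismiss this in a parenthesis, but it is precisely the crux of the uniqueness half of the statement: without it you have only excluded the Poisson extension, not all harmonic candidates, and the theorem asserts that \emph{no} admissible $u$ exists. This weighted Fatou-type representation (for $\Phi(\nu)\in A_{\infty}$ but $\notin A_{p}$) is a nontrivial result of the Kenig/Garc\'{\i}a-Cuerva theory and must be proved or cited, and one must also check that the Poisson integral of your $g$ converges at all. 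Two smaller points: passing from ``$M$ unbounded on $L^{p}(\Phi(\nu))$'' to a single $g$ with $Mg\notin L^{p}(\Phi(\nu))$ needs a closed-graph or direct-construction argument, and your chosen $g$ should be taken nonnegative for the lower Poisson bound to apply. The paper's Hilbert-transform route avoids all of this, which is why it is the standard proof.
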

The idea of the proof is to consider the problem in $\R_{+}^{2}$ with boundary datum $f\circ\Phi$. In $\R_{+}^{2}$, a explicit solution is the Poisson integral of $f\circ\Phi$, and the non-tangential maximal operator of this Poisson integral can be bounded by $\mathcal{M}_{hl}(f\circ\Phi)$. So if $\Phi(\nu)\in A_p(\R)$,  this operator is bounded in $L^p(\Phi(\nu))$. Therefore, one can solve the Dirichlet problem in $\R_{+}^{2}$ with measure $\Phi(\nu)$ on $\R\equiv\partial\R_{+}^{2}$, and transfer the estimates back to $\Omega$ via $\Phi$. Conversely, if the problem is solvable in $L^p(\Lambda,\nu)$, then it can be proved that the Hilbert transform is bounded in $L^p(\Phi(\nu))$, and hence $\Phi(\nu)\in A_p(\R)$ (\cite{HMW}).

This technique reached its peak in \cite{jerisonKenig}, where the authors stated the $L^p$-solvability of the Dirichlet, Neumann and Regularity problems in chord-arc domains in the plane with arc-length measure on the boundary. However, it was superseded later by other methods, such as layer potentials (\cite{VerchotaLayer, DahlbergKenig}) or harmonic measure (\cite{JK1982,FJK1984}), that also worked in higher dimensions and for more general differential operators.

Nevertheless, the conformal mapping technique has recently been recovered in \cite{CarroOrtiz}, where they prove the following endpoint result for the Dirichlet problem for the Laplacian in the case of arc-length measure $ds$ (i.e. $\nu=1$). 

\begin{theorem}[Theorem 2.4 in \cite{CarroOrtiz}]\label{thm:thm2.4_CarroOrtiz}
    In the setting of Theorem \ref{thm:thm4.4_Kenig}, if $|\Phi'|\in A_{p_{\Phi}(ds)}^{\mathcal{R}}(\R)$, then the Dirichlet problem for the Laplacian is solvable in $X=L^{p_{\Phi}(ds),1}(\Lambda,ds)$ taking $Y=L^{p_{\Phi}(ds),\infty}(\Lambda,ds)$.
\end{theorem}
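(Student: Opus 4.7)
The plan is to adapt Kenig's conformal-map reduction from Theorem~\ref{thm:thm4.4_Kenig} to the endpoint exponent $p=p_{\Phi}(ds)$, with weighted Lorentz spaces in place of weighted Lebesgue spaces. Given $f\in L^{p_{\Phi}(ds),1}(\Lambda,ds)$, I would set $\tilde f\coloneqq f\circ\Phi$ on $\R\equiv\partial\R_{+}^{2}$. A direct change of variables (the boundary measure $ds$ pushes forward to $|\Phi'|\,dx$, so the distribution functions of $f$ with respect to $ds$ and of $\tilde f$ with respect to $|\Phi'|\,dx$ coincide) yields
$$\|\tilde f\|_{L^{p_{\Phi}(ds),1}(\R,\,|\Phi'|\,dx)}\;\approx\;\|f\|_{L^{p_{\Phi}(ds),1}(\Lambda,ds)},$$
and the analogous identity for the $L^{p,\infty}$ norms applied to the (pulled-back) non-tangential maximal functions.

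Next I would let $u_{0}$ be the Poisson integral of $\tilde f$ in $\R_{+}^{2}$ and define $u\coloneqq u_{0}\circ\Phi^{-1}$, which is harmonic in $\Omega$ since $\Phi$ is conformal. Because $\Omega$ is a graph Lipschitz domain, the conformal image of a non-tangential cone to $\R$ is comparable (with possibly different aperture) to a non-tangential cone to $\Lambda$, exactly as in the proof of Theorem~\ref{thm:thm4.4_Kenig}. This gives the pointwise control
$$\bigl(\mathcal{M}_{\alpha_{0}} u\bigr)\circ\Phi(x)\;\lesssim\;\mathcal{M}_{hl}\tilde f(x),\qquad x\in\R,$$
so that, after the change of variables above, the theorem reduces to the restricted weak-type inequality
$$\|\mathcal{M}_{hl}\tilde f\|_{L^{p_{\Phi}(ds),\infty}(\R,\,|\Phi'|\,dx)}\;\lesssim\;\|\tilde f\|_{L^{p_{\Phi}(ds),1}(\R,\,|\Phi'|\,dx)}.$$

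This last estimate is precisely what the hypothesis $|\Phi'|\in A_{p_{\Phi}(ds)}^{\mathcal{R}}(\R)$ encodes: the class $A_{p}^{\mathcal{R}}$ is exactly the class of weights for which $\mathcal{M}_{hl}$ is of restricted weak type $(p,p)$ on characteristic functions, which upgrades to a bona fide $L^{p,1}\to L^{p,\infty}$ bound for sublinear operators by the standard decomposition of $\tilde f$ along its level sets and summation of the estimates. Non-tangential a.e.\ convergence $u\to f$ on $\Lambda$ is inherited from the non-tangential convergence $u_{0}\to\tilde f$ a.e.\ on $\R$, which is available because $\tilde f\in L^{1}_{\mathrm{loc}}(\R)$ (the weight $|\Phi'|$ is $A_{\infty}$, hence satisfies a local reverse-Hölder inequality that makes $L^{p_{\Phi}(ds),1}(|\Phi'|\,dx)$ embed into $L^{1}_{\mathrm{loc}}(\R)$), and this property is preserved by $\Phi^{-1}$ thanks to the cone-to-cone correspondence.

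The main obstacle I anticipate is the endpoint character of the argument: by definition of $p_{\Phi}(ds)$, the weight $|\Phi'|$ need not belong to $A_{p_{\Phi}(ds)}(\R)$, so the classical weighted strong $(p,p)$ bound for $\mathcal{M}_{hl}$ is unavailable and the whole transfer must take place inside Lorentz scales. Carrying out the restricted weak-type estimate on indicators, checking that the constant is preserved under the passage to general $L^{p,1}$ data, and matching the Lorentz quasi-norms under the conformal change of variables at the endpoint is where the bulk of the technical work will lie.
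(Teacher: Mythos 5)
Your proposal follows essentially the same route as the paper, which proves the more general Theorem \ref{thm:dirichletEndpointPesos} (with $\nu\in A_\infty(\Lambda)$ and $\Phi(\nu)$ in place of $|\Phi'|$) by exactly this conformal transfer: pull back the datum by $\Phi$, take its Poisson integral in $\R_{+}^{2}$, use the cone correspondence \eqref{eq:thm1.1_JerisonKenig} together with $\mathcal{M}_{\alpha}(P_{y}\ast \tilde f)\lesssim \mathcal{M}_{hl}\tilde f$, apply the restricted weak-type bound encoded by $A_{p}^{\mathcal{R}}$, and transfer the Lorentz norms back via the change of variables. Note also that the paper's definition of $A_p^{\mathcal{R}}$ is already the $L^{p,1}\to L^{p,\infty}$ bound for $\mathcal{M}_{hl}$, so your upgrade from indicators is not needed here.

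The one step where your justification is insufficient is the well-definedness of the Poisson integral $u_{0}=P_{y}\ast\tilde f$ (and hence of its non-tangential limits). Local integrability of $\tilde f$ is not enough: the Poisson integral requires the global condition $\int_{\R}|\tilde f(x)|(1+x^{2})^{-1}\,dx<\infty$, and a local reverse-H\"older inequality for $|\Phi'|$ gives no control at infinity (for instance $\tilde f(x)=x$ is locally integrable but has a divergent Poisson integral). The paper handles this by invoking the estimate from \cite{CNO} that $\int_{\R}|\tilde f(x)|(1+|x|)^{-1}\,dx\lesssim\|\tilde f\|_{L^{p,1}(\R,|\Phi'|)}$ whenever $|\Phi'|\in A_{p}^{\mathcal{R}}$, which can be obtained by a duality/H\"older argument over dyadic annuli using $A_{p}^{\mathcal{R}}\subset\bigcup_{\varepsilon>0}A_{p+\varepsilon}$. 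Once that global integrability is in place, the rest of your argument goes through as written.
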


 Followed by this work,  the conformal mapping technique has been used in \cite{CNO}, \cite{zarembaCarro} and  \cite{transmissionCarro}  to  study $L^p$-solvability results and obtain new endpoint results for some boundary value problems (B.V.P.'s from now on) for the Laplace equation. This approach provides a way to understand B.V.P.'s  via weighted theory and, conversely, it motivates the study of several problems involving weights.  However, the results obtained so far, in the case of the Dirichlet and Neumann problems, only treat the case of arc-length measure on the boundary, instead of the more general $A_{\infty}$-measures originally considered by Kenig in Theorem \ref{thm:thm4.4_Kenig}.

The main goal of this paper is to address this question for the $L^{p,1}$-endpoint of the Dirichlet problem, the $L^p$-solvability for the Neumann problem, and its corresponding $L^{p,1}$ and $H_{at}^{1}$-endpoints. As a consequence, we obtain the $L^p$ and $L^{p,1}$-solvability of the the Regularity problem.
 
 We should mention here that, for the $L^p(\Lambda,\nu)$-Neumann and Regularity problems, we obtain a range of solvability which is empty in some cases, a clear difference with the arc-length case. When it is not empty, it is an interval, and hence it makes sense to consider solvability at two endpoints with boundary data in Lorentz spaces when $p>1$, and in $H_{at}^{1}$ when $p=1$. Moreover, solving the Lorentz endpoints leads us to a new two-weight Sawyer-type inequality.

\subsection{Main results}

The first result is a quite direct generalization of Theorem \ref{thm:thm2.4_CarroOrtiz}. We will always work with the Laplacian, so we omit it from the statements. See Section \ref{sec:notation} for the definitions involved.

\begin{theorem}\label{thm:dirichletEndpointPesos}
    Let $\nu\in A_{\infty}(\Lambda)$. If $\Phi(\nu)\in A_{p_{\Phi}(\nu)}^{\mathcal{R}}(\R)$, then the Dirichlet problem is solvable in $X=L^{p_{\Phi}(\nu),1}(\Lambda,d\nu)$ taking $Y=L^{p_{\Phi}(\nu),\infty}(\Lambda,d\nu)$.
\end{theorem}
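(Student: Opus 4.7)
The plan is to follow Kenig's conformal mapping strategy from Theorem \ref{thm:thm4.4_Kenig}, upgraded at the endpoint in the same spirit as Theorem \ref{thm:thm2.4_CarroOrtiz}, but now carrying a general weight $\nu$ on $\Lambda$. Write $p\coloneqq p_{\Phi}(\nu)$. Given $f\in L^{p,1}(\Lambda,d\nu)$, I set $g\coloneqq f\circ\Phi$ on $\R$, let $u_0$ be the Poisson integral of $g$ in $\R_{+}^{2}$, and define the candidate extension $u\coloneqq u_0\circ\Phi^{-1}$ on $\Omega$, which is harmonic by conformality of $\Phi$.

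The standard pointwise bound for the non-tangential maximal function of a Poisson integral, $\mathcal{N} u_0(x)\lesssim\mathcal{M}_{hl}g(x)$, transfers to $\Omega$ because $\Phi$ maps non-tangential cones in $\R_{+}^{2}$ into non-tangential regions in $\Omega$ of some fixed aperture $\alpha_0$, yielding $(\mathcal{M}_{\alpha_0}u)\circ\Phi\lesssim\mathcal{M}_{hl}g$ on $\R$. The identity $\Phi(\nu)=(\nu\circ\Phi)|\Phi'|$ gives the change of variables $\int_\Lambda \varphi\,d\nu=\int_\R (\varphi\circ\Phi)\,\Phi(\nu)\,dx$, which preserves distribution functions and hence Lorentz norms: $\|f\|_{L^{p,1}(\Lambda,\nu)}=\|g\|_{L^{p,1}(\Phi(\nu))}$ and $\|\mathcal{M}_{\alpha_0}u\|_{L^{p,\infty}(\Lambda,\nu)}\lesssim\|\mathcal{M}_{hl}g\|_{L^{p,\infty}(\Phi(\nu))}$. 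The theorem then reduces to the weighted restricted weak-type $(p,p)$ inequality
\begin{equation*}
\|\mathcal{M}_{hl}g\|_{L^{p,\infty}(\Phi(\nu))}\lesssim\|g\|_{L^{p,1}(\Phi(\nu))},
\end{equation*}
which by the Kerman--Torchinsky / Chung--Hunt--Kurtz characterization holds precisely when $\Phi(\nu)\in A_p^{\mathcal{R}}(\R)$, exactly our hypothesis.

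Non-tangential convergence $u\to f$ almost everywhere on $\Lambda$ is then obtained by the usual density-plus-maximal-operator argument: I approximate $f$ in $L^{p,1}(\Lambda,\nu)$ by continuous, compactly supported data (dense since $\nu$ is doubling), for which non-tangential convergence is classical, and absorb the error through the restricted weak-type bound just established. I do not expect any single analytic step to be the real difficulty, since each ingredient is already available in the literature; the main task is careful bookkeeping, namely checking that the pullback by $\Phi$ is an isometry of the correct Lorentz spaces and that the domination of cones works uniformly once the boundary measure is only an $A_\infty$-weight on $\Lambda$ rather than arc-length, so that the $\nu=1$ argument of \cite{CarroOrtiz} extends cleanly to this weighted setting.
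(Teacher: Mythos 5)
Your proposal is correct and follows essentially the same route as the paper: transfer the datum to $\R_{+}^{2}$ via $\Phi$, use the pointwise bound $\mathcal{M}_{\alpha}(P_y\ast g)\lesssim \mathcal{M}_{hl}g$ together with the restricted weak-type $(p,p)$ bound characterized by $\Phi(\nu)\in A_{p_{\Phi}(\nu)}^{\mathcal{R}}$, and pull the estimate back using the fact that $\Phi$ preserves non-tangential cones and that the pullback is an isometry of Lorentz norms with respect to $\Phi(\nu)$. The only cosmetic difference is that the paper gets a.e.\ non-tangential convergence directly from the convergence theorem for Poisson integrals of functions satisfying $\int_{\R}|g(x)|(1+|x|^{2})^{-1}\,dx<\infty$ (a bound which, as you should note, is what also guarantees the Poisson integral is well defined and follows from $g\in L^{p,1}(\Phi(\nu))$ with $\Phi(\nu)\in A_{p}^{\mathcal{R}}$), rather than via your density-plus-maximal-function argument, which also works.
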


The $L^p$-solvability for the Neumann problem in terms of weights when $p\in (1,\infty)$ is given by the following theorem. For the notion of unique solvability for the Neumann problem, see Definition \ref{def:definitionSolvability}.
\begin{theorem}\label{thm:Lp_Solvability}
    Let $\nu\in A_{\infty}(\Lambda)$. If $|\Phi'|^{-p}\Phi(\nu)\in A_{p}(\R)$, then the Neumann problem is uniquely solvable in $X=L^p(\Lambda,d\nu)$ taking $Y=L^p(\Lambda,d\nu)$.
\end{theorem}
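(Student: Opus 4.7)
The plan is to adapt Kenig's conformal mapping technique used for Theorem~\ref{thm:thm4.4_Kenig}, replacing the Poisson representation appropriate to the Dirichlet problem by a single-layer logarithmic potential adapted to the Neumann problem. Given Neumann data $g\in L^p(\Lambda, d\nu)$, I set $\tilde g\coloneqq (g\circ\Phi)|\Phi'|$ on $\R$. A change of variables using $d\Phi(\nu)(t)=(\nu\circ\Phi)(t)|\Phi'(t)|\,dt$ gives
\[
\|g\|_{L^p(\Lambda,d\nu)}^p = \int_\R |g\circ\Phi|^p\,\Phi(\nu)\,dt = \int_\R |\tilde g|^p\,|\Phi'|^{-p}\,\Phi(\nu)\,dt,
\]
so that $\|g\|_{L^p(\Lambda,d\nu)}=\|\tilde g\|_{L^p(\R,\omega)}$ for $\omega\coloneqq|\Phi'|^{-p}\Phi(\nu)$, the weight assumed to lie in $A_p(\R)$.

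First I would solve the Neumann problem in the half-plane with datum $\tilde g$ via the single-layer potential
\[
v(x,y) = \frac{1}{2\pi}\int_\R \tilde g(s)\,\log\bigl((x-s)^2+y^2\bigr)\,ds,
\]
whose boundary gradient is $(H\tilde g,\tilde g)$, with $H$ the Hilbert transform. Then $\mathcal{M}_{\alpha_0}(\nabla v)$ is dominated pointwise by the maximal Poisson integrals of $\tilde g$ and $H\tilde g$, and hence by $M\tilde g+M(H\tilde g)$, with $M$ the Hardy--Littlewood maximal operator. Since $\omega\in A_p(\R)$ by hypothesis, both $M$ and $H$ are bounded on $L^p(\omega)$, so
\[
\|\mathcal{M}_{\alpha_0}(\nabla v)\|_{L^p(\R,\omega)}\lesssim \|\tilde g\|_{L^p(\R,\omega)}.
\]

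Next I pull the solution back, setting $u\coloneqq v\circ\Phi^{-1}$, a harmonic function in $\Omega$. Conformality of $\Phi$ yields the pointwise identities $|\nabla u|\circ\Phi=|\nabla v|/|\Phi'|$ and $(\partial_\nu u)\circ\Phi=\pm\,\tilde g/|\Phi'|=\pm\,g\circ\Phi$, so $u$ realises the prescribed Neumann data up to a fixed sign. The non-tangential maximal functions are compared using the fact that $|\Phi'|$ is essentially constant on non-tangential cones (Koebe-type distortion), which gives $\mathcal{M}_{\alpha_0}(\nabla u)\circ\Phi(\cdot,0)\sim \mathcal{M}_{\alpha_0}(\nabla v)/|\Phi'|$; combining this with the change-of-variables identity above yields the required estimate $\|\mathcal{M}_{\alpha_0}(\nabla u)\|_{L^p(\Lambda,d\nu)}\lesssim \|g\|_{L^p(\Lambda,d\nu)}$.

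For unique solvability (which in the Neumann setting is understood up to additive constants), suppose $u_1,u_2$ are two solutions with identical Neumann data; then $h\coloneqq u_1-u_2$ is harmonic on $\Omega$, has vanishing non-tangential normal derivative, and $\mathcal{M}_{\alpha_0}(\nabla h)\in L^p(\Lambda,d\nu)$. Passing to $\R_+^2$, the holomorphic derivative of $h\circ\Phi$ has purely real boundary trace, so Schwarz reflection extends it to an entire holomorphic function of controlled growth, forcing it to be constant by a Liouville-type argument, and hence $h$ is constant too. The main obstacle in the argument is the transfer of the non-tangential maximal function through the conformal map, where both the distortion estimates for $|\Phi'|$ and the compatibility between non-tangential cones in the two domains must be verified carefully; it is precisely the presence of the extra factor $|\Phi'|^{-p}$ (absent in the Dirichlet setting, where only $\Phi(\nu)\in A_p(\R)$ appears) that accounts for the possibly empty range of solvability alluded to in the introduction.
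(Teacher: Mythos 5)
Your overall strategy coincides with the paper's: transfer the data to $\R_+^2$ via $\mathcal{T}_N(g)=(g\circ\Phi)|\Phi'|$, observe $\|g\|_{L^p(\Lambda,\nu)}=\|\mathcal{T}_N(g)\|_{L^p(\R,|\Phi'|^{-p}\Phi(\nu))}$, solve in the half-plane with a logarithmic (Neumann) potential, and pull back by $\Phi^{-1}$. The half-plane estimate is fine (the paper uses sparse domination from \cite{CNO}, your route via $M\tilde g+M(H\tilde g)$ is a standard alternative once the identity $Q_y\ast\tilde g=P_y\ast H\tilde g$ is justified by density). The genuine gap is in the transfer of the non-tangential maximal estimate back to $\Omega$. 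You claim that Koebe-type distortion gives the \emph{pointwise} comparison $\mathcal{M}_{\alpha_0}(\nabla u)\circ\Phi\sim\mathcal{M}_{\alpha_0}(\nabla v)/|\Phi'|$, on the grounds that $|\Phi'|$ is essentially constant on non-tangential cones. This is false: Koebe distortion gives comparability of $|\Phi'|$ only on hyperbolic balls of bounded radius (Whitney regions), whereas the cone $\Gamma_\alpha(x)$ has infinite hyperbolic diameter, so $\sup_{z\in\Gamma_\alpha(x)}|F(z)|/|\Phi'(z)|$ need not be comparable to $\bigl(\sup_{z\in\Gamma_\alpha(x)}|F(z)|\bigr)/|\Phi'(x)|$. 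A concrete counterexample is the cone domain of Section \ref{section:examplesDirichlet}, where $|\Phi'(x)|\simeq|x|^{\alpha-1}$ blows up or vanishes as $x\to0$ while $|\Phi'(z)|\simeq1$ at points of the cone at unit height.

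The paper circumvents exactly this obstruction by passing to the conjugate gradients $F=\partial_1u-i\partial_2u$ and $G=\partial_1v-i\partial_2v$, which are \emph{analytic} and satisfy $G\circ\Phi=F/\Phi'$, and then invoking the weighted Hardy space equivalence
\[
\|F\|_{H^p(\R_+^2,|\Phi'|^{-p}\Phi(\nu))}\simeq\Bigl\|F\cdot\tfrac{1}{\Phi'}\Bigr\|_{H^p(\R_+^2,\Phi(\nu))}
\]
of Theorem \ref{thm:analogoLema4.1CNO} (from \cite{AE_FBY}), followed by Theorem \ref{thm2.8_Kenig}. This is a norm-level statement whose proof exploits analyticity (the same difficulty is what forces the logarithm/Jensen argument in the proof of Theorem \ref{thm:Lp1_Solvability}); it cannot be replaced by a pointwise distortion estimate. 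Your uniqueness sketch is also rougher than what is needed---Schwarz reflection requires more than a.e.\ non-tangential vanishing of the imaginary part of an $H^p$ function---and the paper instead cites Theorem 2.5 of \cite{transmissionCarro}. To repair your argument, replace the distortion step by the Hardy-space equivalence above (or prove an equivalent multiplier statement for $1/\Phi'$ on weighted $H^p$).
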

In view of this theorem, given $\nu\in A_{\infty}(\Lambda)$, we  define the range of $L^p$-solvabilty
\[
R(\nu)
\coloneqq 
\{ 
p\in (1,\infty): |\Phi'|^{-p}\Phi(\nu)\in A_p(\R)
\}.
\]
When the weight is $\nu=1$, we recover Theorem 1.4 in \cite{CNO}. Indeed, in this case, 
\[
R(ds)
=
\{p\in (1,\infty): |\Phi'|^{1-p}\in A_p(\R)\}
=
\{p\in (1,\infty): |\Phi'|\in A_{p'}(\R)\}, 
\]
and hence, by the nesting properties of the $A_p$-classes,  $R(ds)$ is an interval of the form $(1,p_{\Phi}')$, where $p_\Phi=p_\Phi(ds)$ is as in \eqref{pfi}. We observe that, since $|\Phi'|\in A_2(\R)$ \cite{calderonCauchyIntegral, kenigWeighted}, $p_{\Phi}<2$ and therefore $p_{\Phi}'>2$. This recovers the classical result that the Neumann problem in a Lipschitz domain is solvable in $(1,2+\varepsilon)$ for some $\varepsilon>0$ depending on the domain \cite{DahlbergKenig}, giving an explicit formula for such $\varepsilon$. 

For a general weight $\nu\in A_{\infty}(\Lambda)$, the range of solvability $R(\nu)$ may be empty, as we will see in examples in Section \ref{section:examplesDirichlet}, and, when it is not empty, it is an open interval (see Theorem \ref{lemma:lemma2RangeSolvability}). Therefore, we can consider
\[
p_{+}\coloneqq \sup R(\nu)\quad \text{ and }\quad p_{-}\coloneqq \inf R(\nu).
\]
One may wonder whether the Neumann problem is solvable at $p_{\pm}$ for some space smaller than $L^p(\Lambda,\nu)$. For the case $p_-=1$,  we obtain the following Hardy space result.

\begin{theorem}\label{thm:H1solvability}
    Let $\nu\in A_{\infty}(\Lambda)$. If $(\nu\circ\Phi)\in A_1$, then the Neumann problem is uniquely solvable in $X=H_{at}^1(\Lambda,\nu)$ taking $Y=L^1(\Lambda,\nu)$.
\end{theorem}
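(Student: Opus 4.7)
The plan is to follow the conformal mapping technique that underlies the other results of the paper. Given $f\in H^{1}_{at}(\Lambda,\nu)$, I set $g(t)\coloneqq f(\Phi(t))|\Phi'(t)|$ on $\R$. If $u$ is a solution in $\Omega$, then $\tilde u \coloneqq u\circ\Phi$ is harmonic in $\R_{+}^{2}$ and a direct computation using that $\Phi$ is conformal translates the Neumann condition $\partial u/\partial n = f$ on $\Lambda$ into $\partial_{y}\tilde u(t,0)=g(t)$ on $\R$. The Neumann problem in the upper half-plane is solved explicitly: $\partial_{y}\tilde u = P_{y}*g$ and $\partial_{x}\tilde u = -Q_{y}*g$, where $P_{y}$ and $Q_{y}$ are the Poisson and conjugate Poisson kernels, which yields
\[
\mathcal{M}_{\alpha_{0}}(\nabla \tilde u)\lesssim M(g) + M(Hg),
\]
with $M$ the Hardy--Littlewood maximal operator and $H$ the Hilbert transform.

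Transferring norms via $z=\Phi(t)$, and using that $|\nabla u|\circ\Phi \approx |\nabla\tilde u|/|\Phi'|$ together with $d\nu=(\nu\circ\Phi)|\Phi'|\,dt$, the factors of $|\Phi'|$ cancel and the problem reduces to showing
\[
\int_{\R}\bigl(M(g)+M(Hg)\bigr)(\nu\circ\Phi)\,dt \lesssim \|f\|_{H^{1}_{at}(\Lambda,\nu)}.
\]
Since $\nu\circ\Phi\in A_{1}(\R)$ by hypothesis, weighted Calder\'on--Zygmund theory gives that $H$ is bounded on $H^{1}(\R,\nu\circ\Phi)$ and that $M$ maps $H^{1}(\R,\nu\circ\Phi)$ into $L^{1}(\R,\nu\circ\Phi)$, so the display above follows once we establish the atomic transfer estimate
\[
\|g\|_{H^{1}(\R,\nu\circ\Phi)}\lesssim \|f\|_{H^{1}_{at}(\Lambda,\nu)}.
\]

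This last inequality is the heart of the matter. By linearity it suffices to take $f = a$ an atom for $H^{1}_{at}(\Lambda,\nu)$ supported on an arc $I\subset\Lambda$, with $\|a\|_{\infty}\lesssim \nu(I)^{-1}$ and $\int_{I}a\,ds = 0$. The change of variables shows immediately that $g=(a\circ\Phi)|\Phi'|$ is supported in $J=\Phi^{-1}(I)$ and satisfies $\int_{J}g\,dt=\int_{I}a\,ds=0$. For the size, a Koebe-type distortion estimate -- standard for conformal maps onto Lipschitz graph domains -- yields that $|\Phi'|$ is essentially constant on $J$; combined with the identity $\nu(I)=\int_{J}(\nu\circ\Phi)|\Phi'|\,dt \approx |\Phi'|_{J}\,(\nu\circ\Phi)(J)$, this gives $\|g\|_{\infty}\lesssim (\nu\circ\Phi)(J)^{-1}$, so $g$ is, up to a uniform constant, a $(\nu\circ\Phi)$-atom in $H^{1}(\R,\nu\circ\Phi)$. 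Uniqueness follows from the uniqueness of the half-plane Neumann problem modulo additive constants, pulled back through $\Phi$. The main obstacle will be making the atomic transfer fully rigorous: one must match carefully the precise size and cancellation conventions in the definition of $H^{1}_{at}(\Lambda,\nu)$, and justify the essential constancy of $|\Phi'|$ over the arcs supporting atoms by combining the reverse H\"older and Koebe distortion properties available in the Lipschitz graph setting.
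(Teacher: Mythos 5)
Your overall strategy (transfer via $\Phi$, solve in $\R_{+}^{2}$ with the Neumann integral, analyze the transferred datum atomically) is the paper's, but the step you correctly identify as ``the heart of the matter'' contains a genuine error. You claim that a Koebe-type distortion estimate makes $|\Phi'|$ essentially constant on $J=\Phi^{-1}(I)$, so that $g=(a\circ\Phi)|\Phi'|$ is, up to a uniform constant, a $(\nu\circ\Phi)$-atom. This is false: Koebe distortion controls $|\Phi'|$ on Whitney regions in the interior of $\R_{+}^{2}$, while on boundary intervals $|\Phi'|$ is merely an $A_2(\R)$ weight and can oscillate without bound. For the cone of aperture $\alpha\pi$ one has $|\Phi'(x)|\simeq|x|^{\alpha-1}$, which is not even locally bounded near the vertex when $\alpha<1$; for an atom $a$ supported on an arc containing the vertex, $(a\circ\Phi)|\Phi'|\notin L^{\infty}$, so it cannot be a multiple of an $L^\infty$-normalized atom. (A smaller mismatch: the paper's $(\Lambda,\nu)$-atoms carry the cancellation $\int_{\Lambda}a\,\nu\,ds=0$, not $\int a\,ds=0$, so the mean-zero property you transfer is taken against the wrong measure.) The paper circumvents all of this: Lemma \ref{lemma:atomEquivalence1} shows that $a\circ\Phi$ (with \emph{no} $|\Phi'|$ factor) is exactly a $(\R,\Phi(\nu))$-atom, because the $L^\infty$ normalization and the measure of the supporting interval transform identically under $\Phi$; then the Coifman--Weiss/Garc\'ia-Cuerva isomorphism $H_{at}^1(\R,w)\cong w^{-1}H_{at}^1(\R,dx)$ (Theorem \ref{thm:equivalenceWeightedH1}), applied twice, converts membership of $f\circ\Phi$ in $H_{at}^1(\R,\Phi(\nu))$ into membership of $(f\circ\Phi)|\Phi'|$ in $H_{at}^1(\R,\nu\circ\Phi)$ with equivalent norms. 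The transferred function need not itself be an atom; only its atomic decomposition norm is controlled, and that is precisely what the abstract isomorphism provides and what your pointwise argument cannot.

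Two further steps need repair. First, your assertion that $\mathcal{M}_{hl}$ maps $H^{1}(\R,\nu\circ\Phi)$ into $L^{1}(\R,\nu\circ\Phi)$ is false for the Hardy--Littlewood maximal operator, which destroys cancellation: $\mathcal{M}_{hl}g(x)\gtrsim 1/|x|$ at infinity whenever $g\not\equiv 0$, and this need not be integrable against an $A_1$ weight (take $w\equiv 1$). The paper instead proves the non-tangential maximal estimate by verifying it uniformly on atoms (via $Q_y\ast a=P_y\ast\mathcal{H}a$ and Theorem II.3.4 of \cite{GarciaCuerva}) and combining with the weak $(1,1)$ bound coming from sparse domination (Lemma \ref{thm:sparseDominationNonTangential}). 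Second, ``the factors of $|\Phi'|$ cancel'' when passing the maximal estimate back to $\Omega$ is not a pointwise cancellation: $\mathcal{M}_{\alpha}(F/\Phi')$ is not comparable pointwise to $\mathcal{M}_{\alpha}(F)/|\Phi'|$, and one needs the Hardy-space equivalence $F\in H^{1}(\R_{+}^{2},|\Phi'|^{-1}\Phi(\nu))\iff F/\Phi'\in H^{1}(\R_{+}^{2},\Phi(\nu))$ of Theorem \ref{thm:analogoLema4.1CNO} to carry it out.
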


Observe that $\nu\circ\Phi\in A_1$ is the case $p=1$ of the condition imposed in Theorem \ref{thm:Lp_Solvability}. Also notice that, if $d\nu=ds$ is arc-length measure,  $\nu= 1\in A_1$, so we have the following direct consequence.

\begin{corollary}
The Neumann problem is uniquely solvable in $X=H_{at}^{1}(\Lambda,ds)$ with $Y=L^1(\Lambda,ds)$ for any graph Lipschitz domain in the plane $\Omega$. 
\end{corollary}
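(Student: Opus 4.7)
The corollary is a one-line specialization of Theorem \ref{thm:H1solvability}, so my plan is simply to verify that both hypotheses of that theorem are met when $d\nu = ds$. This corresponds to taking the weight $\nu \equiv 1$ on $\Lambda$. First, I would note that any positive constant is an $A_1$ weight (its Hardy--Littlewood maximal function equals itself), and in particular $1 \in A_{\infty}(\Lambda)$, so the ambient hypothesis on $\nu$ is met.

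Next, I would compute $\nu \circ \Phi$. Since $\nu$ is identically $1$ on $\Lambda$, the composition $\nu \circ \Phi$ is identically $1$ on $\mathbb{R}$, which again satisfies the $A_1(\mathbb{R})$ condition trivially. With both hypotheses verified, Theorem \ref{thm:H1solvability} applies directly and yields unique solvability of the Neumann problem in $X = H_{at}^{1}(\Lambda, ds)$ with $Y = L^{1}(\Lambda, ds)$ for any graph Lipschitz domain $\Omega$.

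There is no substantive obstacle: the whole content is absorbed in Theorem \ref{thm:H1solvability}. The point worth emphasizing in the write-up is the contrast with the $L^p$-Neumann statement in Theorem \ref{thm:Lp_Solvability}, where one needs $|\Phi'|^{1-p} \in A_p(\mathbb{R})$ and where the range $R(ds) = (1, p_{\Phi}')$ can approach $1$ from above for badly behaved Lipschitz graphs; by contrast, the Hardy-space endpoint is automatically available here because the $A_1$ hypothesis at $p=1$ concerns $\nu\circ\Phi$ itself, not the Jacobian $|\Phi'|$, and this trivializes when $\nu \equiv 1$.
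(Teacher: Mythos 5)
Your proposal is correct and coincides with the paper's own argument: the corollary is stated there as a ``direct consequence'' of Theorem \ref{thm:H1solvability}, precisely because $\nu\equiv 1$ gives $\nu\circ\Phi\equiv 1\in A_1(\R)$. Your closing remark is also consistent with the paper's observation that $\nu\circ\Phi\in A_1$ is exactly the $p=1$ case of the condition $|\Phi'|^{-p}\Phi(\nu)\in A_p$, since the Jacobian factor cancels.
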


Again, this recovers the classical $H_{at}^{1}$-result in \cite{DahlbergKenig}, but now using the conformal mapping technique. In fact, Dahlberg and Kenig combine this endpoint with the $L^2$-solvability to prove $L^p$-solvability for $p\in (1,2)$, using interpolation. However, we follow the approach in \cite{CNO}, and therefore we do not need $H_{at}^1$-solvability to prove the $L^p$-result.

For the case $p_{\pm}\in (1,\infty)$, we have the following Lorentz-space result, whose hypothesis will be explained afterwards:

\begin{theorem}[Solvability in $L^{p_{\pm},1}(\Lambda,\nu)$]\label{thm:Lp1_Solvability}
    Let $\nu\in A_{\infty}(\Lambda)$ be such that $R(\nu)=(p_{-},p_{+})\neq\varnothing$.
    \begin{itemize}
        \item If $p_{+}<\infty$ and $(|\Phi'|,\Phi(\nu))\in \mathcal{S}_{p_{+}}^{\mathcal{R}}$, then the Neumann problem is solvable in $X=L^{p_{+},1}(\Lambda,\nu)$ with $Y=L^{p_{+},\infty}(\Lambda,\nu)$.
        \item If $p_{-}>1$, $(|\Phi'|,\Phi(\nu))\in \mathcal{S}_{p_{-}}^{\mathcal{R}}$ and \eqref{eq:conditionForWellDefined} holds, then the Neumann problem is solvable in $X=L^{p_{-},1}(\Lambda,d\nu)$ with $Y=L^{p_{-},\infty}(\Lambda, \nu)$. 
    \end{itemize}
\end{theorem}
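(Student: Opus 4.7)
The plan is to apply the conformal mapping technique of Theorem \ref{thm:Lp_Solvability}, now combined with a restricted weak-type argument in the spirit of \cite{CarroOrtiz}. Starting from boundary data $g$ on $\Lambda$, I would pull the Neumann problem back to $\R^{2}_{+}$ via $\Phi$: conformality of $\Phi$ turns the Neumann datum on $\Lambda$ into $(g\circ\Phi)\,|\Phi'|$ on $\R$, and an explicit harmonic function $\tilde{u}$ with this datum can be constructed using a conjugate Poisson integral. Setting $u=\tilde{u}\circ\Phi^{-1}$ produces a candidate solution on $\Omega$, and the non-tangential maximal function of $\nabla u$ on $\Lambda$ corresponds, up to bounded factors coming from $|\Phi'|$, to the corresponding object on $\R$.

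Pointwise, this non-tangential maximal function is controlled by $M\bigl((g\circ\Phi)|\Phi'|\bigr)$ and $\bigl|H\bigl((g\circ\Phi)|\Phi'|\bigr)\bigr|$, where $M$ and $H$ are the Hardy--Littlewood maximal operator and the Hilbert transform. Using the change of variables $\|g\|_{L^{p,q}(\Lambda,\nu)}\asymp\|g\circ\Phi\|_{L^{p,q}(\R,\Phi(\nu))}$, the theorem reduces to showing the two-weight restricted weak-type inequalities
\[
\bigl\| M(f|\Phi'|) \bigr\|_{L^{p_{\pm},\infty}(\R,\Phi(\nu))} + \bigl\| H(f|\Phi'|) \bigr\|_{L^{p_{\pm},\infty}(\R,\Phi(\nu))} \lesssim \|f\|_{L^{p_{\pm},1}(\R,\Phi(\nu))}
\]
for $f=g\circ\Phi$. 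This is exactly the kind of estimate the Sawyer-type testing condition $(|\Phi'|,\Phi(\nu))\in\mathcal{S}_{p_{\pm}}^{\mathcal{R}}$ is tailored to provide at the endpoints of $R(\nu)$, in the same way that $A_{p}^{\mathcal{R}}$ governs the Lorentz endpoint of the Dirichlet problem in Theorem \ref{thm:dirichletEndpointPesos}.

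The argument then runs as follows. First I would test the two-weight bound on characteristic functions $f=\mathbf{1}_{E}$: the Sawyer condition directly yields the required weak-type estimate at the level of $M$, and a Calder\'on--Zygmund decomposition of $f|\Phi'|$ adapted to the weight $\Phi(\nu)$, together with the testing condition applied on good and bad sets, transfers the bound to $H$. A standard layer-cake decomposition of a general $f\in L^{p_{\pm},1}(\R,\Phi(\nu))$ upgrades the restricted bound to the full Lorentz estimate. For the upper endpoint $p_{+}$ this already produces the desired solution on $\Omega$. For $p_{-}$ an additional subtlety arises, because reconstructing $\tilde{u}$ from a primitive of the conjugate function requires control of the datum at infinity; the hypothesis \eqref{eq:conditionForWellDefined} is exactly what ensures that the candidate $u$ is well-defined on $\Omega$ and that its non-tangential maximal gradient still belongs to $L^{p_{-},\infty}(\Lambda,\nu)$.

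The main technical obstacle I anticipate is the two-weight restricted weak-type bound for the Hilbert transform: unlike $M$, the operator $H$ cannot be handled by a direct covering argument, so the Calder\'on--Zygmund decomposition must be carried out carefully against the measure $\Phi(\nu)$, and the Sawyer testing applied at precisely the right truncation level, in order not to destroy the endpoint behavior at $p_{\pm}$. A second, smaller difficulty is checking that the change of variables induced by $\Phi$ preserves the Lorentz quasi-norms with constants uniform in the data, which should follow from $\Phi(\nu)\in A_{\infty}(\R)$ and the quasi-norm equivalence between $L^{p,q}(\Lambda,\nu)$ and the corresponding weighted Lorentz space on $\R$.
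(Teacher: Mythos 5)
There are two genuine gaps here, both at the points where your outline waves its hands. First, the reduction to a half-plane estimate is mis-stated. Since $\nabla v\circ\Phi$ corresponds to $\nabla u/\Phi'$ (via $G\circ\Phi=F/\Phi'$), measuring $\mathcal{M}_\alpha(\nabla v)$ in $L^{p_\pm,\infty}(\Lambda,\nu)$ amounts to measuring $\mathcal{M}_\alpha(F/\Phi')$ in $L^{p_\pm,\infty}(\R,\Phi(\nu))$, and the estimate the hypothesis supplies is for $\mathcal{A}_{\mathcal{S}}(f|\Phi'|)/|\Phi'|$ --- note the division by $|\Phi'|$. Your displayed inequality drops that division, and the justification ``up to bounded factors coming from $|\Phi'|$'' is false: $|\Phi'|$ is merely an $A_2$ weight, not bounded above or below. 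Relatedly, your plan to establish a two-weight restricted weak-type bound for the Hilbert transform by a Calder\'on--Zygmund decomposition plus ``testing on characteristic functions'' misreads the hypothesis: $\mathcal{S}_{p}^{\mathcal{R}}$ is not a testing condition but is \emph{defined} as the full $L^{p,1}(\Phi(\nu))\to L^{p,\infty}(\Phi(\nu))$ bound for $f\mapsto \mathcal{A}_{\mathcal{S}}(f|\Phi'|)/|\Phi'|$ over all sparse families; combined with the sparse domination $\mathcal{M}_\alpha(\nabla u_{f,N})\lesssim \mathcal{A}_{\mathcal{S}}f$ (Lemma \ref{thm:sparseDominationNonTangential}), the half-plane estimate is immediate and no CZ argument for $H$ is needed (nor would one succeed in deriving the sparse bound from a testing condition --- the paper notes that characterizing $\mathcal{S}_p^{\mathcal{R}}$ is open).

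Second, and more seriously, you have no argument for transferring the weak-type estimate back to $\Omega$, which is the genuinely delicate step at the endpoints. One must prove
\[
\Bigl\|\mathcal{M}_{\alpha}\Bigl(\tfrac{F}{\Phi'}\Bigr)\Bigr\|_{L^{\bm{p},\infty}(\R,\Phi(\nu))}
\lesssim
\Bigl\|\tfrac{\mathcal{M}_{\alpha}(F)}{\Phi'}\Bigr\|_{L^{\bm{p},\infty}(\R,\Phi(\nu))},
\]
i.e.\ inequality \eqref{eq:neededInequalityEndpoint}, and the two sides involve a non-tangential supremum of a quotient versus a quotient of boundary functions; they do not coincide ``up to bounded factors.'' In the $L^p$ interior range this is Theorem \ref{thm:analogoLema4.1CNO}, but at $p_\pm$ the weight $|\Phi'|^{-\bm p}\Phi(\nu)$ leaves $A_{\bm p}$ and one cannot invoke it in the weak-type scale. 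The paper's proof gets around this by approximating the datum by compactly supported functions $f_m$, applying the subharmonicity estimate $\log|F_m|\le P_{y_2}\ast\log|h_m|$ (Lemma \ref{thm:teoremaGarnettAdaptado}) together with $\log|\Phi'(y_1,y_2)|=P_{y_2}\ast\log|\Phi'|(y_1)$, raising to a small power $a$, applying Jensen to get $|F_m/\Phi'|\le (P_{y_2}\ast|h_m/\Phi'|^a)^{1/a}$, and then using $\mathcal{M}_{hl}$ on $L^{\bm p/a,\infty}(\Phi(\nu))$ for $a$ small enough that $\Phi(\nu)\in A_{\bm p/a}$, followed by a limiting argument. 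Without some substitute for this mechanism your proof does not close. (Your final worry about the change of variables is, by contrast, a non-issue: $\Phi(\nu)(E)=\nu(\Phi(E))$ by definition, so distribution functions and hence all Lorentz quasi-norms are preserved exactly.)
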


Condition $\mathcal{S}_{p}^{\mathcal{R}}$ on the pair of weights $(|\Phi'|,\Phi(\nu))$ means that the operator 
\[
f\mapsto \frac{\mathcal{A}_{\mathcal{S}}(f|\Phi'|)}{|\Phi'|}
\]
is bounded from $L^{p,1}(\Phi(\nu))\to L^{p,\infty}(\Phi(\nu))$, where $\mathcal{A}_{\mathcal{S}}$ denotes a sparse operator. In view of Remark \ref{remark:SpFuerte}, this condition is an endpoint case of the one given in Theorem \ref{thm:Lp_Solvability}.

This kind of weighted estimates are usually known as Sawyer-type inequalities or mixed weak-type inequalities \cite{SawyerCUMP, LiOmbrosiPerez, perezRoure}, and they have recently appeared in a similar form under the name of multiplier inequalities \cite{CruzUribeSweeting}. Here, they are directly motivated by a boundary value problem with a measure on the boundary, showing that applying the conformal mapping strategy to B.V.P.'s gives rise to interesting problems in weighted theory.

Finally, the $L^p$ and $L^{p,1}$ results for the Neumann problem can be adapted to the Regularity problem.

\begin{theorem}\label{thm:regularitySolvability}
    Let $\nu\in A_{\infty}(\Lambda)$. The Regularity problem is:
    \begin{itemize}
        \item Uniquely solvable in $X=L^p(\Lambda,\nu)$ taking $Y=L^p(\Lambda,\nu)$ if $|\Phi'|^{-p}\Phi(\nu)\in A_p$.
        \item Solvable in $X=L^{p_{+},1}(\Lambda,\nu)$ with $Y=L^{p_{+},\infty}(\Lambda,\nu)$ if $R(\nu)=(p_{-},p_{+})\neq\varnothing$ with $p_{+}<\infty$, $(|\Phi'|,\Phi(\nu))\in \mathcal{S}_{p_{+}}^{\mathcal{R}}$. The same holds for $p_{-}$ with the same hypothesis if $p_{-}>1$ and, besides, \eqref{eq:conditionForWellDefined} holds.
    \end{itemize}
\end{theorem}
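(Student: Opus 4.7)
The plan is to reduce the Regularity problem to the corresponding Neumann problem (Theorems \ref{thm:Lp_Solvability} and \ref{thm:Lp1_Solvability}) by harmonic conjugation. Given boundary data $f$ for the Regularity problem, I would set $g \coloneqq \partial_s f$. Since the regularity datum lies in $L^p(\Lambda,\nu)$ (resp.\ $L^{p,1}(\Lambda,\nu)$) precisely when $g$ does, and since the hypotheses of Theorem \ref{thm:regularitySolvability} are verbatim those of Theorem \ref{thm:Lp_Solvability} (resp.\ Theorem \ref{thm:Lp1_Solvability}), I can apply the corresponding Neumann theorem to $g$, obtaining a harmonic function $v$ on $\Omega$ with $\partial_n v = g$ non-tangentially on $\Lambda$ and $\mathcal{M}_{\alpha_0}(\nabla v)$ in the target space $Y$.

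Next, I would exploit the simple connectedness of $\Omega$ (being a conformal image of $\R_+^2$) to find $u$ harmonic on $\Omega$ such that $u+iv$ is holomorphic. The Cauchy--Riemann equations give $|\nabla u| = |\nabla v|$ pointwise in $\Omega$, so $\mathcal{M}_{\alpha_0}(\nabla u) = \mathcal{M}_{\alpha_0}(\nabla v) \in Y$, and they also transform the normal derivative of $v$ on $\Lambda$ into the tangential derivative of $u$, yielding $\partial_s u = \partial_n v = g = \partial_s f$ non-tangentially. Integrating along $\Lambda$ gives $u = f + c$ for some constant $c$, so $u - c$ is the desired Regularity solution. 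For uniqueness in the $L^p$ statement, if $u$ is a Regularity solution with zero datum and $\mathcal M_{\alpha_0}(\nabla u)\in L^p(\Lambda,\nu)$, its harmonic conjugate yields a Neumann solution with zero datum, forced to be constant by the uniqueness clause of Theorem \ref{thm:Lp_Solvability}; this makes $\nabla u \equiv 0$, and combined with $u = 0$ on $\Lambda$ gives $u \equiv 0$.

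The step I expect to be the main obstacle is justifying rigorously, in the present weighted framework, that the harmonic conjugate $u$ actually has non-tangential limits on $\Lambda$ and that the identification $\partial_s u = \partial_n v$ holds in the non-tangential almost-everywhere sense with respect to $\nu$. In the upper half-plane this reduces to non-tangential convergence of the Hilbert transform of a suitable Poisson integral, which requires weighted bounds for the Hilbert transform on $L^p(\R,|\Phi'|^{-p}\Phi(\nu))$ in the $L^p$ case and on the appropriate Lorentz space under the Sawyer-type hypothesis $(|\Phi'|,\Phi(\nu))\in \mathcal{S}_p^{\mathcal{R}}$ in the Lorentz case. These bounds are already available under the hypotheses of the theorem, as they are essentially the ingredients used in the proofs of Theorems \ref{thm:Lp_Solvability} and \ref{thm:Lp1_Solvability}; once this is set, transferring the estimates back to $\Omega$ via $\Phi$ is routine and absorbs the constant $c$ without affecting the non-tangential maximal estimate.
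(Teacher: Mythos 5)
Your reduction to the Neumann problem via harmonic conjugation is the classical duality and could in principle be completed, but it is not the route the paper takes, and the step you yourself flag as ``the main obstacle'' is a genuine gap rather than a technicality disposed of by Hilbert-transform bounds. The paper does not solve the Neumann problem first and then conjugate: it writes the candidate solution directly as $v=u_{f\circ\Phi,D}\circ\Phi^{-1}$ with $u_{f\circ\Phi,D}=P_y\ast(f\circ\Phi)$, so that non-tangential convergence to the Dirichlet datum is immediate from Theorem \ref{thm:Stein_thm1Page197} (this is exactly why the hypothesis $f\in\widetilde{L}^1(\Lambda)$ appears in Definition \ref{def:definitionSolvabilityRegularity}). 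The entire content is then the maximal estimate, obtained from the pointwise identity $|\nabla u_{f,D}|=|\nabla u_{f',N}|$ (Corollary \ref{corol:relacionNeumannRegularidad}) together with the chain rule $(f\circ\Phi)'=(f'\circ\Phi)\,\Phi'$ (Lemma \ref{lemma:derivadaDatoDirichlet}, which is where the $BV_c$ test-function machinery is needed), so that $|\mathcal{T}_D(f)'|=|\mathcal{T}_N(f')|$ and every estimate already proved for the Neumann integral applies verbatim; the transfer to $\Omega$ then repeats the conjugate-gradient/Hardy-space argument of Theorem \ref{thm:Lp_Solvability} and the logarithm/approximation argument of Theorem \ref{thm:Lp1_Solvability}. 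The two constructions in fact yield the same function: the harmonic conjugate of $u_{f',N}$ is, up to an additive constant, $P_y\ast(f\circ\Phi)$.

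The gap in your version is the identification of the boundary values of the conjugate. After producing the Neumann solution $v$ with $\partial_{\mathbf{n}}v=\partial_s f$ and its conjugate $u$ with $|\nabla u|=|\nabla v|$, you must show that $u$ has a non-tangential limit $ds$-a.e., that this limit is locally absolutely continuous along $\Lambda$ with weak tangential derivative equal to the non-tangential limit of $\partial_s u$, and only then may you ``integrate along $\Lambda$'' to get $u=f+c$. Knowing $\partial_s u\to\partial_s f$ non-tangentially a.e.\ does not by itself give $u\to f+c$ a.e.; weighted bounds for the Hilbert transform control the maximal function of $\nabla u$ but say nothing about this recovery of boundary values. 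In the unweighted Lipschitz setting this recovery lemma is classical, but in the weighted $L^p(\Lambda,\nu)$ case, and a fortiori in the Lorentz endpoint case governed by $\mathcal{S}_{p_{\pm}}^{\mathcal{R}}$, it would have to be proved from scratch; the paper's direct Poisson-integral construction is designed precisely to sidestep it. Your uniqueness argument has the same issue in reverse: to feed the conjugate of a null solution into the uniqueness clause of Theorem \ref{thm:Lp_Solvability}, you need to know that $u\to 0$ non-tangentially forces $\nabla v\cdot\mathbf{n}\to 0$ non-tangentially, which again is not automatic.
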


\section{Notation and preliminaries}\label{sec:notation}

\subsection{Graph Lipschitz Domains}

Let $\Lambda$ be a curve in the complex plane given parametrically by $\eta(x)=x+i\gamma(x)$ for $x \in \mathbb{R}$, where $\gamma$ is a real-valued Lipschitz function with constant $L$, and consider the Lipschitz domain
\begin{equation}\label{eq:omega}
\Omega
=
\left\{
z_1+i z_2 \in \mathbb{C}: z_2>\gamma(z_1)
\right\}.    
\end{equation}
Note that $\partial \Omega=\Lambda$.

By Rademacher's theorem, a Lipschitz function is differentiable almost everywhere. Therefore, it makes sense to talk about the tangent vector $\eta'(x)$ to $\Lambda$ at a point $\xi=\eta(x)$ $dx$-a.e., where $dx$ denotes Lebesgue measure. Hence, we can define the outer normal vector $\mathbf{n}(\xi)$ at almost every $\xi\in \Lambda$ with respect to arc-length measure. We define arc-length measure on $\Lambda$ as
\[
ds(E)
\coloneqq 
\int_{\eta^{-1}(E)}|\eta'(t)|\, dt
=
\int_{\eta^{-1}(E)}\sqrt{1+|\gamma'(t)|^2}\, dt,\quad E\subset\Lambda.
\]

To approach the boundary from inside the graph Lipschitz domain, the following non-tangential cones are introduced:

\begin{definition}
Given $0<\alpha<\arctan (\frac{1}{L})$, we define the non-tangential cone $\Gamma_{\alpha}(\xi)$ as
\[
\Gamma_\alpha(\xi)=\left\{z_1+i z_2 \in \mathbb{C}: z_2>\operatorname{Im}(\xi) \text { and }\left|\operatorname{Re}(\xi)-z_1\right|<\tan (\alpha)\left|z_2-\operatorname{Im}(\xi)\right|\right\}.
\]     
\end{definition}

\begin{definition}
A function $v$ defined on $\Omega$ converges non-tangentially to a function $g$ defined on $\Lambda$ if there exists $0<\alpha<\arctan(\frac{1}{L})$ such that 
\[
\lim _{\substack{z \in \Gamma_\alpha(\xi)\\ z \rightarrow \xi}} v(z)
=
g(\xi)
\quad ds\text{-a.e. } \xi \in \Lambda.
\]    
\end{definition}

In the sequel, we use $z \triangleright \xi$ to denote that $z \rightarrow \xi$ with $z \in \Gamma_\alpha(\xi)$ for some $0<\alpha<\arctan (\frac{1}{L})$.

\begin{definition}
Given $0<\alpha<\arctan (\frac{1}{L})$, define the non-tangential maximal operator $\mathcal{M}_\alpha$ as
\[
\mathcal{M}_\alpha(F)(\xi)=\sup _{z \in \Gamma_\alpha(\xi)}|F(z)|, \quad \xi \in \Lambda,
\]
where $F$ is a complex-valued function defined in $\Omega$.
\end{definition}

\subsection{Conformal Mapping}

Since the region $\Omega$ is simply connected, it is conformally equivalent to the upper half-plane
\[
\R_{+}^{2}
\coloneqq 
\{
x+iy\in\C: x\in\R, y>0
\}.
\]
Let $z_0=i x_0, x_0>\eta(0)$, and $\Phi: \R_{+}^{2} \rightarrow \Omega$ be the conformal mapping such that $\Phi(\infty)=\infty$, and $\Phi(i)=z_0$. Let $\Phi^{-1}: \Omega \rightarrow \R_{+}^{2}$ be its inverse.

\begin{theorem}[Theorem 1.1 in \cite{kenigWeighted}]\label{thm:thm1.1_Kenig}
With the above notation:
\begin{enumerate}
    \item $\Phi$ extends to $\overline{\R_{+}^{2}}$ as a homeomorphism onto $\overline{\Omega}$.
    \item $\Phi'$ has a non-tangential limit $dx$-a.e. on $\R$, and this limit is different from 0 $dx$-a.e. on $\R$.
    \item $\Phi(x)$, with $x\in\R=\partial\R_{+}^{2}$, is absolutely continuous when restricted to any finite interval, and hence $\Phi'(x)$ exists $dx$-a.e. and is locally integrable. Moreover, $\Phi'(x)=\lim_{z\triangleright x} \Phi'(z)$ $dx$-a.e.
    \item Sets of $dx$-measure 0 on $\R$ correspond by $\Phi\colon \R\to\Lambda$ to sets of $ds$-measure 0 on $\Lambda$, and vice versa.
    \item At every point where $\Phi'(x)$ exists and is different from $0$, it is a vector tangent to the curve $\Lambda$ at the point $\Phi(x)$, and hence $|\arg \Phi'(x)|\leq \arctan{L}$ $dx$-a.e.
\end{enumerate}    
\end{theorem}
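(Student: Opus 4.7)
The plan is to transfer the statement to the classical setting of the unit disk via a M\"obius map and to apply the standard theorems of geometric function theory (Carath\'eodory and Privalov--Fatou), exploiting the Lipschitz graph structure only for the a.e.\ non-vanishing of $\Phi'$ and for item (5).

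For (1), observe that $\Omega$ is a Jordan domain in the Riemann sphere $\widehat{\C}$: its boundary $\Lambda\cup\{\infty\}$ is a Jordan arc, and both $\Omega\cup\{\infty\}$ and $\widehat{\C}\setminus\overline\Omega$ are simply connected with locally connected boundary. Composing $\Phi$ with a M\"obius map sending the unit disk to $\R_{+}^{2}$, Carath\'eodory's extension theorem then gives a homeomorphism $\Phi\colon\overline{\R_{+}^{2}}\to\overline\Omega$. For (2), since $\Omega$ has locally rectifiable boundary, $\Phi'$ belongs locally to the Hardy space $H^{1}(\R_{+}^{2})$, so the Privalov--Fatou theorem yields non-tangential boundary values $dx$-a.e. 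For the non-vanishing, $\Phi$ is univalent so $\Phi'$ never vanishes in $\R_{+}^{2}$ and a single-valued branch of $\log\Phi'$ is holomorphic there; to conclude that its boundary values are finite $dx$-a.e.\ (equivalently, $\Phi'\neq 0$ a.e.\ on $\R$) one needs $\Omega$ to be a Smirnov/Lavrentiev domain, which for graph Lipschitz domains follows from the Calder\'on--Coifman--McIntosh--Meyer boundedness of the Cauchy integral on $\Lambda$, or equivalently from the fact that $|\Phi'|\in A_{\infty}(\R)$.

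For (3), the identity $\Phi(b)-\Phi(a)=\int_{a}^{b}\Phi'(t)\,dt$ on any finite interval $[a,b]\subset\R$ follows by integrating $\Phi'$ along the segment from $a+i\varepsilon$ to $b+i\varepsilon$ (where it equals $\Phi(b+i\varepsilon)-\Phi(a+i\varepsilon)$ by the fundamental theorem) and letting $\varepsilon\to 0^{+}$, using continuity of $\Phi$ from (1) to pass to the limit on the left and the $H^{1}_{\mathrm{loc}}$ bound from (2) together with dominated convergence on the right; this yields local absolute continuity and identifies the classical derivative of $\Phi|_{\R}$ with the non-tangential limit of $\Phi'$ a.e. Item (4) is then immediate: $ds(\Phi(E))=\int_{E}|\Phi'(t)|\,dt$, so by the non-vanishing of $\Phi'$ and its local integrability, $dx$-null sets and $ds$-null sets correspond under $\Phi$. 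For (5), wherever $\Phi'(x)$ exists and is nonzero, differentiating the identity $\Phi(x)=\eta(\operatorname{Re}\Phi(x))$ yields $\Phi'(x)=\eta'(\operatorname{Re}\Phi(x))\cdot(\operatorname{Re}\Phi)'(x)$; orientation-preservation of the conformal map $\Phi$ forces $(\operatorname{Re}\Phi)'(x)>0$, so $\Phi'(x)/|\Phi'(x)|=(1+i\gamma'(\operatorname{Re}\Phi(x)))/\sqrt{1+\gamma'(\operatorname{Re}\Phi(x))^{2}}$ is the unit tangent vector to $\Lambda$ at $\Phi(x)$, and hence $|\arg\Phi'(x)|=|\arctan\gamma'(\operatorname{Re}\Phi(x))|\le\arctan L$.

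The main obstacle is the a.e.\ non-vanishing in (2): for a general Jordan domain with rectifiable boundary, $\Phi'$ can vanish on a set of positive boundary measure (the non-Smirnov Lavrentiev examples), so one genuinely needs the Lipschitz graph hypothesis. The cleanest route is via Calder\'on's $L^{2}$-boundedness of the Cauchy integral on Lipschitz graphs, which yields $|\Phi'|\in A_{2}(\R)$ and hence $\log|\Phi'|\in L^{1}_{\mathrm{loc}}(\R)$, ruling out any set of positive measure on which $\Phi'=0$.
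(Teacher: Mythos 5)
The paper does not prove this statement: it is imported verbatim as Theorem 1.1 of \cite{kenigWeighted}, so there is no internal proof to compare against. Judged on its own, your outline follows the standard geometric-function-theory route (Carath\'eodory on the sphere for (1), $H^1$ membership of $\Phi'$ and Fatou for (2), integration of $\Phi'$ along horizontal lines for (3), the change-of-variables formula for (4), the chain rule plus orientation for (5)), and items (1), (3), (4), (5) are essentially fine as you describe them.

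The problematic piece is your treatment of the a.e.\ non-vanishing in (2). Your claim that ``for a general Jordan domain with rectifiable boundary, $\Phi'$ can vanish on a set of positive boundary measure (the non-Smirnov Lavrentiev examples)'' is false. Once you know $\Phi'\in H^1_{\mathrm{loc}}$ and $\Phi'\not\equiv 0$, the classical Jensen-type inequality for Hardy-space functions --- precisely Lemma \ref{thm:thmII.4.1_Garnett} quoted in this paper --- gives $\log|\Phi'|\in L^1_{\mathrm{loc}}$ on the boundary and hence $\Phi'\neq 0$ $dx$-a.e., with no Smirnov hypothesis. The non-Smirnov pathology concerns whether $\log|\Phi'|$ in the interior is the Poisson integral of its boundary values (i.e.\ whether $\Phi'$ has a singular inner factor); in Lavrentiev's examples the boundary values of $\Phi'$ are still nonzero a.e. Because of this misdiagnosis you reach for Calder\'on's theorem and $|\Phi'|\in A_2(\R)$, which is both unnecessary and dangerously circular: in \cite{kenigWeighted} the statement $|\Phi'|\in A_2$ is Theorem 1.10, proved \emph{after} and \emph{using} Theorem 1.1. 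If you want the argument that actually exploits the Lipschitz structure (and is closer to Kenig's), note instead that $\arg\Phi'$ is a harmonic function bounded by $\arctan L<\pi/2$ throughout $\R^2_+$ (maximum principle, after approximating $\gamma$ by smooth functions); then $\log\Phi'$ has bounded imaginary part, hence lies in $H^p_{\mathrm{loc}}$ for all $p<\infty$, and its finite non-tangential limits a.e.\ deliver the existence and the non-vanishing of the boundary values of $\Phi'$ in one stroke. Either repair is short, but as written the justification of the key point of (2) rests on a false statement and a potentially circular citation.
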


We abuse notation and denote by $\Phi$ the conformal mapping on $\R_{+}^{2}$, the extension to $\overline{\R_{+}^{2}}$ and the restriction to $\R\equiv \partial\R_{+}^{2}$. We will use this conformal mapping and its inverse $\Phi^{-1}$ to transfer the boundary value problems from $\Omega$ to $\R_{+}^{2}$ and viceversa. 

Conformal mappings from $\R_{+}^{2}$ onto graph Lipschitz domains $\Omega$ preserve non-tangential cones in the following sense: for any $\alpha\in (0,\arctan{\frac{1}{L}})$, there exists $\beta,\gamma>0$ depending only on $\alpha$ such that for any $x\in\R\equiv \partial\R_{+}^{2}$, 
\begin{equation} \label{eq:thm1.1_JerisonKenig}
    \Gamma_{\beta}(\Phi(x),\Omega)
    \subset
    \Phi(\Gamma_{\alpha}(x,\R_{+}^{2}))
    \subset 
    \Gamma_{\gamma}(\Phi(x),\Omega).
\end{equation}
This result follows from  Proposition 1.1 in \cite{jerisonKenig}, because Lipschitz domains satisfy the \textit{three point condition} and conformal mappings are $1$-quasiconformal.

\subsection{Dirichlet, Neumann and Regularity problems}

\begin{definition}
For a locally integrable function $g$ on $\Lambda$, a function $v$ defined in $\Omega$ is a solution of the Dirichlet problem
in $\Omega$ with datum $g$, and we write
\begin{equation}\label{eq:Dirichlet}
    \Delta v=0\text{ in }\Omega\quad \text{ and }\quad v=g\text{ on }\partial\Omega,
\end{equation}
if $v$ is harmonic in $\Omega$ and $v$ converges non-tangentially to $g$.
\end{definition}

\begin{definition}
For a locally integrable function $g$ on $\Lambda$, a function $v$ defined in $\Omega$ is a solution of the Neumann problem
in $\Omega$ with datum $g$, and we write
\begin{equation}\label{eq:Neumann}
    \Delta v=0\text{ in }\Omega\quad \text{ and }\quad \nabla v\cdot \mathbf{n}=g\text{ on }\partial\Omega,
\end{equation}
 if $v$ is harmonic in $\Omega$ and 
\[
\lim _{z \triangleright \xi} \nabla v(z)\cdot \mathbf{n}(\xi)=g(\xi)\quad \text{ for } ds\text{-a.e. } \xi \in \Lambda.
\]
\end{definition}

\begin{definition}\label{def:definitionSolvability}
If $X$ is a Banach space of measurable functions defined on $\Lambda$, we say that the Dirichlet problem in $\Omega$ is solvable in $X$ if there exist a Banach space $Y$ of measurable functions defined on $\Lambda$ and $0<\alpha<\arctan (\frac{1}{L})$ such that for every $g \in X$ there exists a solution $v_g$ of the Dirichlet problem in $\Omega$ with datum $g$, and satisfies the estimate
\[
\left\|\mathcal{M}_\alpha (v_g)\right\|_Y
\lesssim
\|g\|_X.
\]
The definition for the Neumann problem is the analogous one, but the estimate is instead
\[
\left\|\mathcal{M}_\alpha(\nabla v_g)\right\|_Y \lesssim\|g\|_X.
\]
We say that the Neumann problem is uniquely solvable if the solution is unique up to additive constants.
\end{definition}

The notation $A\lesssim B$ means that there exists a constant $C>0$ such that $A\leq C\cdot B$. If we want to make explicit the dependence of $C$ on some parameter $\alpha$, we may write $A\lesssim_{\alpha} B$. We write $A\simeq B$ to denote $A\lesssim B$ and $B\lesssim A$.

For the Regularity problem, also known as Dirichlet Regularity problem, we need to be a little more careful with the definition in order to be rigurous in our statements. We introduce the following space, which is the biggest class of functions on $\Lambda$ for which we can take the Poisson integral after composing with $\Phi$.

\begin{definition}\label{def:spaceIntegrableFunctions}
    Given $\Lambda$ and $\Phi$ as above, we define
    \[
    \widetilde{L}^{1}(\Lambda)
    \coloneqq 
    \left\{
    f\in L_{loc}^{1}(\Lambda): \int_{\R}\frac{|(f\circ\Phi)(x)|}{1+|x|^2}\, dx<\infty
    \right\}.
    \]
\end{definition}

\begin{definition}\label{def:definitionSolvabilityRegularity}
If $X$ is a Banach space of measurable functions defined on $\Lambda$, we say that the Regularity problem in $\Omega$ is solvable in $X$ if there exists a Banach space $Y$ of measurable functions defined on $\Lambda$ and $0<\alpha<\arctan (\frac{1}{L})$ such that, for every $g \in \widetilde{L}^1(\Lambda)$ with weak derivative $g'\in X$, there exists a solution $v_g$ of the Dirichlet problem in $\Omega$ with datum $g$, and satisfies the estimate
\begin{equation}\label{eq:regularityEstimate}
\left\|\mathcal{M}_\alpha (\nabla v_g)\right\|_Y 
\lesssim
\|g'\|_X.    
\end{equation}
\end{definition}

The definition of weak derivative of a function defined on $\Lambda$ is clarified in Section \ref{section:regularity}.

\subsection{Function spaces and weights on \texorpdfstring{$\R$}{}}

A weight $w$ is a function defined on $\mathbb{R}$ that is locally integrable and positive a.e. Given $p\in (1,\infty)$, we denote by $L^p(\mathbb{R}, w)$ the space of measurable functions $f: \mathbb{R} \rightarrow \mathbb{C}$ such that
\[
\|f\|_{L^p(\mathbb{R}, w)}
=
\left(\int_{\mathbb{R}}|f(x)|^p w(x) d x\right)^{\frac{1}{p}}
<
\infty.
\]
The main class of weights is Muckenhoupt's class $A_p$ \cite{M1972}, defined for $p>1$ as
\[
w\in A_p(\R)
\longeq
\mathcal{M}_{hl}\colon L^p(w)\to L^p(w),
\]
and for $p=1$ as
\[
w\in A_1(\R)
\longeq
\mathcal{M}_{hl}\colon L^1(w)\to L^{1,\infty}(w).
\]
Here, $\mathcal{M}_{hl}$ denotes the Hardy-Littlewood maximal operator, defined for $f\in L_{\rm{loc}}^{1}(\R)$ as
\[
\mathcal{M}_{h l} f(x)
=
\sup_{x\in I}\frac{1}{|I|}\int_I|f(x)|\, dx,
\]
where the supremum is taken over all intervals $I \subset \mathbb{R}$ that contain $x$ and, for a measurable set $A \subset \mathbb{R}$, $|A|$ denotes the Lebesgue measure of $A$.

The class $A_{\infty}(\R)$ is defined as
\[
A_{\infty}(\R)
\coloneqq
\bigcup_{p\geq 1}A_{p}(\R).
\]
We will sometimes abbreviate $A_{p}(\R)$ by $A_p$, for any $p\in [1,\infty]$.

Some standard properties of $A_p$ weights (see page 218 of \cite{steinHarmonic} and page 546 of \cite{grafakosClassical}) are the following.

\begin{lemma}\label{lemma:improvingA1}    Let $p\in [1,\infty)$ and $u\in A_p$. 
\begin{enumerate}
\item  Then there exists $r>1$ small enough such that $u^r\in A_p$.
\item There exist $A_1$ weights $w_1$ and $w_2$ such that
\[
w=w_1 w_2^{1-p} .
\]
\end{enumerate}

Moreover, if $w_0,w_1\in A_p$ and $\alpha_0,\alpha_1>0$ satisfy $\alpha_0+\alpha_1\leq 1$, then $w_0^{\alpha_0}w_1^{\alpha_1}\in A_p$.

\end{lemma}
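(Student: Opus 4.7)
The plan is to obtain all three assertions from two standard ingredients: the reverse H\"older inequality for $A_p$ weights, and ordinary H\"older's inequality on the normalized measure $dx/|I|$.

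For (1), I start from the fact that every $u\in A_p$ satisfies a reverse H\"older inequality: there exists $\delta>0$, depending only on $[u]_{A_p}$, such that for every interval $I\subset\R$,
\[
\left(\frac{1}{|I|}\int_I u^{1+\delta}\right)^{1/(1+\delta)} \le C\,\frac{1}{|I|}\int_I u.
\]
Applying the same fact to the dual weight $u^{-1/(p-1)}\in A_{p'}$ yields a second exponent $\sigma>0$. With $r=1+\min(\delta,\sigma)$, both reverse H\"older inequalities hold with exponent $r$ (by Jensen if $r$ is strictly smaller than the optimal exponent); multiplying them gives
\[
\frac{1}{|I|}\int_I u^r \left(\frac{1}{|I|}\int_I u^{-r/(p-1)}\right)^{p-1} \le C^{rp}\,[u]_{A_p}^{\,r},
\]
which is exactly $u^r\in A_p$.

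Part (2) is the Jones factorization theorem. My plan is simply to cite it from the indicated references, or, equivalently, to invoke the Rubio de Francia algorithm applied to the Hardy--Littlewood maximal operator on $L^p(w)$ and on $L^{p'}(w^{1-p'})$, which produces two $A_1$ weights $w_1,w_2$ with $w=w_1 w_2^{1-p}$. This is by far the deepest of the three statements and is the main obstacle if one insists on a self-contained argument; the other two parts are entirely by hand.

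For the \emph{moreover} clause, the plan is a short H\"older computation with respect to the probability measure $dx/|I|$. Setting $s=\alpha_0+\alpha_1\in (0,1]$, the generalized H\"older inequality with exponents $1/\alpha_0$ and $1/\alpha_1$ (joint exponent $1/s\ge 1$), combined with Jensen's inequality, yields
\[
\frac{1}{|I|}\int_I w_0^{\alpha_0}w_1^{\alpha_1}
\le
\left(\frac{1}{|I|}\int_I (w_0^{\alpha_0}w_1^{\alpha_1})^{1/s}\right)^{s}
\le
\left(\frac{1}{|I|}\int_I w_0\right)^{\alpha_0}\left(\frac{1}{|I|}\int_I w_1\right)^{\alpha_1}.
\]
The analogous inequality for $w_i^{-1/(p-1)}$, raised to the power $p-1$ and multiplied with the previous display, gives an upper bound of $[w_0]_{A_p}^{\alpha_0}[w_1]_{A_p}^{\alpha_1}$ on the $A_p$ quotient of $w_0^{\alpha_0}w_1^{\alpha_1}$, as desired.
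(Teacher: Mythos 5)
Your proposal is correct, but note that the paper does not actually prove this lemma: it is stated as a collection of standard facts and referred to page 218 of Stein's \emph{Harmonic Analysis} and page 546 of Grafakos. The arguments you give for (1) and for the \emph{moreover} clause are precisely the standard ones behind those citations (reverse H\"older on the weight and its dual, and H\"older/Jensen against $dx/|I|$), and part (2) is indeed the Jones factorization theorem, which is reasonable to cite rather than reprove. So there is no methodological divergence to speak of; you have simply written out what the paper leaves implicit.

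One small caveat: the statement allows $p=1$, and your proof of (1) passes through the dual weight $u^{-1/(p-1)}\in A_{p'}$, which is not defined in that case. For $p=1$ the argument is in fact shorter: reverse H\"older for $u$ gives
\[
\frac{1}{|I|}\int_I u^r \le C^r\left(\frac{1}{|I|}\int_I u\right)^r \le C^r[u]_{A_1}^r\left(\operatorname*{ess\,inf}_I u\right)^r = C^r[u]_{A_1}^r\operatorname*{ess\,inf}_I u^r,
\]
so $u^r\in A_1$ with no appeal to a dual weight. The same remark applies to the \emph{moreover} clause, where for $p=1$ the second factor of the $A_p$ quotient should be replaced by the essential infimum; the inequality $\left(\operatorname*{ess\,inf}_I w_0\right)^{\alpha_0}\left(\operatorname*{ess\,inf}_I w_1\right)^{\alpha_1}\le \operatorname*{ess\,inf}_I\left(w_0^{\alpha_0}w_1^{\alpha_1}\right)$ closes that case. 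With these two lines added, your argument covers the full range $p\in[1,\infty)$ claimed in the statement.
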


To define Poisson and Neumann integrals for functions in weighted spaces, we need the following estimate. 

\begin{lemma}[\cite{CNO}]\label{lemma:estimateNeumannIntegral}
    Let $p\in [1,\infty)$. If $w\in A_p$, then $\int_{\R}\frac{|f(x)|}{1+|x|}\, dx<\infty$ for every $f\in L^p(w)$.
\end{lemma}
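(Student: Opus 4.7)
The plan is to split the proof into the two cases $p>1$ and $p=1$, treating them by slightly different devices.

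For $p>1$, I would apply Hölder's inequality with conjugate exponents:
\[
\int_{\R}\frac{|f(x)|}{1+|x|}\,dx
=\int_{\R} |f(x)|\,w(x)^{1/p}\cdot \frac{w(x)^{-1/p}}{1+|x|}\,dx
\leq \|f\|_{L^p(w)}\left(\int_{\R}\frac{w(x)^{-p'/p}}{(1+|x|)^{p'}}\,dx\right)^{1/p'}.
\]
Since $p'/p=1/(p-1)$, the remaining task is to bound $\int_{\R}\sigma(x)/(1+|x|)^{p'}\,dx$, where $\sigma:=w^{1-p'}$. Because $w\in A_p$, the standard duality for Muckenhoupt classes gives $\sigma\in A_{p'}$. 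I would decompose $\R$ into dyadic annuli $I_k:=[-2^k,2^k]$ and reduce the integral to a sum $\sum_k 2^{-kp'}\sigma(I_{k+1})$, so that the question becomes controlling the growth of $\sigma(I_k)$.

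The crucial input is the self-improvement (openness) of the $A_{p'}$ class: there exists $\varepsilon>0$ with $\sigma\in A_{p'-\varepsilon}$, which follows from Lemma \ref{lemma:improvingA1}(1). The $A_{p'-\varepsilon}$ condition, applied to the pair $I_0=[-1,1]\subset I_k$, yields the growth estimate
\[
\sigma(I_k)\leq C\left(\frac{|I_k|}{|I_0|}\right)^{p'-\varepsilon}\sigma(I_0)\lesssim 2^{k(p'-\varepsilon)}.
\]
Plugging this back produces a geometric series $\sum_k 2^{-k\varepsilon}$ that converges, finishing the case $p>1$.

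For $p=1$, I would argue more directly from the $A_1$ definition. The bound $|I|^{-1}\int_I w\le C\,\operatorname{ess\,inf}_I w$, applied with $I=I_k$ and keeping $\int_{[-1,1]}w$ fixed in the lower bound, gives $w(x)\gtrsim (1+|x|)^{-1}$ for a.e.\ $x\in\R$. Consequently,
\[
\int_{\R}\frac{|f(x)|}{1+|x|}\,dx
\lesssim \int_{\R}|f(x)|\,w(x)\,dx =\|f\|_{L^1(w)}<\infty.
\]
The main obstacle is really only in the case $p>1$: one must produce some decay beyond what the plain $A_{p'}$ condition delivers, and the self-improvement property is precisely the tool that supplies it; the $p=1$ case is comparatively soft once the pointwise lower bound on $w$ is in hand.
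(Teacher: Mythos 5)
Your argument is correct. Note that the paper itself does not prove this lemma --- it is quoted from \cite{CNO} --- so there is no in-text proof to match; judged on its own, your two-case scheme works: the H\"older step, the identification $w^{-p'/p}=w^{1-p'}=\sigma\in A_{p'}$, the reduction to $\sum_k 2^{-kp'}\sigma(I_{k+1})$, and the use of openness to get $\sigma(I_k)\lesssim 2^{k(p'-\varepsilon)}$ are all sound, as is the pointwise bound $w(x)\gtrsim (1+|x|)^{-1}$ in the $A_1$ case. Two small remarks. First, Lemma \ref{lemma:improvingA1}(1) as stated gives $\sigma^r\in A_{p'}$ for some $r>1$, not directly $\sigma\in A_{p'-\varepsilon}$; the latter does follow from the former by a short H\"older computation (take $p'-\varepsilon=1+(p'-1)/r$), but you should either include that line or invoke the standard openness property of the $A_q$ classes directly. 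Second, there is a shortcut that the paper itself uses in a nearby argument (the proof of Lemma \ref{lemma:wellDefined_Sp+R}): since $\frac{1}{1+|x|}\simeq \mathcal{M}_{hl}(\mathds{1}_{[-1,1]})(x)$, one has
\[
\int_{\R}\frac{\sigma(x)}{(1+|x|)^{p'}}\,dx
\simeq
\int_{\R}\bigl(\mathcal{M}_{hl}\mathds{1}_{[-1,1]}\bigr)^{p'}\sigma\,dx
\lesssim
\sigma([-1,1])<\infty,
\]
using only the boundedness of $\mathcal{M}_{hl}$ on $L^{p'}(\sigma)$ for $\sigma\in A_{p'}$; this packages your dyadic decomposition and the openness into one cited theorem, at the cost of being less elementary. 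Both routes are legitimate and essentially equivalent in content.
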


For the endpoint results, we need to introduce Lorentz spaces. The Lorentz spaces $L^{p,1}(\R,w)$ and $L^{p,\infty}(\R,w)$ are defined as the sets of measurable functions $f\colon \R\to\C$ such that
\begin{align*}
&\|f\|_{L^{p, 1}(\mathbb{R}, w)}
=
\int_0^{\infty}\left(\lambda_f^w(y)\right)^{\frac{1}{p}} \, dy
=
\frac{1}{p} \int_0^{\infty} f_w^*(t) t^{\frac{1}{p}-1}\, dt
<
\infty,
\\&
\|f\|_{L^{p, \infty}(\mathbb{R}, w)}
=
\sup_{y>0} y\left(\lambda_f^w(y)\right)^{\frac{1}{p}}
=
\sup _{t>0} t^{\frac{1}{p}} f_w^*(t)
<
\infty,
\end{align*}
respectively. Here, $\lambda_f^w$ is the distribution function of $f$ with respect to the measure $w(A)=\int_A w(x)\, dx$, defined for any measurable $A\subset \R$; that is, 
\[
\lambda_f^w(y)
=
w(\{t \in \mathbb{R}:|f(t)|>y\}).
\]
And $f_w^*$ is the decreasing rearrangement of $f$ with respect to $w$, given by 
\[
f_w^*(t)
=
\inf \left\{y>0: \lambda_f^w(y) \leq t\right\}.
\]

The class of restricted $A_p$-weights \cite{KermanTorchinsky, ChungHuntKurtz}, denoted by $A_p^{\mathcal{R}}(\R)$ or simply  $A_p^{\mathcal{R}}$, is defined as
\[
w\in A_p^{\mathcal{R}}(\R)
\longeq
\mathcal{M}_{hl}\colon L^{p,1}(w)\to L^{p,\infty}(w).
\]
It is known \cite{CarroOrtiz} that 
\[
A_p(\R)
\subsetneq 
A_p^{\mathcal{R}}(\R)
\subsetneq
\bigcup_{\varepsilon>0}A_{p+\varepsilon}(\R).
\]

These classes of weights can be generalized to consider maximal operators with respect to measures different from Lebesgue measure.

Let $u$ and $w$ be weights and let $p\in (1,\infty)$. Then, $w\in A_p(u)$ if and only if
\[
\mathcal{M}_{u}
\colon 
L^p(wu)\to L^p(wu),
\]
and $w\in A_p^{\mathcal{R}}(u)$ if and only if
\[
\mathcal{M}_{u}
\colon 
L^{p,1}(wu)\to L^p(wu).
\]
Here, 
\[
\mathcal{M}_{u}f(x)
\coloneqq 
\sup_{Q\ni x}\frac{1}{u(Q)}\left(\int_{Q}fu\, dx\right), \quad x\in\R^n.
\]

These classes can be characterized in terms of cubes, as usual. Indeed, $w\in A_p(u)$ if and only if
\[
\sup_{Q\subset\R^n}\frac{1}{u(Q)}\|\mathds{1}_{Q}\|_{L^p(wu)}\|w^{-1}\mathds{1}_{Q}\|_{L^{p'}(wu)}
<
\infty,
\]
and $w\in A_p^{\mathcal{R}}(u)$ if and only if
\[
\sup_{Q\subset\R^n}\frac{1}{u(Q)}\|\mathds{1}_{Q}\|_{L^{p,1}(wu)}\|w^{-1}\mathds{1}_{Q}\|_{L^{p',\infty}(wu)}
<
\infty.
\]
This last condition can be rewritten using Kolmogorov's inequalities, obtaining that 
\[
w\in A_p^{\mathcal{R}}(u)
\iff 
\sup_{Q\subset\R^n}\sup_{E\subset Q}\frac{u(E)}{u(Q)}\left(\frac{uw(Q)}{uw(E)}\right)^{\frac{1}{p}}
<
\infty.
\]

\subsubsection{Sparse operators}

A class of operators that has gained importance in recent years is that of sparse operators, because they satisfy in most cases the same weighted bounds as $\mathcal{M}_{hl}$ and many interesting operators can be bounded by a sparse one. Hence, sparse bounds for an operator imply weighted bounds for it. Gentle introductions to this topic are \cite{LernerNazarov,PereyraSparse}. 

\begin{definition}
Given $0<\eta<1$, a family $\mathcal{S}$ of cubes contained in $\mathbb{R}^n$ is said to be $\eta$-sparse if for every $Q \in \mathcal{S}$, there exists a measurable set $E_Q \subset Q$ such that $\left|E_Q\right| \geq \eta|Q|$ and the sets $\left\{E_Q\right\}_{Q \in \mathcal{S}}$ are pairwise disjoint. A family of cubes is sparse if it is $\eta$-sparse for some $0<\eta<1$.   
\end{definition}

\begin{theorem}[\cite{LernerSparse,carroDomingoSalazar}]

Given a sparse family $\mathcal{S}$, the corresponding sparse operator is defined as
\[
\mathcal{A}_{\mathcal{S}}(f)(x)
\coloneqq
\sum_{Q \in \mathcal{S}} \left(\frac{1}{|Q|} \int_Q f(y)\, dy\right) \mathds{1}_{Q}(x).
\]
Then, $\mathcal{A}_{\mathcal{S}}\colon L^p(w)\to L^p(w)$ if and only if $w\in A_p$, and $\mathcal{A}_{\mathcal{S}}\colon L^{p,1}(w)\to L^{p,\infty}(w)$ if and only if $w\in A_p^{\mathcal{R}}$.
\end{theorem}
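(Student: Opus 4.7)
The plan is to handle the two equivalences by a common strategy: sufficiency follows from sparse duality combined with boundedness of $\mathcal{M}_{hl}$ on the target space, while necessity comes from testing $\mathcal{A}_{\mathcal{S}}$ on indicator functions of measurable subsets of sparse cubes. I would exploit throughout the pairwise disjointness of $\{E_Q\}_{Q\in\mathcal{S}}$ and the comparability $|Q| \leq \eta^{-1}|E_Q|$ inherent to any sparse family.

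For the $L^p(w)$ equivalence, the key ingredient is the bilinear inequality
\[
\int_{\R} \mathcal{A}_{\mathcal{S}}(f)\, g \, dx
\lesssim \sum_{Q \in \mathcal{S}} \langle f \rangle_Q \langle g \rangle_Q |E_Q|
\leq \int_{\R} \mathcal{M}_{hl}(f)(x)\,\mathcal{M}_{hl}(g)(x) \, dx,
\]
valid for nonnegative $f,g$, obtained from the pointwise bound $\langle f\rangle_Q\langle g\rangle_Q \leq \mathcal{M}_{hl}(f)(x)\,\mathcal{M}_{hl}(g)(x)$ for $x\in E_Q$ combined with sparseness. Pairing $f \in L^p(w)$ against $g \in L^{p'}(w^{1-p'})$, H\"older's inequality together with the boundedness of $\mathcal{M}_{hl}$ on both $L^p(w)$ and $L^{p'}(w^{1-p'})$ (the latter via $w \in A_p \Longleftrightarrow w^{1-p'} \in A_{p'}$) yields the strong-type bound after duality. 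Necessity is obtained by testing on $f = w^{1-p'}\mathds{1}_Q$ for $Q \in \mathcal{S}$: since $\mathcal{A}_{\mathcal{S}}(f)\geq \langle f\rangle_Q \mathds{1}_Q$, comparing norms gives the $A_p$ inequality on $Q$, and passing through sparse families rich enough to contain any cube (up to a fixed dilation) produces the full $A_p$ condition.

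For the Lorentz case, sufficiency reduces by a standard restricted weak-type testing principle to showing
\[
w\bigl(\{x:\mathcal{A}_{\mathcal{S}}(\mathds{1}_E)(x) > \lambda\}\bigr) \lesssim \lambda^{-p}\, w(E)
\]
for every measurable $E$ and every $\lambda>0$. A stopping-time decomposition of $\mathcal{S}$ according to the values of $\langle\mathds{1}_E\rangle_Q$ expresses this super-level set as (essentially) a disjoint union of sparse principal cubes $Q$ with $|E\cap Q|/|Q|\gtrsim \lambda$, whose $w$-measures are controlled precisely by the Kolmogorov-type characterization
\[
\sup_{Q}\sup_{F\subset Q}\frac{|F|}{|Q|}\left(\frac{w(Q)}{w(F)}\right)^{1/p} < \infty
\]
of $A_p^{\mathcal{R}}$ recalled above; necessity again follows by testing on $f=\mathds{1}_F$ with $F\subset Q\in\mathcal{S}$. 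The hardest step is precisely this Lorentz sufficiency: absent clean $L^{p,1}$--$L^{p,\infty}$ duality, one must work directly with level sets via a Calder\'on--Zygmund-type construction tailored to $\mathcal{S}$, and this is exactly the point where $A_p^{\mathcal{R}}$, rather than $A_p$, emerges as the quantitatively correct hypothesis.
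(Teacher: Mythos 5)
You should first note that the paper offers no proof of this statement: it is quoted with citations to \cite{LernerSparse} and \cite{carroDomingoSalazar}, so the relevant comparison is with those arguments (and with the paper's own proof of Theorem \ref{thm:sufficientConditionSpR}, which reproduces the key technique). Your treatment of the $L^p(w)$ equivalence is the standard duality argument and is correct, including the testing of $f=w^{1-p'}\mathds{1}_Q$ for necessity (with the usual caveat that "only if" must be read as uniform boundedness over all sparse families, since a single fixed $\mathcal{S}$ need not see every cube; taking $\mathcal{S}=\{Q\}$ for each $Q$ settles this). The reduction of the Lorentz case to indicators and the necessity of $A_p^{\mathcal{R}}$ by testing $\mathds{1}_F$ with $F\subset Q$ are also fine.

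The genuine gap is in the Lorentz sufficiency, precisely the step you flag as hardest. Your claim that $\{\mathcal{A}_{\mathcal{S}}(\mathds{1}_E)>\lambda\}$ is essentially a union of sparse cubes with $\langle\mathds{1}_E\rangle_Q\gtrsim\lambda$ is false: since $\mathcal{A}_{\mathcal{S}}(\mathds{1}_E)(x)=\sum_{Q\ni x}\langle\mathds{1}_E\rangle_Q$, a nested chain $Q_1\supset\cdots\supset Q_N$ in $\mathcal{S}$ with $|E\cap Q_j|=\varepsilon|Q_j|$ for all $j$ produces values $N\varepsilon>\lambda$ at points where every individual average is only $\lambda/N$; applying the Kolmogorov characterization of $A_p^{\mathcal{R}}$ cube by cube then loses a factor $N^p$, so no stopping-time selection of "principal cubes with large average" can close the estimate. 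This is exactly the sense in which $\mathcal{A}_{\mathcal{S}}$ is not pointwise comparable to $\mathcal{M}_{hl}$, and the "Calder\'on--Zygmund-type construction tailored to $\mathcal{S}$" that would be needed is never supplied. The argument that actually works (and is the one in Corollary 3.2 of \cite{carroDomingoSalazar}, mirrored in the paper's proof of Theorem \ref{thm:sufficientConditionSpR}) is a weighted bilinear one: for measurable $E,F$,
\begin{equation*}
\int_F\mathcal{A}_{\mathcal{S}}(\mathds{1}_E)\,w\,dx
=\sum_{Q\in\mathcal{S}}\langle\mathds{1}_E\rangle_Q\,\frac{w(F\cap Q)}{w(Q)}\,w(Q)
\lesssim\sum_{Q\in\mathcal{S}}\int_{E_Q}\mathcal{M}_{hl}(\mathds{1}_E)\,\mathcal{M}_{w}(\mathds{1}_F)\,w\,dx
\le\int_{\R}\mathcal{M}_{hl}(\mathds{1}_E)\,\mathcal{M}_{w}(\mathds{1}_F)\,w\,dx,
\end{equation*}
using $w(Q)\lesssim w(E_Q)$ (from $w\in A_\infty$) and disjointness of the $E_Q$; then H\"older in Lorentz spaces bounds this by $\|\mathcal{M}_{hl}(\mathds{1}_E)\|_{L^{p,\infty}(w)}\|\mathcal{M}_{w}(\mathds{1}_F)\|_{L^{p',1}(w)}\lesssim w(E)^{1/p}w(F)^{1/p'}$, the first factor by the definition of $A_p^{\mathcal{R}}$ and the second by $L^{p',1}(w)$-boundedness of $\mathcal{M}_w$; taking $F=\{\mathcal{A}_{\mathcal{S}}(\mathds{1}_E)>\lambda\}$ gives the restricted weak-type bound. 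Your proposal would need to be repaired along these lines.
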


\subsection{Weights on \texorpdfstring{$\Lambda$}{} and conformal map}

In an analogous way, one can define weights on $\Lambda$ as functions $\nu\in L_{loc}^{1}(\Lambda,ds)$  that are positive $ds$-a.e. Then $L^p(\Lambda, \nu)$ is defined as
\[
\|f\|_{L^{p}(\Lambda,\nu)}
\coloneqq 
\left(\int_{\Lambda}|f(\xi)|^p\, \nu(\xi)ds(\xi)\right)^{\frac{1}{p}}.
\]
We recall the definitions of the Muckenhoupt classes of weights in the setting of a Lipschitz curve $\Lambda$, which recover the classical characterization in $\R$ when $\Lambda=\R$.

\begin{definition}\label{def:intervalLambda}
    We define an \textit{interval} or arc in $\Lambda$ as the image of an interval $I\subset \R$ by the mapping $\eta$. That is, $J\subset\Lambda$ is an interval in $\Lambda$ if $\eta^{-1}(J)$ is an interval in $\R$. More specifically, the interval in $\Lambda$ centered at $\xi\in\Lambda$ with radius $r>0$ is defined as
    \[
    B_{\Lambda}(\xi,r)
    \coloneqq 
    \eta(B(\eta^{-1}(\xi),r)),\quad r>0, \, \xi\in\Lambda,
    \]
    where
    \[
    B(\eta^{-1}(\xi),r)
    =
    B_{\R}(\eta^{-1}(\xi),r)
    =
    \{
    x\in \R: |x-\eta^{-1}(\xi)|<r
    \}
    \]
    denotes the $\R$-interval with center $\eta^{-1}(\xi)$ and radius $r$.
\end{definition}

\begin{definition}[\cite{kenigWeighted}]\label{def:definition1.4_Kenig}
Let $p\in (1,\infty)$ and let $\nu$ be a nonnegative and locally integrable function  on $\Lambda$. We say that $\nu\in A_p(\Lambda)$, if there exists a constant $C_p>0$ such that for all intervals $J \subset \Lambda$,
\begin{equation}\label{ApLambda}
\left[\frac{1}{s(J)} \int_J \nu d s\right]
\left[\frac{1}{s(J)} \int_J \nu^{-1 /(p-1)} d s\right]^{(p-1)} 
\leq
C_p,    
\end{equation}
where $s(J)$ denotes the arc-length measure of $J$. Also, $A_{\infty}(\Lambda)\coloneqq \bigcup_{p\in (1,\infty)}A_{p}(\Lambda)$.

\end{definition}

\begin{definition}\label{prop:funcionDensidadPhiNu}
Given $\nu$  a  weight over $\Lambda$, we define the following measure on $\R$ associated to $\nu$
\[
\Phi(\nu)(E)
\coloneqq 
\nu(\Phi(E)).
\]
\end{definition}
It is immediate to check that $\frac{d\Phi(\nu)}{dx}=\left(\nu \circ\Phi\right) |\Phi'|$. We shall simply write $\Phi(\nu)$ instead of $\frac{d\Phi(\nu)}{dx}$.

\begin{lemma}[Lemma 1.16 in \cite{kenigWeighted}]\label{phiNuAInfty}
With the above notation, 
\begin{equation}\label{AinftyEquivalence}
\nu\in A_{\infty}(\Lambda)\quad
\text{ if and only if }\quad
\Phi(\nu)\in A_{\infty}(\R).    
\end{equation}
\end{lemma}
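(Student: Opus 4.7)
\emph{Plan.} The argument relies on the standard geometric characterization of $A_{\infty}$ weights on spaces of homogeneous type: a weight $w$ belongs to $A_{\infty}$ if and only if there exist constants $C,\delta>0$ such that, for every ball (interval, arc) $B$ and every measurable $E\subset B$,
\[
\frac{w(E)}{w(B)}\le C\Bigl(\frac{|E|}{|B|}\Bigr)^{\delta},
\]
where $|\cdot|$ denotes the reference measure. The $A_{p}$ inequality combined with H\"older applied to $\int_{E}1=\int_{E}w^{1/p}w^{-1/p}$ yields the companion estimate $|E|/|B|\lesssim (w(E)/w(B))^{\delta'}$. Both characterizations hold on $(\R,dx)$ with Lebesgue intervals and, via the bi-Lipschitz graph parametrization $\eta$, on $(\Lambda,ds)$ with the $\Lambda$-arcs of Definition \ref{def:intervalLambda}, so together they characterize $A_{\infty}(\Lambda)$ in the sense of Definition \ref{def:definition1.4_Kenig}.

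Two ingredients bridge the two settings. First, Theorem \ref{thm:thm1.1_Kenig} says $\Phi\colon\R\to\Lambda$ is a homeomorphism preserving null sets, so $J\subset\Lambda$ is an arc if and only if $\Phi^{-1}(J)$ is a Lebesgue interval in $\R$, and $\Phi$ maps measurable subsets to measurable subsets of the same $\nu$- or $\Phi(\nu)$-null type. Second, Calder\'on's theorem gives $|\Phi'|\in A_{2}(\R)\subset A_{\infty}(\R)$, together with the change of variables $s(\Phi(E))=\int_{E}|\Phi'(x)|\,dx$.

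For the forward implication, assume $\nu\in A_{\infty}(\Lambda)$ and take a Lebesgue interval $I\subset \R$ with a measurable subset $E\subset I$. Setting $J=\Phi(I)$ and $E'=\Phi(E)\subset J$, we have $\Phi(\nu)(I)=\nu(J)$ and $\Phi(\nu)(E)=\nu(E')$ by the very definition of $\Phi(\nu)$, so chaining the $(C,\delta)$ condition on $\Lambda$ with that for $|\Phi'|$ on $\R$ yields
\[
\frac{\Phi(\nu)(E)}{\Phi(\nu)(I)}=\frac{\nu(E')}{\nu(J)}\lesssim\Bigl(\frac{s(E')}{s(J)}\Bigr)^{\delta_{1}}=\Bigl(\frac{\int_{E}|\Phi'|}{\int_{I}|\Phi'|}\Bigr)^{\delta_{1}}\lesssim\Bigl(\frac{|E|}{|I|}\Bigr)^{\delta_{1}\delta_{2}}.
\]
The converse is entirely symmetric: given a $\Lambda$-arc $J$ and measurable $E'\subset J$, set $I=\Phi^{-1}(J)$ and $E=\Phi^{-1}(E')$, apply the $(C,\delta)$ condition for $\Phi(\nu)$ to pass from $\nu(E')/\nu(J)$ to $(|E|/|I|)^{\delta}$, and then use the companion inequality for $|\Phi'|\in A_{\infty}(\R)$ to convert $(|E|/|I|)^{\delta}$ into a power of $s(E')/s(J)$. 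The main obstacle is conceptual rather than technical: one has to notice that, although $A_{p}(\Lambda)$ is defined through the graph parametrization $\eta$, arcs in $\Lambda$ also correspond to Lebesgue intervals under the \emph{conformal} parametrization $\Phi$, and the quantitative distortion between these two parametrizations is precisely controlled by the $A_{2}(\R)$ property of $|\Phi'|$; once this is in place, both directions follow by direct composition of the $(C,\delta)$ inequalities.
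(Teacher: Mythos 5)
Your argument is correct: the paper itself does not prove this lemma but only cites Lemma 1.16 of Kenig's thesis, and your proof --- chaining the $(C,\delta)$ characterization of $A_{\infty}$ on $(\Lambda,ds)$ and on $(\R,dx)$ through the change of variables $s(\Phi(E))=\int_{E}|\Phi'|\,dx$ and the fact that $|\Phi'|\in A_{2}(\R)$ --- is essentially the standard (and Kenig's original) route. The two points you single out, namely that $\Phi$ carries Lebesgue intervals to arcs of $\Lambda$ in the sense of Definition \ref{def:intervalLambda} (since $\eta^{-1}\circ\Phi$ is a homeomorphism of $\R$, hence monotone) and that $A_{\infty}$ weights satisfy the companion estimate $|E|/|I|\lesssim\bigl(w(E)/w(I)\bigr)^{\delta'}$, are exactly the ones that need checking, and you handle both correctly.
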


Another important fact already mentioned  in the introduction is the following.

\begin{theorem}[Theorem 1.10 in \cite{kenigWeighted}]
    $|\Phi'|\in A_2(\R)$.
\end{theorem}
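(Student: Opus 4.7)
The plan is to combine the holomorphic structure of $\Phi'$ on $\R_{+}^{2}$ with the Helson--Szegő theorem characterizing $A_{2}(\R)$ weights.

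First, since $\Phi$ is a conformal bijection of $\R_{+}^{2}$ onto $\Omega$, its derivative $\Phi'$ is holomorphic on $\R_{+}^{2}$ and vanishes nowhere, for a zero of $\Phi'$ would force $\Phi$ to be locally $m$-to-$1$ near that point and destroy injectivity. Hence $\log\Phi'$ is well-defined as a single-valued holomorphic branch $G$ on $\R_{+}^{2}$, whose boundary trace (which exists $dx$-a.e.\ on $\R$ by Theorem \ref{thm:thm1.1_Kenig}(2)) decomposes as $\operatorname{Re} G = \log|\Phi'|$ and $\operatorname{Im} G = \arg \Phi'$. By item (5) of the same theorem, $\|\arg\Phi'\|_{L^{\infty}(\R)} \leq \arctan L < \pi/2$, so the harmonic function $\operatorname{Im} G$ is bounded on $\R_{+}^{2}$ and coincides with the Poisson extension of its boundary trace.

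Next, I would identify $\log|\Phi'|$ on the boundary, up to an additive real constant, with the Hilbert transform of $\arg\Phi'$. Since $\arg\Phi'$ is bounded, both $\operatorname{Re} G$ and the conjugate Poisson integral $-Q\ast \arg\Phi'$ are harmonic conjugates of $\operatorname{Im} G = P\ast \arg\Phi'$ on $\R_{+}^{2}$, so their difference is a harmonic function with vanishing harmonic conjugate, hence a real constant $c$. Passing to non-tangential limits yields $\log|\Phi'|(x) = -H(\arg\Phi')(x) + c$ for $dx$-a.e.\ $x\in\R$, where $H$ denotes the Hilbert transform on $\R$. The Helson--Szegő theorem then asserts that $w\in A_{2}(\R)$ if and only if $\log w = u + Hv$ for some $u,v\in L^{\infty}(\R)$ with $\|v\|_{\infty}<\pi/2$; applying this with $u=c$ and $v=-\arg\Phi'$ (whose $L^{\infty}$-norm is at most $\arctan L < \pi/2$) gives $|\Phi'|\in A_{2}(\R)$.

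The main obstacle is the rigorous justification of the boundary identity $\log|\Phi'| = -H(\arg\Phi') + c$ almost everywhere, because $\log|\Phi'|$ is not a priori bounded or even in $L^{\infty}_{\mathrm{loc}}$, while the Hilbert transform of a bounded function lives only in $\mathrm{BMO}$. This is handled by working first in the interior of $\R_{+}^{2}$, where uniqueness of harmonic conjugates of the bounded harmonic function $\operatorname{Im} G$ forces the identity up to a constant, and only afterwards passing to the boundary using Theorem \ref{thm:thm1.1_Kenig}(2) for the non-tangential limits of $\operatorname{Re} G$ and the classical Fatou-type theorem for the conjugate Poisson integral of an $L^{\infty}$ datum. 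Once this boundary representation is in place, Helson--Szegő closes the argument immediately.
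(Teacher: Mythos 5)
The paper does not actually prove this statement: it is quoted as Theorem 1.10 of \cite{kenigWeighted} (see also \cite{calderonCauchyIntegral}), so your proposal can only be compared with the classical argument in those references. Your route --- write $\log\Phi'=\log|\Phi'|+i\arg\Phi'$, note that $\arg\Phi'$ is small in modulus, identify $\log|\Phi'|$ with $c-H(\arg\Phi')$ on the boundary, and invoke the Helson--Szeg\H{o} theorem --- is indeed the standard one, and everything downstream of the boundedness of $\arg\Phi'$ on all of $\R_{+}^{2}$ (uniqueness of harmonic conjugates up to constants, Fatou's theorem for the conjugate Poisson integral of an $L^\infty$ datum, and the Helson--Szeg\H{o} characterization with $\|v\|_\infty\le\arctan L<\pi/2$) is correct as you describe it.

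The genuine gap is the sentence where you deduce that $\operatorname{Im}G=\arg\Phi'$ is bounded \emph{on $\R_{+}^{2}$} from item (5) of Theorem \ref{thm:thm1.1_Kenig}. That item only bounds the non-tangential boundary trace $dx$-a.e., and an a.e.\ bound on the boundary values of a harmonic function does not imply an interior bound without some a priori control: the harmonic function $\operatorname{Im}z$ on $\R_{+}^{2}$ has boundary trace $0$ everywhere yet is unbounded, and such terms (or singular-inner-type contributions) are invisible to the boundary data. Since your entire argument hinges on $\operatorname{Im}G$ being a \emph{bounded} harmonic function --- this is what lets you represent it as the Poisson integral of its trace, pin down $\operatorname{Re}G$ as $-Q\ast\arg\Phi'+c$, and feed the $\|v\|_\infty<\pi/2$ hypothesis of Helson--Szeg\H{o} --- this step requires its own proof, and it is precisely the geometric content of the lemma preceding Theorem 1.10 in \cite{kenigWeighted}. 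One classical way to close it: for every direction $e^{i\beta}$ with $\beta\in(\arctan L,\pi-\arctan L)$ the translation $w\mapsto w+te^{i\beta}$ maps $\overline{\Omega}$ into $\Omega$, so $z\mapsto\Phi^{-1}(\Phi(z)+te^{i\beta})$ is a semigroup of holomorphic self-maps of $\R_{+}^{2}$ with Denjoy--Wolff point at infinity, whose generator $e^{i\beta}/\Phi'(z)$ must therefore have nonnegative imaginary part; letting $\beta$ tend to the two endpoints of the admissible interval yields $|\arg\Phi'(z)|\le\arctan L$ for every $z\in\R_{+}^{2}$. With that interior sector bound in hand, the rest of your argument goes through.
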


\subsection{The Atomic Hardy Space}

We recall the definition of the (weighted) atomic Hardy space over $\Lambda$ with respect to a measure $\nu$ on $\Lambda$, denoted $H_{at}^{1}(\Lambda,\nu)$. This is a particular case of the general construction for spaces of homogeneous type \cite{coifmanWeiss}.

\begin{definition}\label{def:dsLambdaAtom}
    A function $a\colon\Lambda\to \C$ is an \textit{atom} over $\Lambda$ with respect to the measure $\nu$, abbreviated $(\Lambda,\nu)$-atom, if there exists an interval $B_{\Lambda}(\xi_0,r)$ on $\Lambda$,  with $\xi_0\in\Lambda$ and $r>0$, such that
    \begin{itemize}
        \item $\supp{a}\subset B_{\Lambda}(\xi_0,r)$.
        \item $\|a\|_{\infty}\leq \frac{1}{\nu(B_{\Lambda}(\xi_0,r))}$.
        \item $\int_{\Lambda}a(\xi) \nu(\xi)\, ds(\xi)=0$.
    \end{itemize}
\end{definition}

\begin{definition}
    The atomic Hardy space over $\Lambda$ with respect to $\nu$ is defined as
    \[
    H_{at}^1(\Lambda,\nu)
    \coloneqq 
    \{f\in L^1(\Lambda,\nu): f=\sum \lambda_j a_j, \text{ where } a_j \text{ is } (\Lambda,\nu)\text{-atom and }\sum|\lambda_j|<\infty\}.
    \]
    The norm associated to it is
    \[
    \|f\|_{H_{at}^1(\Lambda,\nu)}
    \coloneqq 
    \inf\left\{\sum|\lambda_j|: f=\sum \lambda_j a_j,\hspace{0.1cm} a_j\text{ atoms}\right\}.
    \]
\end{definition}

\subsection{Hardy spaces over graph Lipschitz domains}

\begin{definition}
Let $\nu$ be a measure on $\Lambda$ and let $\alpha\in (0,\arctan{\frac{1}{L}})$. Define the Hardy space 
\[
H^p(\Omega, \nu)
\coloneqq 
\left\{
h: \Omega \rightarrow \mathbb{C}: h \text { is analytic in } \Omega \text { and }\left\|\mathcal{M}_\alpha(h)\right\|_{L^p(\Lambda, \nu)}<\infty
\right\}
\]
and the weak Hardy space
\[
H^{p, \infty}(\Omega, \nu)
\coloneqq 
\left\{
h: \Omega \rightarrow \mathbb{C}: h \text { is analytic in } \Omega \text { and }\left\|\mathcal{M}_\alpha(h)\right\|_{L^{p, \infty}(\Lambda, \nu)}<\infty
\right\},
\]
and set 
\[
\|h\|_{H^p(\Omega, \nu)}
\coloneqq 
\left\|\mathcal{M}_\alpha(h)\right\|_{L^p(\Lambda, \nu)},\quad 
\|h\|_{H^{p, \infty}(\Omega, \nu)}
\coloneqq
\left\|\mathcal{M}_\alpha(h)\right\|_{L^{p, \infty}(\Lambda, \nu)}.
\]    
\end{definition}

The definitions of $H^p(\Omega, \nu)$ and $H^{p, \infty}(\Omega, \nu)$ are independent of $\alpha$ and any other chosen value gives an equivalent norm \cite{kenigWeighted, CNO}. Some basic results relating the Hardy space over a graph Lipschitz domain with the Hardy space over the upper half-plane are the following.

\begin{theorem}[\cite{kenigWeighted, CNO}]\label{thm2.8_Kenig}
    Let $p\in (0,\infty)$ and $\nu\in A_{\infty}(\Lambda)$. Then
    \[
    F\in H^p(\Omega,\nu)
    \iff 
    F\circ\Phi\in H^p(\R_{+}^{2},\Phi(\nu)),
    \]
    and
    \[
    h\in H^{p,\infty}(\Omega,\nu) \quad\iff \quad 
    h\circ\Phi\in H^{p,\infty}(\R_{+}^{2},\Phi(\nu)),
    \]
    with equivalence of norms.
\end{theorem}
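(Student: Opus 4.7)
The plan is to combine the cone-preservation property \eqref{eq:thm1.1_JerisonKenig} with a change of variables via the pushforward measure $\Phi(\nu)$, and then invoke the cone-aperture independence of the Hardy (quasi)norms. Since $\Phi\colon \R_+^{2}\to\Omega$ is a conformal bijection, $F$ is analytic on $\Omega$ iff $F\circ\Phi$ is analytic on $\R_+^{2}$, so only the boundary estimates are at stake.

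The first step is to turn \eqref{eq:thm1.1_JerisonKenig} into a pointwise sandwich of maximal functions. For each $\alpha\in (0,\arctan(1/L))$ choose $\beta,\gamma$ as in \eqref{eq:thm1.1_JerisonKenig}; taking $\sup |F|$ on each side of the chain $\Gamma_\beta(\Phi(x),\Omega)\subset \Phi(\Gamma_\alpha(x,\R_+^{2}))\subset \Gamma_\gamma(\Phi(x),\Omega)$ gives
\[
\mathcal{M}_{\beta}(F)(\Phi(x)) \;\leq\; \mathcal{M}_{\alpha}(F\circ\Phi)(x) \;\leq\; \mathcal{M}_{\gamma}(F)(\Phi(x)),\qquad x\in\R.
\]

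Next I would transport the norms from $\Lambda$ to $\R$. Since $\Phi(\nu)(E)=\nu(\Phi(E))$ and $\Phi\colon\R\to\Lambda$ is a bimeasurable bijection up to null sets (Theorem \ref{thm:thm1.1_Kenig}(4)), for every measurable $G$ on $\Lambda$ one has
\[
\int_\Lambda |G|^{p}\,d\nu \;=\; \int_{\R} |G\circ\Phi|^{p}\,\Phi(\nu)\,dx \qquad\text{and}\qquad \lambda^{\nu}_{G} \;=\; \lambda^{\Phi(\nu)}_{G\circ\Phi}.
\]
Applying these with $G=\mathcal{M}_\beta F$ and $G=\mathcal{M}_\gamma F$ and inserting the sandwich above yields
\[
\|\mathcal{M}_\beta F\|_{L^p(\Lambda,\nu)} \;\leq\; \|\mathcal{M}_\alpha(F\circ\Phi)\|_{L^p(\R,\Phi(\nu))} \;\leq\; \|\mathcal{M}_\gamma F\|_{L^p(\Lambda,\nu)},
\]
and the analogous pinch for the $L^{p,\infty}$-quasinorms through the distribution-function identity.

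To upgrade the pinch to an equivalence of norms, the final step is cone-aperture independence: for $\nu\in A_{\infty}(\Lambda)$ (and correspondingly $\Phi(\nu)\in A_{\infty}(\R)$, via Lemma \ref{phiNuAInfty}), the quantities $\|\mathcal{M}_\alpha F\|_{L^{p}(\Lambda,\nu)}$ and $\|\mathcal{M}_\alpha F\|_{L^{p,\infty}(\Lambda,\nu)}$ do not depend on $\alpha\in (0,\arctan(1/L))$ up to equivalent constants. This is the main obstacle I expect: it amounts to a weighted Fefferman--Stein good-$\lambda$ comparison of non-tangential maximal functions with different apertures, adapted to the graph Lipschitz boundary $\Lambda$, in which the $A_{\infty}$ property is exactly what converts the $ds$-control of exceptional sets into $\nu$-control. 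Once this cone-independence is in hand (either as a cited fact or established by transferring a good-$\lambda$ argument from $\R_+^{2}$ through $\Phi$), both directions of both equivalences follow at once, with matching constants depending only on $\alpha$, $L$ and $[\nu]_{A_\infty}$.
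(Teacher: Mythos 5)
This theorem is not proved in the paper: it is quoted from \cite{kenigWeighted, CNO}, and Remark \ref{generalizationThm2.8_Kenig} confirms that the cited proofs rest on exactly the two ingredients you use, namely the cone correspondence \eqref{eq:thm1.1_JerisonKenig} and the measure transfer under $\Phi$, combined with aperture-independence of the weighted non-tangential maximal norms for $A_\infty$ weights. Your sandwich of maximal functions, the change of variables via $\Phi(\nu)$, and the good-$\lambda$ aperture comparison constitute a correct proof along essentially the same lines as the referenced one.
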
 

\begin{remark}\label{generalizationThm2.8_Kenig}
    The estimates of Theorem \ref{thm2.8_Kenig} are valid for functions that are not holomorphic, because the only tools used in their proofs are the behavior of non-tangential regions via $\Phi$ and the size estimate. That is, given a graph Lipschitz domain $\Omega$, there exist angles $\alpha\in (0,\arctan{\frac{1}{L}})$ and $\beta\in (0,\frac{\pi}{2})$ such that, for every function $h$ defined in $\Omega$, we have $\mathcal{M}_{\alpha}(h)\in L^p(\Lambda,\nu)$ if and only if $\mathcal{M}_{\beta}(h\circ\Phi)\in L^p(\R,\Phi(\nu))$, with equivalent norms
    \[
    \|\mathcal{M}_{\alpha}(h)\|_{L^p(\Lambda,\nu)}
    \simeq_{\alpha,\beta}
    \|\mathcal{M}_{\beta}(h\circ\Phi)\|_{L^p(\R,\Phi(\nu))}.
    \]
    The same holds for $L^{p,\infty}$.
\end{remark}

We also need the following result.

\begin{theorem}[\cite{AE_FBY}]
\label{thm:analogoLema4.1CNO}
    Let $\nu\in A_{\infty}(\Lambda)$ and $p\in [1,\infty)$. If $|\Phi'|^{-p}\Phi(\nu)\in A_\infty$, then
    \[
    F\in H^p(\R_{+}^{2},|\Phi'|^{-p}\Phi(\nu))
    \iff 
    F\cdot \frac{1}{\Phi'}\in H^p(\R_{+}^{2},\Phi(\nu)),
    \]
    with equivalent norms.
\end{theorem}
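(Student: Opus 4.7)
The plan is to reduce the claim to an identity between boundary values. Since $\Phi$ is conformal and $\Phi'$ is analytic and nowhere vanishing in $\R_{+}^{2}$, both $1/\Phi'$ and $\Phi'$ are analytic in $\R_{+}^{2}$, so multiplication by $1/\Phi'$ defines an involution on analytic functions in $\R_{+}^{2}$. Consequently, the two sides of the claimed equivalence concern the same class of analytic functions on $\R_{+}^{2}$, but measured against different weights related by the pointwise identity $w_{1}:=|\Phi'|^{-p}\Phi(\nu)=|\Phi'|^{-p}w_{2}$, with $w_{2}:=\Phi(\nu)$. In particular, the argument will be entirely symmetric in the two directions.

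The key abstract input is the standard equivalence: for any function $H$ analytic in $\R_{+}^{2}$ with non-tangential boundary value $H^{*}$ and any weight $u\in A_{\infty}(\R)$,
\[
\|\mathcal{M}_{\alpha}(H)\|_{L^{p}(\R,u)}
\;\simeq\;
\|H^{*}\|_{L^{p}(\R,u)}.
\]
The $\le$ direction is immediate from $|H^{*}(x)|\le \mathcal{M}_{\alpha}(H)(x)$ a.e. For $\ge$, since $|H|^{q}$ is subharmonic for every $q>0$, one gets the pointwise estimate $\mathcal{M}_{\alpha}(H)(x)^{q}\lesssim \mathcal{M}_{hl}(|H^{*}|^{q})(x)$; choosing $q>0$ small enough that $u\in A_{p/q}$ (possible because $u\in A_{\infty}=\bigcup_{r\ge 1}A_{r}$) and using the $L^{p/q}(u)$-boundedness of the Hardy--Littlewood maximal operator yields the reverse bound.

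With this lemma in hand, I would apply it to the weights $w_{1}\in A_{\infty}$ (by hypothesis) and $w_{2}\in A_{\infty}$ (by Lemma \ref{phiNuAInfty}, since $\nu\in A_{\infty}(\Lambda)$). By Theorem \ref{thm:thm1.1_Kenig}, $\Phi'$ admits a non-tangential limit $\Phi'(x)\neq 0$ $dx$-a.e., so if $F$ is analytic in $\R_{+}^{2}$ with non-tangential boundary value $F^{*}$, then $G:=F/\Phi'$ is analytic in $\R_{+}^{2}$ with non-tangential boundary value $G^{*}(x)=F^{*}(x)/\Phi'(x)$ $dx$-a.e. The pointwise identity on the boundary
\[
|G^{*}(x)|^{p}\,w_{2}(x)
\;=\;
\frac{|F^{*}(x)|^{p}}{|\Phi'(x)|^{p}}\,\Phi(\nu)(x)
\;=\;
|F^{*}(x)|^{p}\,w_{1}(x)
\]
gives $\|G^{*}\|_{L^{p}(w_{2})}=\|F^{*}\|_{L^{p}(w_{1})}$, and chaining these equivalences yields
$\|F\|_{H^{p}(w_{1})}\simeq \|F^{*}\|_{L^{p}(w_{1})}=\|G^{*}\|_{L^{p}(w_{2})}\simeq \|G\|_{H^{p}(w_{2})}$, which is the claim.

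The main technical point is the subharmonic estimate underlying the abstract lemma: one needs the Poisson-type bound $|H(z)|^{q}\lesssim P_{y}\ast|H^{*}|^{q}(x)$, which in turn requires ruling out a singular contribution at infinity of $|H|^{q}$. For $H\in H^{p}(\R_{+}^{2},u)$ with $u\in A_{\infty}$, this is classical weighted Hardy space theory (cf.\ Kenig's thesis \cite{kenigWeighted}); once this is granted, the rest of the argument is the purely algebraic boundary identity together with the trivial observation that $\Phi'$ has no zeros in $\R_{+}^{2}$.
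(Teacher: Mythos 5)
The paper does not actually prove Theorem \ref{thm:analogoLema4.1CNO}; it imports it wholesale from \cite{AE_FBY}, so there is no in-text proof to compare against. Judged on its own terms, your reduction to boundary values is a natural route, and most of the ingredients (subharmonicity of $|H|^q$, the bound $\mathcal{M}_\alpha(P_y\ast g)\lesssim \mathcal{M}_{hl}(g)$, the choice of $q$ with $u\in A_{p/q}$, the algebraic identity $|G^*|^p\Phi(\nu)=|F^*|^p|\Phi'|^{-p}\Phi(\nu)$) are sound. But there is a genuine gap at the step you dismiss as ``the trivial observation that $\Phi'$ has no zeros.''

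Your key lemma, as stated for \emph{any} analytic $H$ with non-tangential boundary values, is false: $H(z)=e^{-iz}$ has $|H^*|\equiv 1$ on $\R$ but $\mathcal{M}_\alpha(H)\equiv\infty$. The lemma is only valid for $H$ satisfying the Poisson-majorant property $|H(x+iy)|^q\le P_y\ast|H^*|^q(x)$, i.e.\ for $H$ in a Smirnov-type class with no singular inner factor at infinity. You do flag this for $F$, where it follows from $F\in H^p(\R^2_+,w_1)$ with $w_1\in A_\infty$ (Kenig, Garc\'ia-Cuerva). The problem is the direction in which you must apply the lemma to $G=F/\Phi'$ (and, symmetrically, to $G\cdot\Phi'$): membership of $G$ in the good class is exactly what is at stake, and it does \emph{not} follow from $F$ being in the good class plus zero-freeness of the multiplier --- multiplying an $N^+$ function by the zero-free analytic function $e^{-iz}$ already destroys the majorant property. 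What is actually needed is that $\Phi'$ (hence $1/\Phi'$) is an \emph{outer} function, equivalently $\log|\Phi'(x+iy)|=P_y\ast\log|\Phi'|(x)$; this is a nontrivial fact about conformal maps onto Lipschitz graph domains (it is quoted in this paper, again with a pointer to \cite{AE_FBY}, in the proof of Theorem \ref{thm:Lp1_Solvability}) and it is precisely the ``analytic extension of an $A_\infty$-weight'' that gives the cited reference its title. Once you insert the outerness of $\Phi'$ --- e.g.\ via the boundedness of $\arg\Phi'$, which forces the singular and Blaschke factors to be trivial --- your chain of equivalences closes and both implications follow symmetrically. Without it, the argument does not go through.
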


\subsection{Poisson integrals}

Finally, we introduce some notation about Poisson integrals and prove a result about logarithms of functions in $H^p(w)$ that we will need in the proof of Theorem \ref{thm:Lp1_Solvability}.

\begin{definition}
    We define the Poisson kernel at $(x,y)\in\R_{+}^{2}$ as
    \[
    P_{y}(x)
    \coloneqq 
    \frac{1}{\pi}\frac{y}{x^2+y^2}.
    \]
    Given a function $f$ with $\int_{\R}\frac{|f(x)|}{1+|x|^2}\, dx<\infty$, we define the Poisson integral of $f$ at $(x,y)\in\R_{+}^{2}$ as
    \begin{equation}\label{eq:definitionPoissonIntegral}
    (P_y\ast f)(x)
    \coloneqq 
    \frac{1}{\pi}\int_{\R}\frac{y}{(x-t)^2+y^2}f(t)\, dt.
    \end{equation}
\end{definition}

\begin{definition}
    We define the conjugate Poisson kernel at $(x,y)\in\R_{+}^{2}$ as
    \[
    Q_{y}(x)
    \coloneqq 
    \frac{1}{\pi}\frac{x}{x^2+y^2}.
    \]
    Given a function $f$ with $\int_{\R}\frac{|f(x)|}{1+|x|}\, dx<\infty$, the conjugate Poisson integral of $f$ at $(x,y)\in\R_{+}^{2}$ is
    \[
    (Q_y\ast f)(x)
    \coloneqq 
    \frac{1}{\pi}\int_{\R}\frac{x-t}{(x-t)^2+y^2}f(t)\, dt.
    \]
\end{definition}

The following control of the non-tangential maximal operator of Poisson integrals is well-known.

\begin{theorem}\label{thm:Stein_thm1Page197}
    Let $f\in L_{loc}^{1}(\R)$ with $\int_{\R}\frac{|f(x)|}{1+|x|^2}\, dx<\infty$. If $u(x,t)\coloneqq P_t\ast f(x)$ for $(x,t)\in\R_{+}^{2}$, then $u$ converges non-tangentially to $f$ a.e. in $\R$. Besides
    \[
    \mathcal{M}_{\alpha}(u)(x)
    \lesssim_{\alpha}
    \mathcal{M}_{hl}(f)(x),\quad x\in\R.
    \]
\end{theorem}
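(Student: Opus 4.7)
The plan is to first establish the pointwise inequality $\mathcal{M}_{\alpha}(u)(x) \lesssim_{\alpha} \mathcal{M}_{hl}(f)(x)$ and then to deduce non-tangential convergence a.e. via a standard density argument.

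For the pointwise bound, fix $x \in \R$ and let $(x_0, t) \in \Gamma_{\alpha}(x)$, so that $|x_0 - x| < t \tan \alpha$. The geometric input is that this forces $P_t(x_0 - s)$ to be comparable to $P_t(x - s)$ up to a constant depending only on $\alpha$: when $|x - s| \geq 2 t \tan \alpha$ we have $|x_0 - s| \geq |x-s|/2$, while when $|x-s| < 2 t \tan \alpha$ the denominator $t^2 + (x_0 - s)^2$ is trapped between $t^2$ and $(1 + 9\tan^2 \alpha) t^2$. Consequently
\[
|u(x_0, t)|
\leq
\int_{\R} P_t(x_0 - s)|f(s)|\, ds
\lesssim_{\alpha}
(P_t \ast |f|)(x).
\]
The classical estimate $\sup_{t>0} (P_t \ast |f|)(x) \lesssim \mathcal{M}_{hl}(f)(x)$ is then obtained by decomposing $\R$ as $\{|x-s|<t\} \cup \bigcup_{k\ge 0}\{2^k t \leq |x-s| < 2^{k+1} t\}$, bounding $P_t(y) \leq \min\{1/(\pi t),\, t/(\pi y^2)\}$ on each annulus, and summing a geometric series in $k$.

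For a.e. non-tangential convergence, I would argue locally. Fix $R > 0$ and split $f = f_R + f^R$ with $f_R = f\mathds{1}_{[-2R, 2R]} \in L^1(\R)$. For $|x| \leq R$ and $(x_0, t) \in \Gamma_{\alpha}(x)$ with $t$ small, one has $|x_0 - s| \geq |s|/2$ when $|s| \geq 4R$, hence $P_t(x_0 - s) \lesssim_{R} t/(1+s^2)$, and by the integrability hypothesis on $f$ together with dominated convergence, $(P_t \ast f^R)(x_0) \to 0$ as $(x_0, t) \triangleright x$. For the $L^1$ piece $f_R$, non-tangential convergence is trivial on the dense subclass $C_c(\R)$ (by uniform continuity and $\int P_t = 1$) and is upgraded to arbitrary $f_R \in L^1$ by the standard argument combining the maximal bound of the first step with the weak-type $(1,1)$ boundedness of $\mathcal{M}_{hl}$. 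Since $R$ is arbitrary, we obtain non-tangential convergence to $f$ at a.e. point of $\R$.

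The only real obstacle is the comparability $P_t(x_0 - s) \lesssim_{\alpha} P_t(x - s)$; once it is in place everything else is classical one-dimensional Poisson theory, as in Stein's \emph{Singular Integrals and Differentiability Properties of Functions}, Chapter III, Theorem 1, which is precisely what the statement cites. No new issue arises from the weighted-measure or Lipschitz-domain framework here, since this lemma is a statement on the flat half-plane.
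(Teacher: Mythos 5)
Your proof is correct and follows exactly the classical argument of Stein's Theorem 1 (Chapter III of \cite{steinSingularIntegrals}), which is all the paper itself does: its ``proof'' is a two-line citation together with the remark that the argument extends to the hypothesis $\int_{\R}\frac{|f(x)|}{1+|x|^2}\,dx<\infty$. Your decomposition $f=f_R+f^R$ with the bound $P_t(x_0-s)\lesssim_R t/(1+s^2)$ on the tail is precisely the detail that justifies that remark, so you have supplied the one step the paper leaves implicit; the cone-comparability estimate and the annular majorization of $P_t\ast|f|$ by $\mathcal{M}_{hl}f$ are both carried out correctly.
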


\begin{proof}
    It is proved for $L^p(\R)$-functions in Theorem 1 in page 197 of \cite{steinSingularIntegrals}. The proof also works with the above generality. 
\end{proof}

\subsection{Poisson integrals and logarithms}\label{section:logarithm}

Following \cite{garnett}, given $F\colon\mathbb{D}\to\C$, the Poisson integral of the unit disk $\mathbb{D}$ of $F$ is defined as
\[
\mathbb{P}_{\mathbb{D}}[F](w)
=
\frac{1}{2\pi}\int_{0}^{2\pi}\operatorname{Re}\left({\frac{e^{i\theta}+w}{e^{i\theta}-w}}\right)F(e^{i\theta})\, d\theta,
\quad w\in \mathbb{D},
\]
and, for convenience, we denote the Poisson integral \eqref{eq:definitionPoissonIntegral} also as 
\[
\mathbb{P}_{\R_{+}^{2}}[f](x+iy)
=
P_{y}\ast f(x), \quad x+iy\in \R_{+}^{2}.
\]

The Poisson integrals of $\mathbb{D}$ and $\R_{+}^{2}$ are related by a simple change of variables.

\begin{lemma}\label{lemma:equivalencePoissonIntegralsBis}
    Let $\Tilde{F}\colon \partial\R_{+}^{2}\equiv \R\to \R$ be a measurable function such that $\int_{\R}\frac{|\Tilde{F}(x)|}{1+|x|^2}\, dx<\infty$. Then
    \[
    \mathbb{P}_{\R_{+}^{2}}[\Tilde{F}](z)
    =
    \mathbb{P}_{\mathbb{D}}[\Tilde{F}\circ \varphi](\varphi^{-1}(z)),
    \quad z\in\R_{+}^{2}, 
    \]
    where  $\varphi\colon \mathbb{D}\to \R_{+}^{2}$
    is the conformal mapping 
    \begin{equation}\label{eq:conformalDiskToPlane}
    \varphi(w)
    =
    i\frac{1-w}{1+w}\quad w\in\mathbb{D}.
    \end{equation}
   
\end{lemma}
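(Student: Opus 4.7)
The plan is to reduce the identity to an explicit change of variables on the boundary induced by $\varphi$. The Poisson kernel is conformally invariant up to the boundary Jacobian, so once one writes down the correspondence between $\theta \in (-\pi,\pi)$ and $t \in \R$ given by $t = \varphi(e^{i\theta})$, the two Poisson kernels match after multiplying by $d\theta/dt$.

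First I would parametrize $\partial \mathbb{D}$ by $\theta \in (-\pi,\pi)$ and compute $\varphi(e^{i\theta})$ using the half-angle identities $1-e^{i\theta} = -2i\,e^{i\theta/2}\sin(\theta/2)$ and $1+e^{i\theta} = 2\,e^{i\theta/2}\cos(\theta/2)$, obtaining $\varphi(e^{i\theta}) = \tan(\theta/2) \in \R$. Setting $t = \tan(\theta/2)$ gives $d\theta = \tfrac{2}{1+t^2}\, dt$, and the measure-zero point $\theta = \pi$ (which maps to $\infty$) can be ignored. The integrability hypothesis $\int_\R |\tilde F(t)|/(1+t^2)\, dt < \infty$ is then exactly what is needed to make $\tilde F \circ \varphi \in L^1(\partial \mathbb{D}, d\theta)$, so the disk Poisson integral of $\tilde F \circ \varphi$ is well defined.

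Next, fix $z = x+iy \in \R_+^2$ and set $w = \varphi^{-1}(z) = -(z-i)/(z+i)$. A direct calculation gives
\[
1 - |w|^2 = \frac{|z+i|^2 - |z-i|^2}{|z+i|^2} = \frac{4y}{|z+i|^2},
\]
and, using $e^{i\theta} = -(t-i)/(t+i)$ together with algebraic manipulation of the numerator,
\[
e^{i\theta} - w = \frac{2i(z-t)}{(t+i)(z+i)},
\qquad
|e^{i\theta}-w|^2 = \frac{4\bigl((x-t)^2 + y^2\bigr)}{(1+t^2)\,|z+i|^2}.
\]
Combining these with the Poisson kernel identity $\operatorname{Re}\!\left(\tfrac{e^{i\theta}+w}{e^{i\theta}-w}\right) = \tfrac{1-|w|^2}{|e^{i\theta}-w|^2}$ yields
\[
\frac{1}{2\pi}\operatorname{Re}\!\left(\frac{e^{i\theta}+w}{e^{i\theta}-w}\right) d\theta
=
\frac{1}{2\pi}\cdot\frac{y(1+t^2)}{(x-t)^2+y^2}\cdot\frac{2\,dt}{1+t^2}
=
\frac{1}{\pi}\cdot\frac{y}{(x-t)^2+y^2}\, dt,
\]
i.e.\ the Poisson kernel of the disk at $w$ pushes forward under $\varphi$ to the Poisson kernel of $\R_+^2$ at $z$.

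Finally, substituting this identity into the definition of $\mathbb{P}_{\mathbb{D}}[\tilde F \circ \varphi](w)$ and applying the change of variables $t = \tan(\theta/2)$ (which maps a full period onto $\R$ bijectively up to a null set) gives
\[
\mathbb{P}_{\mathbb{D}}[\tilde F \circ \varphi](\varphi^{-1}(z))
=
\frac{1}{\pi}\int_\R \frac{y}{(x-t)^2+y^2}\, \tilde F(t)\, dt
=
\mathbb{P}_{\R_+^2}[\tilde F](z),
\]
which is the claim. The whole argument is essentially routine bookkeeping; the only mildly delicate point is checking the $1+t^2$ factors cancel correctly between the Jacobian and the disk Poisson kernel, which is exactly the content of the computation above.
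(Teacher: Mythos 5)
Your computation is correct and is exactly the "simple change of variables" the paper alludes to (the lemma is stated without proof there): the boundary correspondence $t=\tan(\theta/2)$, the identities $1-|w|^2=4y/|z+i|^2$ and $|e^{i\theta}-w|^2=4((x-t)^2+y^2)/((1+t^2)|z+i|^2)$, and the cancellation of the $1+t^2$ factors all check out. No issues.
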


This change of variables is going to allow us to transfer the following result about Hardy spaces on the unit disk. 

\begin{lemma}[Theorem II.4.1 in \cite{garnett}]\label{thm:thmII.4.1_Garnett}
If $0<p \leq \infty$ and if $f(z) \in H^p(\mathbb{D})$ with $f(z)\neq 0$ for every $z\in \mathbb{D}$, then
\[
\frac{1}{2 \pi} \int_{\mathbb{T}} \log{|f(e^{i \theta})|}\, d\theta
>
-\infty.
\]
If $f\left(z_0\right) \neq 0$ for some $z_0\in\mathbb{D}$, then
\[
\log \left|f\left(z_0\right)\right| 
\leq
\mathbb{P}_{\mathbb{D}}[\log{|f|}](z_{0}).
\]
\end{lemma}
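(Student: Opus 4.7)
The plan is to exploit that $\log|f|$ is harmonic in $\mathbb{D}$ when $f$ is zero-free---since $\mathbb{D}$ is simply connected, a single-valued holomorphic branch of $\log f$ exists, so $\log|f|=\operatorname{Re}(\log f)$ is harmonic---then apply the Poisson representation on each concentric disk $\mathbb{D}_r=\{|z|<r\}$ with $|z_0|<r<1$, and finally pass to the limit $r\to 1^{-}$. Harmonicity on $\overline{\mathbb{D}_r}$ gives
\[
\log|f(z_0)|
=
\frac{1}{2\pi}\int_{0}^{2\pi} P_r(z_0,\theta)\, \log|f(re^{i\theta})|\, d\theta,
\qquad
P_r(z_0,\theta)\coloneqq \frac{r^2-|z_0|^2}{|re^{i\theta}-z_0|^2},
\]
and $P_r(z_0,\cdot)\to P_1(z_0,\cdot)$ uniformly on $[0,2\pi]$ as $r\to 1^{-}$, with $P_1$ bounded away from $0$ and $\infty$ on $\mathbb{T}$ for fixed $z_0\in\mathbb{D}$.

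Next I split $\log=\log^{+}-\log^{-}$. The elementary one-sided bound $\log^{+} x\le x^p/p$ for $p\in(0,\infty)$ (and $\log^{+} x\le \log^{+}\|f\|_{\infty}$ for $p=\infty$), combined with the definition of $H^p(\mathbb{D})$, yields the uniform estimate
\[
\sup_{0<r<1}\int_{0}^{2\pi}\log^{+}|f(re^{i\theta})|\, d\theta
\le
\tfrac{1}{p}\|f\|_{H^p}^p
<
\infty.
\]
Since $P_r(z_0,\cdot)$ is uniformly bounded in $r$ (for $r$ close to $1$), rearranging the Poisson identity shows that $\int P_r\log^{-}|f_r|\, d\theta$ is also uniformly bounded in $r$. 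Fatou's lemma applied to the nonnegative integrands $P_r(z_0,\theta)\log^{-}|f(re^{i\theta})|$, using the a.e.\ non-tangential convergence $f(re^{i\theta})\to f(e^{i\theta})$ (Fatou's theorem for $H^p$), yields
\[
\int_{0}^{2\pi}P_1(z_0,\theta)\log^{-}|f(e^{i\theta})|\, d\theta
<
\infty.
\]
Because $P_1(z_0,\cdot)$ is bounded below by a positive constant on $\mathbb{T}$, this is equivalent to $\int_{\mathbb{T}}\log^{-}|f(e^{i\theta})|\, d\theta<\infty$, which is the first claim.

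For the inequality I rewrite the Poisson identity as
\[
\log|f(z_0)|
=
\frac{1}{2\pi}\int_0^{2\pi} P_r(z_0,\theta)\log^{+}|f_r|\, d\theta
-
\frac{1}{2\pi}\int_0^{2\pi} P_r(z_0,\theta)\log^{-}|f_r|\, d\theta,
\]
apply Fatou to the $\log^{-}$ term to obtain $\liminf_{r\to 1}\int P_r\log^{-}|f_r|\ge \int P_1\log^{-}|f|$, and claim that $\lim_{r\to 1}\int P_r\log^{+}|f_r|=\int P_1\log^{+}|f|$; combining these gives
\[
\log|f(z_0)|
\le
\frac{1}{2\pi}\int_0^{2\pi} P_1(z_0,\theta)\log|f(e^{i\theta})|\, d\theta
=
\mathbb{P}_{\mathbb{D}}[\log|f|](z_0).
\]
The main obstacle is justifying $\lim_{r\to 1}\int P_r\log^{+}|f_r|=\int P_1\log^{+}|f|$, which requires uniform integrability of $\{\log^{+}|f_r|\}_{r<1}$ on $\mathbb{T}$. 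For $p<\infty$ I would deduce it from the $L^1(\mathbb{T})$-convergence $|f_r|^p\to |f|^p$ (a consequence of the $H^p$ boundary behavior plus Scheff\'e's lemma), used together with the domination $\log^{+}|f_r|\le |f_r|^p/p$; for $p=\infty$ the uniform bound on $\log^{+}|f_r|$ is trivial and dominated convergence applies directly.
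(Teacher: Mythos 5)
This lemma is quoted verbatim from Garnett (Theorem II.4.1) and the paper offers no proof of it, so there is nothing internal to compare against; judged on its own, your argument is correct and is essentially the standard textbook proof. Using the zero-free hypothesis to write $\log|f|=\operatorname{Re}(\log f)$ as a genuinely harmonic function, applying the exact Poisson identity on $\mathbb{D}_r$, controlling $\log^{+}$ via $\log^{+}x\le x^{p}/p$ and the $H^{p}$ bound, and then playing Fatou on the $\log^{-}$ part against uniform integrability of the $\log^{+}$ part (via $|f_r|^p\to|f^\ast|^p$ in $L^1$, which for zero-free $f$ reduces cleanly to the $H^2$ theory applied to $f^{p/2}$) is exactly the classical route, and all the limit interchanges are justified as you indicate. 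One remark worth recording: in the proof of Lemma \ref{thm:teoremaGarnettAdaptado} the paper applies the second inequality to $F\circ\varphi\in H^1(\mathbb{D})$, which is \emph{not} assumed zero-free, only nonvanishing at the single point $w_0$; your argument as written does not cover that case, since harmonicity of $\log|f|$ fails at zeros. The fix is minimal and standard: $\log|f|$ is subharmonic for any holomorphic $f$, so the Poisson identity on $\mathbb{D}_r$ becomes the inequality $\log|f(z_0)|\le\frac{1}{2\pi}\int P_r(z_0,\theta)\log|f(re^{i\theta})|\,d\theta$, and every subsequent step of your limit argument goes through verbatim (the uniform bound on $\int P_r\log^{-}|f_r|$ still only needs $\log|f(z_0)|>-\infty$). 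It would be worth stating the lemma, and running your proof, at that level of generality.
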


We want an analogue for the upper half-plane. It is stated for Lebesgue measure in page 62 of \cite{garnett}, and here we extend it to measures in $A_p(\R)$.

\begin{lemma}\label{thm:teoremaGarnettAdaptado}
    Let $w\in A_p(\mathbb{R})$ and $F\in H^p(\R_{+}^{2},w)$. If $F(x,t)\neq 0$ for a given $(x,t)\in\R_{+}^{2}$, then
    \[
    \log{|F(x,t)|}
    \leq 
    P_t\ast \log{|F|}(x),
    \]
    where $|F|$ on the right-hand side denotes the non-tangential limit of $|F(x,t)|$.
\end{lemma}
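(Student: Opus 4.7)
The plan is to reduce to the disk setting, where Lemma \ref{thm:thmII.4.1_Garnett} is directly available, by transferring $F$ through the conformal map $\varphi\colon \mathbb{D}\to \R_{+}^{2}$ from \eqref{eq:conformalDiskToPlane}. Set $G\coloneqq F\circ\varphi$; then $G$ is holomorphic on $\mathbb{D}$, and $G(w_{0})=F(x,t)\neq 0$ where $w_{0}\coloneqq \varphi^{-1}(x+it)$.

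The main step is to verify that $G$ belongs to some (unweighted) Hardy space on $\mathbb{D}$, for which $H^{1}(\mathbb{D})$ will suffice. Since $\varphi$ is conformal, non-tangential approach regions in $\R_{+}^{2}$ are carried to non-tangential approach regions in $\mathbb{D}$ (up to a change of aperture). Writing $\mathcal{M}_{\mathbb{D}}$ for the non-tangential maximal operator on $\mathbb{D}$, this gives a pointwise comparison
\[
\mathcal{M}_{\mathbb{D}}(G)(e^{i\theta})
\lesssim
\mathcal{M}_{\alpha}(F)(\tan(\theta/2)).
\]
Under the substitution $x=\tan(\theta/2)$, which gives $d\theta=\frac{2\,dx}{1+x^{2}}$, and using $\frac{1}{1+x^{2}}\lesssim \frac{1}{1+|x|}$, one obtains
\[
\int_{-\pi}^{\pi}\mathcal{M}_{\mathbb{D}}(G)(e^{i\theta})\,d\theta
\lesssim
\int_{\R}\frac{\mathcal{M}_{\alpha}(F)(x)}{1+|x|}\,dx
<\infty,
\]
where the finiteness follows from $\mathcal{M}_{\alpha}(F)\in L^{p}(w)$ and Lemma \ref{lemma:estimateNeumannIntegral}. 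Hence $G\in H^{1}(\mathbb{D})$.

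Since $G$ is not identically zero, standard Hardy space theory (equivalently, Lemma \ref{thm:thmII.4.1_Garnett} applied to the outer factor of $G$) gives $\log|G|\in L^{1}(\mathbb{T})$, and the second part of Lemma \ref{thm:thmII.4.1_Garnett} at $w_{0}$ yields
\[
\log|F(x,t)|
=
\log|G(w_{0})|
\leq
\mathbb{P}_{\mathbb{D}}[\log|G|](w_{0}).
\]
Non-tangential boundary limits transfer through $\varphi$, so $\log|G|(e^{i\theta})=\log|F|(\tan(\theta/2))$ a.e.\ on $\mathbb{T}$, and the same change of variable turns $\log|G|\in L^{1}(\mathbb{T})$ into $\int_{\R}\bigl|\log|F(x)|\bigr|/(1+x^{2})\,dx<\infty$. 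Hence Lemma \ref{lemma:equivalencePoissonIntegralsBis} applies to $\tilde{F}=\log|F|$ and identifies
\[
\mathbb{P}_{\mathbb{D}}[\log|G|](w_{0})
=
\mathbb{P}_{\R_{+}^{2}}[\log|F|](x+it)
=
P_{t}\ast\log|F|(x),
\]
which combined with the previous inequality gives the claim. The main obstacle is the Hardy-space membership of $G$ on the disk; this is precisely where the $A_{p}$-hypothesis enters, through Lemma \ref{lemma:estimateNeumannIntegral}.
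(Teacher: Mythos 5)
Your overall strategy (transfer to the disk via the Cayley map, apply Lemma \ref{thm:thmII.4.1_Garnett}, transfer back with Lemma \ref{lemma:equivalencePoissonIntegralsBis}) is exactly the paper's, and everything from ``$G\in H^{1}(\mathbb{D})$'' onwards is correct. The gap is in how you establish $G\in H^{1}(\mathbb{D})$: the pointwise comparison $\mathcal{M}_{\mathbb{D}}(G)(e^{i\theta})\lesssim \mathcal{M}_{\alpha}(F)(\tan(\theta/2))$ is false. Every Stolz region at $e^{i\theta}$ contains a fixed neighbourhood of the origin, so its image under $\varphi$ contains a fixed neighbourhood of $i$; but $i\in\Gamma_{\alpha}(x_0)$ only when $|x_0|<\tan\alpha$. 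Concretely, a computation with the Cayley transform shows that the image of a Stolz angle at $e^{i\theta}$ is comparable to $\{z:\ |z-x_0|\lesssim \operatorname{Im}(z)\sqrt{1+x_0^2}/|z+i|\}$ with $x_0=\tan(\theta/2)$, which is a cone of fixed aperture only near the vertex; away from the vertex it sweeps back to $i$, and that part is not controlled by $\mathcal{M}_{\alpha}(F)(x_0)$. Hence $\mathcal{M}_{\mathbb{D}}(G)(e^{i\theta})\geq |G(0)|=|F(i)|$ for every $\theta$, whereas $\mathcal{M}_{\alpha}(F)(x_0)$ may tend to $0$: for $F(z)=(z+i)^{-1}\in H^{2}(\R_{+}^{2})$ one has $\mathcal{M}_{\alpha}(F)(x_0)\simeq |x_0|^{-1}$, so no choice of aperture or multiplicative constant makes your inequality hold as $\theta\to\pm\pi$.

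The conclusion $G\in H^{1}(\mathbb{D})$ is nevertheless true, and the paper reaches it without maximal functions: since $F\in H^{p}(\R_{+}^{2},w)$, Lemma I.9 of \cite{GarciaCuerva} gives the harmonic majorant $|F|\leq u\coloneqq \mathbb{P}_{\R_{+}^{2}}[|F|]$; then $|G|\leq u\circ\varphi$ with $u\circ\varphi$ harmonic on $\mathbb{D}$, so the mean value property yields
\[
\sup_{0<r<1}\frac{1}{2\pi}\int_{\mathbb{T}}|G(re^{i\theta})|\, d\theta
\leq
(u\circ\varphi)(0)
=
u(0,1)
=
\frac{1}{\pi}\int_{\R}\frac{|F|(t)}{1+t^{2}}\, dt
<\infty,
\]
the last integral being finite because $|F|\in L^{p}(\R,w)$ with $w\in A_{p}$ --- this is where the $A_{p}$ hypothesis actually enters, essentially through the same estimate you invoke via Lemma \ref{lemma:estimateNeumannIntegral}. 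If you replace your maximal-function step by this majorant argument, the rest of your proof goes through verbatim.
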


\begin{proof}
    Since $F\in H^p(\R_{+}^{2},w)$, by Lemma I.9 in \cite{GarciaCuerva} we have
    \[
    |F(x,t)| 
    \leq 
    \mathbb{P}_{\R_{+}^{2}}[F](x,t).
    \]
    Denote $u(x,t):= \mathbb{P}_{\R_{+}^{2}}[F](x,t)$. Since $|F|\in L^p(\R,w)$ with $w\in A_p$, $u$ is a well-defined harmonic function on $\R_{+}^{2}$, and hence $(u\circ\varphi)$ is harmonic on $\mathbb{D}$. Therefore, using the above bound and applying the mean value property for harmonic functions, we obtain
    \begin{align*}
        \|F\circ\varphi\|_{H^1(\mathbb{D})}
        \leq 
        \sup_{r\in (0,1)}\frac{1}{2\pi}\int_{\mathbb{T}}(u\circ\varphi)(re^{i\theta})\, d\theta 
        =
        \sup_{r\in (0,1)}(u\circ\varphi)(0)
        =
        (u\circ\varphi)(0).
    \end{align*}
    Since $\varphi(0)=i$, we have
    \begin{align*}
        (u\circ\varphi)(0)
        =
        u(0,1)
        =
        \frac{1}{\pi}\int_{\R}\frac{|F|(t)}{1+t^2}\, dt
        \lesssim 
        \|F\|_{L^p(\R,w)}
        \leq 
        \|F\|_{H^p(\R_{+}^{2},w)}
        <
        \infty,
    \end{align*}
    where the bound of the integral follows from formula (3.10) in \cite{CNO}. Since $F\circ\varphi\in H^1(\mathbb{D})$ and $(F\circ\varphi)(w_0)\neq 0$ for $w_0\coloneqq \varphi^{-1}(x,t)$, we can apply Theorem \ref{thm:thmII.4.1_Garnett}, obtaining
    \[
    \log{|F\circ\varphi(w_0)|}
    \leq 
    P_{\mathbb{D}}[\log{(F\circ\varphi)}](w_0).
    \]
    Composing with $\varphi^{-1}$
    and using Lemma \ref{lemma:equivalencePoissonIntegralsBis}, we have
    \[
    \log{|F(x,t)|}
    \leq 
    \mathbb{P}_{\mathbb{D}}[\log{|(F\circ\varphi)|}]\circ(\varphi^{-1})(x,t)
    =
    \mathbb{P}_{\R_{+}^{2}}[\log{|F|}](x,t).\qedhere
    \]
\end{proof}

\subsection{The Neumann integral}

Just as the Poisson integral is the standard solution to the Dirichlet problem in $\R_{+}^{2}$, the solution for the Neumann problem is the so-called Neumann integral \cite{ArmitageHalfSpace}.

\begin{definition}\label{def:neumannIntegral}
    If $f$ satisfies $\int_{\R}\frac{|f(x)|}{1+|x|}\, dx<\infty$, we define the Neumann integral of $f$ as
    \begin{equation}\label{eq:NeumannIntegral}
    u_{f,N}\left(y_1, y_2\right)
    \coloneqq 
    -\frac{1}{\pi} \int_{\mathbb{R}} \log \left(\frac{\sqrt{(y_1-x)^2+y_2^2}}{1+|x|}\right) f(x) \, dx, \quad (y_1, y_2) \in \R_{+}^2. 
    \end{equation}
    \end{definition}
    It is known  (\cite{CNO}) that $u_{f,N}$ is a well-defined harmonic function with gradient given pointwise by
    \begin{equation}\label{eq:gradientNeumannIntegral}
    \nabla u_{f,N}(y_1,y_2)
    =
    (-Q_{y_2}\ast f(y_1),-P_{y_2}\ast f(y_1)),
    \quad (y_1,y_2)\in \R_{+}^{2},    
    \end{equation}
    and hence, it is a solution to the Neumann problem in $\R_{+}^{2}$ with datum $f$.
    
For the estimates that we need, the key result to control $\mathcal{M}_{\alpha}(\nabla u_{f,N})$ is the following.

\begin{lemma}\label{thm:sparseDominationNonTangential}
    Let $f \in L^1(\mathbb{R})$, let $u_{f,N}$ be as in \eqref{eq:NeumannIntegral}. Then, there exists $\eta\in (0,1)$ such that for every $f\in L^1(\R)$ there is an $\eta$-sparse family $\mathcal{S}$ such that 
    \[
    \mathcal{M}_{\alpha}(\nabla u_{f,N})\lesssim 
    \mathcal{A}_{\mathcal{S}}f.
    \]
    As a consequence, the operator $f\mapsto \mathcal{M}_{\alpha}(\nabla u_{f,N})$ is bounded from $L^{p}(w)\to L^{p}(w)$ when $p\in (1,\infty)$ and $w\in A_p$, and from $L^1(w)\to L^{1,\infty}(w)$ if $w\in A_1$.
\end{lemma}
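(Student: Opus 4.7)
The plan is to reduce the problem to sparse domination of the Poisson and conjugate Poisson integrals separately, and then invoke the weighted boundedness of sparse operators stated in the preceding subsection. By the explicit formula \eqref{eq:gradientNeumannIntegral}, we have
\[
|\nabla u_{f,N}(y_1,y_2)| \leq |P_{y_2}\ast f(y_1)| + |Q_{y_2}\ast f(y_1)|,
\]
so it suffices to sparse-dominate $\mathcal{M}_\alpha(P_y\ast f)$ and $\mathcal{M}_\alpha(Q_y\ast f)$ individually. For the Poisson piece, Theorem \ref{thm:Stein_thm1Page197} yields at once the pointwise bound $\mathcal{M}_\alpha(P_y\ast f)(x) \lesssim \mathcal{M}_{hl}f(x)$, and $\mathcal{M}_{hl}$ itself admits pointwise sparse domination by a standard theorem.

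The conjugate Poisson piece is the essential step and calls for a Cotlar-type argument. Splitting
\[
Q_{y_2}(y_1-t)=\frac{1}{\pi}\frac{y_1-t}{(y_1-t)^2+y_2^2}
\]
according to whether $|y_1-t|>y_2$ or $|y_1-t|\leq y_2$, and comparing the first region with the truncated Hilbert kernel $\frac{1}{\pi(y_1-t)}$, standard annular estimates (together with the fact that $\mathcal{M}_{hl}$ controls nontangential averages over cones with aperture $\alpha$) yield the pointwise bound
\[
\mathcal{M}_\alpha(Q_y\ast f)(x)\lesssim H^{*}f(x)+\mathcal{M}_{hl}f(x),
\]
where $H^{*}f(x)=\sup_{\varepsilon>0}\bigl|\int_{|x-t|>\varepsilon}f(t)/(x-t)\,dt\bigr|$ is the maximal Hilbert transform. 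By the sparse-domination theorems of Lerner and Lacey applied to the Calder\'on--Zygmund operator $H$ (and to $\mathcal{M}_{hl}$), there exists a universal $\eta\in(0,1)$ so that, for each $f$, one can choose an $\eta$-sparse family $\mathcal{S}$ with
\[
H^{*}f(x)+\mathcal{M}_{hl}f(x)\lesssim \mathcal{A}_{\mathcal{S}}(|f|)(x)\quad\text{a.e.}
\]
Combining these inequalities yields the asserted sparse domination of $\mathcal{M}_\alpha(\nabla u_{f,N})$.

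The weighted consequences are then immediate: the $L^p(w)\to L^p(w)$ estimate for $p\in(1,\infty)$ and $w\in A_p$ follows directly from the sparse-operator theorem quoted before the lemma, while the $L^1(w)\to L^{1,\infty}(w)$ estimate for $w\in A_1$ uses the standard weak-$(1,1)$ endpoint behaviour of sparse operators against $A_1$ weights. The main obstacle in this scheme is the Cotlar-type reduction of the conjugate Poisson piece: the non-radial and sign-changing character of $Q_{y_2}$ prevents a direct comparison with $\mathcal{M}_{hl}$, forcing the passage through $H^{*}$. Once that reduction is in place, the Poisson piece and the weighted step are essentially off-the-shelf.
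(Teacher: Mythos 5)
Your proposal is correct in outline, and it takes a genuinely different route from the paper, which does not argue the lemma at all here but simply cites the direct (roughly eight-page) sparse-domination argument of \cite{CNO}. Your reduction $|\nabla u_{f,N}|\leq |P_{y_2}\ast f|+|Q_{y_2}\ast f|$, the bound $\mathcal{M}_{\alpha}(P_y\ast f)\lesssim \mathcal{M}_{hl}f$, and the Cotlar-type comparison of $Q_{y_2}(y_1-\cdot)$ for $(y_1,y_2)\in\Gamma_{\alpha}(x)$ with the Hilbert kernel truncated at scale $y_2$ about $x$ (error $O(\mathcal{M}_{hl}f(x))$) are all sound, and Lerner--Lacey sparse domination does cover the maximal truncation $H^{*}$. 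Two points should be made explicit in a write-up. First, you produce several sparse families (one for $H^{*}$, one per dyadic grid for $\mathcal{M}_{hl}$); to obtain the single $\eta$-sparse family of the statement, invoke the standard fact that a finite union of sparse families is sparse with a worse constant, or note that the weighted consequences only need each family separately. Second, the weighted consequences concern $f\in L^p(w)$, which need not lie in $L^1(\R)$; \cite{CNO} handles this by a density argument, whereas in your scheme it is cleaner to observe that the pointwise bound by $H^{*}f+\mathcal{M}_{hl}f$ holds whenever $\int_{\R}|f(x)|(1+|x|)^{-1}\,dx<\infty$ (guaranteed by Lemma \ref{lemma:estimateNeumannIntegral}), so the strong and weak type bounds follow from the classical weighted theory of $H^{*}$ and $\mathcal{M}_{hl}$ with no limiting argument. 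Your approach buys brevity and off-the-shelf inputs; the direct argument of \cite{CNO} delivers the single-family pointwise sparse bound in exactly the form quoted later in the paper.
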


\begin{proof}
    This is contained in \cite{CNO}. For the first part, see pages 28-36. For the second part, see the density argument done in pages 36-37.
\end{proof}

\subsubsection{The class \texorpdfstring{$\mathcal{S}_{p}^{\mathcal{R}}$}{}}

In order to obtain norm estimates for $ \mathcal{M}_{\alpha}(\nabla u_{f,N})$ when we deal with $L^{p,1}(\Lambda,\nu)$-solvability, we need to introduce the following class of weights.  

\begin{definition}\label{def:SpR}
        We define $\mathcal{S}_p^{\mathcal{R}}$ as the class of ordered pairs of weights $(u,v)$ such that
        \begin{equation}\label{eq:twoWeightBound}
        \left\|\frac{\mathcal{A}_{\mathcal{S}}(fu)}{u}\right\|_{L^{p,\infty}(\R,v)}
        \lesssim 
        \|f\|_{L^{p,1}(\R,v)},\quad  \forall f\in L^{p,1}(\R,v),   
        \end{equation}
        for every sparse family $\mathcal{S}$. The implicit constant may depend on the sparseness constant $\eta\in (0,1)$.
\end{definition}

\begin{remark}\label{remark:SpFuerte}
We observe  that  if we define the class $\mathcal{S}_p$ by the condition 
\[
\left\|\frac{\mathcal{A}_{\mathcal{S}}(fu)}{u}\right\|_{L^{p}(\R,v)}
\lesssim 
\|f\|_{L^{p}(\R,v)},\quad  \forall f\in L^{p}(\R,v),  
\]
then it is easy to see that 
\[
(u,v)\in \mathcal{S}_p \quad\iff\quad  u^{-p}v\in A_p.
\]
However, the complete characterization of the class $\mathcal{S}_p^{\mathcal{R}}$ is a hard open problem in weighted theory.    
\end{remark}

Some concrete cases can be derived easily from previously known results. We state a few of them.

\begin{proposition}\label{prop:dualitySpR} Let $u,v$ be weights. Then:

\begin{enumerate}

\item $(u,v)\in \mathcal{S}_p^{\mathcal{R}}$ if and only if $(u^{-1}v,v)\in \mathcal{S}_{p'}^{\mathcal{R}}$.

\item $(1,v)\in \mathcal{S}_{p}^{\mathcal{R}}$ if and only if $u\in A_p^{\mathcal{R}}$.

\item $(u,u)\in \mathcal{S}_p^{\mathcal{R}}$ if and only if $u\in A_{p'}^{\mathcal R}$.

\end{enumerate}
\end{proposition}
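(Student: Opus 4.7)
The plan is to take (1) as the core of the proposition and then derive (2) and (3) from it. Part (2) is immediate from the definitions: the statement $(1,v)\in\mathcal{S}_p^{\mathcal{R}}$ unfolds to the bound
$\|\mathcal{A}_{\mathcal{S}}f\|_{L^{p,\infty}(v)}\lesssim\|f\|_{L^{p,1}(v)}$
uniformly over $\eta$-sparse families $\mathcal{S}$, which by the characterization recalled after Definition \ref{def:SpR} is exactly $v\in A_p^{\mathcal{R}}$ (the statement of (2) contains the evident typo ``$u\in A_p^{\mathcal R}$''). Part (3) then follows at once from (1) and (2): specializing (1) to $(u,v)=(u,u)$ gives $(u,u)\in\mathcal{S}_p^{\mathcal{R}} \Leftrightarrow (u^{-1}u,u)=(1,u)\in\mathcal{S}_{p'}^{\mathcal{R}}$, and by (2) the latter is equivalent to $u\in A_{p'}^{\mathcal{R}}$.

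The real work is in (1). Fix an $\eta$-sparse family $\mathcal{S}$ and consider the linear operator $T_u f\coloneqq u^{-1}\mathcal{A}_{\mathcal{S}}(fu)$. The strategy is to identify the $v\,dx$-formal adjoint of $T_u$ as $T_{u^{-1}v}$ and then conclude by Lorentz-space duality. Using the manifest self-adjointness of $\mathcal{A}_{\mathcal{S}}$ with respect to Lebesgue measure (read off from its expression as a sum of averages), a direct computation with the pairing $\langle F,G\rangle_v\coloneqq\int FG\,v\,dx$ gives
\[
\langle T_u f,g\rangle_v
=\int \mathcal{A}_{\mathcal{S}}(fu)\,\tfrac{gv}{u}\,dx
=\int fu\cdot\mathcal{A}_{\mathcal{S}}\!\left(\tfrac{gv}{u}\right)dx
=\left\langle f,\ \tfrac{u}{v}\,\mathcal{A}_{\mathcal{S}}\!\left(\tfrac{gv}{u}\right)\right\rangle_v,
\]
so, writing $\widetilde{u}\coloneqq u^{-1}v$, the adjoint is $T_u^{\ast}g=\widetilde{u}^{\,-1}\mathcal{A}_{\mathcal{S}}(g\widetilde{u})=T_{\widetilde{u}}g$. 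Note also that the sparseness constant of $\mathcal{S}$ does not change under this adjoint.

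The final step is to invoke Köthe/associate-space duality for the Lorentz pair on $(\mathbb{R},v\,dx)$: the associate space of $L^{p,1}(v)$ is $L^{p',\infty}(v)$ and the associate space of $L^{p,\infty}(v)$ is $L^{p',1}(v)$, with equivalent norms. Since $T_u$ is positive, I expect to obtain
\[
\|T_u\|_{L^{p,1}(v)\to L^{p,\infty}(v)}
\simeq
\|T_{\widetilde{u}}\|_{L^{p',1}(v)\to L^{p',\infty}(v)}
\]
with constants independent of $\mathcal{S}$, and taking the supremum over $\eta$-sparse families delivers (1). The main point to handle carefully is that the topological dual of $L^{p,\infty}(v)$ is strictly larger than $L^{p',1}(v)$, so one cannot directly quote Banach-space adjoint theory. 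I would bypass this via the Hunt-type testing identity
\[
\|F\|_{L^{p,\infty}(v)}\simeq \sup\!\left\{\int|F|\,g\,v\,dx:\ g\geq 0,\ \|g\|_{L^{p',1}(v)}\leq 1\right\},
\]
which follows from the Köthe reflexivity of Lorentz spaces; combined with the positivity and self-adjointness of $\mathcal{A}_{\mathcal{S}}$ this yields the stated equivalence with uniform constants over all sparse families, completing (1) and hence the proposition.
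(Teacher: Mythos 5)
Your proposal is correct and follows essentially the same route as the paper: part (1) by duality using the self-adjointness (with respect to Lebesgue measure) of sparse operators, part (2) from the known equivalence $\mathcal{A}_{\mathcal{S}}\colon L^{p,1}(w)\to L^{p,\infty}(w)\iff w\in A_p^{\mathcal{R}}$, and part (3) by combining the two. The paper compresses (1) into one line ("follows by duality since sparse operators are self-adjoint"), whereas you correctly supply the missing care about K\"othe/associate-space duality for $L^{p,\infty}(v)$; you also rightly note the typo $u$ for $v$ in item (2).
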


\begin{proof} (1) follows by duality since sparse operators are self-adjoint. 

\noindent 
(2) It is known that $\mathcal{A}_{\mathcal{S}}\colon L^{p,1}(u)\to L^{p,\infty}(u)$ if $u\in A_{p}^{\mathcal{R}}$; see the proof of Corollary 3.2 in \cite{carroDomingoSalazar}. 

\noindent 
(3)  It follows from (1) and (2).\qedhere

\end{proof}

A more novel result is the following sufficient codition for $\mathcal{S}_p^{\mathcal{R}}$, that we will use to construct examples in Section \ref{sec:examplesLp1}. It generalizes items (2) and (3) in Proposition \ref{prop:dualitySpR}.

\begin{theorem}\label{thm:sufficientConditionSpR}
    Assume $u$ and $v$ are weights, with $v\in A_{\infty}$, such that
    \begin{equation}\label{eq:necessaryWeakConditionSpR}
    \sup_{Q}\displaystyle\frac{u(Q)\left(\frac{v}{u}\right)(Q)}{|Q|v(Q)}
    <
    \infty,    
    \end{equation}
    and such that
    \begin{equation}\label{eq:sufficientConditionSpR}
    \frac{v}{u}\in A_p(u)\text{ and } u\in A_{p'}^{\mathcal{R}}\left(\frac{v}{u}\right),\quad \text{ or }\quad  \frac{v}{u}\in A_p^{\mathcal{R}}(u) \text{ and } u\in A_{p'}\left(\frac{v}{u}\right).        
    \end{equation}
    Then $(u,v)\in \mathcal{S}_p^{\mathcal{R}}$.
\end{theorem}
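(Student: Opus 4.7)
I would prove the first alternative in \eqref{eq:sufficientConditionSpR} and derive the second by the duality from Proposition~\ref{prop:dualitySpR}(1), which gives $(u,v)\in\mathcal{S}_p^{\mathcal{R}}\iff(v/u,v)\in\mathcal{S}_{p'}^{\mathcal{R}}$; this swap leaves \eqref{eq:necessaryWeakConditionSpR} invariant and interchanges the two alternatives of \eqref{eq:sufficientConditionSpR}. So assume $v/u\in A_p(u)$ and $u\in A_{p'}^{\mathcal{R}}(v/u)$. By the Lorentz duality $(L^{p',1}(v))^{\ast}=L^{p,\infty}(v)$ and the positivity of $\mathcal{A}_{\mathcal{S}}$, the target bound \eqref{eq:twoWeightBound} is equivalent to the bilinear estimate
\[
\int_{\R}\mathcal{A}_{\mathcal{S}}(fu)\,\frac{gv}{u}\,dx\lesssim \|f\|_{L^{p,1}(v)}\|g\|_{L^{p',1}(v)},\qquad f,g\geq 0.
\]

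Next, expanding the sparse sum and applying \eqref{eq:necessaryWeakConditionSpR} in the form $|Q|^{-1}\lesssim v(Q)/\bigl(u(Q)(v/u)(Q)\bigr)$ yields
\[
\int \mathcal{A}_{\mathcal{S}}(fu)\,\frac{gv}{u}\,dx
=\sum_{Q\in\mathcal{S}}\frac{1}{|Q|}\Bigl(\int_Q fu\Bigr)\Bigl(\int_Q g(v/u)\Bigr)
\lesssim\sum_Q\langle f\rangle_{Q,u}\langle g\rangle_{Q,v/u}\,v(Q),
\]
where $\langle h\rangle_{Q,\mu}\coloneqq \mu(Q)^{-1}\int_Q h\,\mu\,dx$ denotes the $\mu\,dx$-average on $Q$. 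Because $\mathcal{S}$ is $\eta$-sparse with pairwise disjoint major sets $E_Q\subset Q$, $|E_Q|\geq\eta|Q|$, and $v\in A_{\infty}$, one has $v(Q)\lesssim v(E_Q)$. The disjointness of $\{E_Q\}$ (so that the sum in the integrand below has at most one nonzero term at any point $x$) then produces
\[
\sum_Q\langle f\rangle_{Q,u}\langle g\rangle_{Q,v/u}\,v(Q)
\lesssim\int_{\R}\sum_Q\mathds{1}_{E_Q}(x)\,\langle f\rangle_{Q,u}\langle g\rangle_{Q,v/u}\,v\,dx
\leq \int_{\R}\mathcal{M}_u f\cdot\mathcal{M}_{v/u}g\cdot v\,dx.
\]

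Finally I would apply Hölder's inequality in Lorentz spaces over $(\R,v\,dx)$:
\[
\int_{\R}\mathcal{M}_u f\cdot\mathcal{M}_{v/u}g\cdot v\,dx\leq \|\mathcal{M}_u f\|_{L^{p,1}(v)}\|\mathcal{M}_{v/u}g\|_{L^{p',\infty}(v)}.
\]
The second factor is bounded by $\|g\|_{L^{p',1}(v)}$ directly from the hypothesis $u\in A_{p'}^{\mathcal{R}}(v/u)$. The delicate factor is the first one: $v/u\in A_p(u)$ delivers only the strong bound $\mathcal{M}_u\colon L^p(v)\to L^p(v)$, whereas we need the Lorentz-strong bound $L^{p,1}(v)\to L^{p,1}(v)$. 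I would obtain this by self-improving the $A_p(u)$-class to $v/u\in A_{p-\delta}(u)$ for some $\delta>0$ and then using Marcinkiewicz interpolation between the strong bounds at $p-\delta$ and $p$. The main obstacle of the proof is justifying this self-improvement step when the base measure $u\,dx$ is not assumed to be doubling; here one should exploit $v\in A_\infty$ together with condition \eqref{eq:necessaryWeakConditionSpR} to reduce to the classical openness of $A_p$.
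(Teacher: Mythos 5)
Your proposal is correct and follows essentially the same route as the paper's proof: expand the sparse sum, use \eqref{eq:necessaryWeakConditionSpR} to replace $|Q|^{-1}$, use $v\in A_\infty$ to pass from $v(Q)$ to $v(E_Q)$, dominate by $\int\mathcal{M}_u f\cdot\mathcal{M}_{v/u}g\cdot v\,dx$, apply Lorentz--H\"older, and handle the factor $\|\mathcal{M}_u f\|_{L^{p,1}(v)}$ by self-improving $v/u\in A_p(u)$ to $A_{p-\varepsilon}(u)$ and interpolating with the $L^\infty$ bound. The only (cosmetic) differences are that the paper first reduces to $f=\mathds{1}_E$ and treats the second alternative ``in the same way'' rather than by the duality of Proposition \ref{prop:dualitySpR}(1), and the openness step you flag as delicate is asserted by the paper without further comment.
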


\begin{proof}
    We consider the first case: $\frac{v}{u}\in A_p(u)$ and $u\in A_{p'}^{\mathcal{R}}(\frac{v}{u})$. The other case is settled in the same way. It is also enough to prove the bound for characteristic functions, so let $f=\mathds{1}_{E}$. We argue by duality. Let $g\in L^{p',1}(v)$ with $\|g\|_{L^{p',1}(v)}=1$. We have
    \[
        \int_{\R}\frac{\mathcal{A}_{\mathcal{S}}(fu)}{u}g(x)v(x)\, dx
        =
        \sum_{Q\in\mathcal{S}}\frac{u(E\cap Q)}{|Q|}\left(\int_{Q} g\frac{v}{u}\right).
    \]
    We use that $\frac{1}{|Q|}\lesssim \frac{v(Q)}{u(Q)\left(\frac{v}{u}\right)(Q)}$ to bound
    \[
    \sum_{Q\in\mathcal{S}}\frac{u(E\cap Q)}{|Q|}\left(\int_{Q} g\frac{v}{u}\right)
    \lesssim 
    \sum_{Q\in\mathcal{S}}\frac{u(E\cap Q)}{u(Q)}\frac{1}{\left(\frac{v}{u}\right)(Q)}\left(\int_{Q}g\frac{v}{u}\, dx\right)v(Q).
    \]
    
    We have $v\in A_{\infty}(\R)$ and hence
     \[
    v(Q)  \lesssim_{\eta}
    v(E_Q).
    \]
    Therefore, using this estimate and the disjointness of the sets $E_Q$, the above sum can be bounded (up to multiplicative constants) by
    \begin{align*}
       &\sum_{Q\in\mathcal{S}}\frac{u(E\cap Q)}{u(Q)}\frac{1}{\left(\frac{v}{u}\right)(Q)}\left(\int_{Q}g\frac{v}{u}\, dx\right)v(E_Q)
        \leq 
        \int_{\R}\mathcal{M}_{u}(\mathds{1}_E)\mathcal{M}_{\frac{v}{u}}(g)v(x)\, dx
          \\&\lesssim 
        \|\mathcal{M}_{u}(\mathds{1}_{E})\|_{L^{p,1}(v)}\|\mathcal{M}_{\frac{v}{u}}(g)\|_{L^{p',\infty}(v)}
      \lesssim 
        \|\mathcal{M}_{u}(\mathds{1}_{E})\|_{L^{p,1}(v)}\|g\|_{L^{p',1}(v)}
        \lesssim 
        \|f\|_{L^{p,1}(v)},
    \end{align*}
    having used in the last step that, if $\frac{v}{u}\in A_{p}(u)$, then $\frac{v}{u}\in A_{p-\varepsilon}(u)$, so by interpolation with the $L^{\infty}$-bound, $\mathcal{M}_{u}\colon L^{p,1}(v)\to L^{p,1}(v)$.\qedhere
\end{proof}

\section{The Dirichlet problem in \texorpdfstring{$L^{p,1}(\Lambda,\nu)$}{}}

\subsection{\texorpdfstring{$L^{p}(\Lambda,\nu)$}{}-solvability}
By Theorem \ref{thm:thm4.4_Kenig}, it is clear that,  if $p_{\Phi(\nu)}>1$,  the Dirichlet problem is not solvable in $L^{p_{\Phi(\nu)}}(\Lambda, \nu)$ because $\Phi(\nu)\not\in A_{p_{\Phi}(\nu)}(\R)$. Indeed, if it were solvable, by the properties of the $A_p$-classes we would have $\Phi(\nu)\in A_{p_{\Phi}(\nu)-\varepsilon}(\R)$, contradicting the definition of $p_{\Phi}(\nu)$ as infimum.

However, it may be solvable in a smaller space than $L^{p_{\Phi}(\nu)}(\Lambda, \nu)$, for example in $L^{p_{\Phi}(\nu),1}(\Lambda,\nu)$. Theorem \ref{thm:dirichletEndpointPesos} asserts that this occurs when $\Phi(\nu)\in A_{p_{\Phi}(\nu)}^{\mathcal{R}}(\R)$. Notice that this hypothesis makes sense: since $A_{p_{\Phi}(\nu)}\subsetneq A_{p_{\Phi}(\nu)}^{\mathcal{R}}$, it may be the case that $\Phi(\nu)\in A_{p_{\Phi}(\nu)}^{\mathcal{R}}$, as we will check in concrete examples.

Before proving Theorem \ref{thm:thm4.4_Kenig}, we compute the $L^{p,1}$-norm of the boundary datum.

\begin{lemma}\label{lemma:equivalenciaNormaLp1DirichletPesoNu}
Let $g\in L^{p,1}(\Lambda,\nu)$, with $p\in [1,\infty]$. Let $\mathcal{T}_{D}(g)\coloneqq g\circ\Phi$. Then
\[
\|g\|_{L^{p,1}(\Lambda,\nu)}
=
\|\mathcal{T}_{D}(g)\|_{L^{p,1}(\R,\Phi(\nu))}.
\]
\end{lemma}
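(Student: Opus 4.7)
The plan is to observe that the Lorentz quasi-norm $\|\cdot\|_{L^{p,1}}$ depends only on the distribution function of the function with respect to the underlying measure, and that $\Phi\colon \R\to \Lambda$ transports $\Phi(\nu)$ onto $\nu$ by its very definition. So I would reduce the identity to the equality of distribution functions and then invoke the integral representation of the Lorentz norm in terms of the decreasing rearrangement.

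More concretely, I would proceed as follows. First, I would note that $\Phi\colon \R\to \Lambda$ extends to a homeomorphism between the closures (item (1) in Theorem \ref{thm:thm1.1_Kenig}), so for every $y>0$ the level set
\[
\{x\in \R: |(g\circ\Phi)(x)|>y\}
=
\Phi^{-1}\bigl(\{\xi\in \Lambda: |g(\xi)|>y\}\bigr)
\]
is a Borel set in $\R$. Second, using Definition \ref{prop:funcionDensidadPhiNu}, namely $\Phi(\nu)(E)=\nu(\Phi(E))$, and the fact that $\Phi\circ\Phi^{-1}$ is the identity on $\Lambda$, I would obtain
\[
\lambda_{g\circ\Phi}^{\Phi(\nu)}(y)
=
\Phi(\nu)\bigl(\Phi^{-1}(\{|g|>y\})\bigr)
=
\nu\bigl(\{|g|>y\}\bigr)
=
\lambda_{g}^{\nu}(y),
\]
so the two distribution functions coincide identically on $(0,\infty)$. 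Consequently the decreasing rearrangements $(g\circ\Phi)^{*}_{\Phi(\nu)}$ and $g^{*}_{\nu}$ agree, and from the formula
\[
\|f\|_{L^{p,1}(X,\mu)}
=
\frac{1}{p}\int_{0}^{\infty} f^{*}_{\mu}(t)\, t^{\frac{1}{p}-1}\, dt
\]
the claimed identity of norms follows.

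There is no real obstacle here; the only point to be careful about is the measurability of the preimage $\Phi^{-1}(\{|g|>y\})$, which is immediate from the continuity of $\Phi$ on $\overline{\R_+^2}$ restricted to $\R$. The same argument of course works for $p=\infty$ using the supremum representation of the $L^{p,\infty}$ norm, and indeed the identity extends, with the same proof, to all Lorentz norms $\|\cdot\|_{L^{p,q}}$; the statement isolates $q=1$ because that is what will be used in the proof of Theorem \ref{thm:dirichletEndpointPesos}.
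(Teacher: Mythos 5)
Your proof is correct and is exactly the change of variables the paper has in mind: the lemma is stated there without proof, and the whole content is the equimeasurability identity $\lambda_{g\circ\Phi}^{\Phi(\nu)}(y)=\lambda_{g}^{\nu}(y)$, which follows from $\Phi(\nu)\bigl(\Phi^{-1}(A)\bigr)=\nu(A)$ together with the fact (Theorem \ref{thm:thm1.1_Kenig}, items (1) and (4)) that $\Phi$ is a null-set-preserving homeomorphism, so preimages of $ds$-measurable (not just Borel) level sets remain measurable. The only quibble is your closing aside about $p=\infty$, where the relevant space is the degenerate $L^{\infty,1}$ rather than $L^{p,\infty}$; this does not affect the argument for the cases actually used.
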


\begin{proof}[Proof of Theorem \ref{thm:dirichletEndpointPesos}]
Let $p\coloneqq p_{\Phi}(\nu)$. Since $\mathcal{T}_{D}(g)\in L^{p,1}(\R,\Phi(\nu))$ with $\Phi(\nu)\in A_{p}^{\mathcal{R}}(\R)$, we have that  $\int_{\R}\frac{|f(x)|}{1+|x|}<\infty$  (\cite{CNO}). Therefore,
\[
u(y_1,y_2)
\coloneqq 
(P_{y_2}\ast \mathcal{T}_{D}(g))(y_1),\quad (y_1,y_2)\in\R_{+}^{2},
\]
is well-defined, harmonic in $\R_{+}^{2}$ and convergent non-tangentially to $\mathcal{T}_{D}(g)$ a.e. and, by Theorem \ref{thm:Stein_thm1Page197},
\[
\mathcal{M}_{\beta}u
\lesssim_{\beta}
\mathcal{M}_{hl}(\mathcal{T}_{D}g).
\] 
Thus, since $\Phi(\nu)\in A_{p}^{\mathcal{R}}$,
\[
\|\mathcal{M}_{\beta}(u)\|_{L^{p,\infty}(\R,\Phi(\nu))}
\lesssim
\|\mathcal{T}_{D}(g)\|_{L^{p,1}(\R,\Phi(\nu))}.
\]

Then $v\coloneqq u\circ\Phi^{-1}$ is harmonic in $\Omega$ and has non-tangential limit $\mathcal{T}_{D}(g)\circ\Phi^{-1}=g$, because by \eqref{eq:thm1.1_JerisonKenig} the non-tangential regions are preserved by $\Phi$ and $\Phi^{-1}$. Besides, by Theorem \ref{thm2.8_Kenig} and Remark \ref{generalizationThm2.8_Kenig}, 
\begin{equation}\label{equivalenceNonTangential1}
\|\mathcal{M}_{\alpha}(u)\|_{L^{p,\infty}(\R,\Phi(\nu))}
\simeq 
\|\mathcal{M}_{\beta}(v)\|_{L^{p,\infty}(\Lambda, \nu)}.    
\end{equation}
So
\[
\|\mathcal{M}_{\alpha}(v)\|_{L^{p,\infty}(\Lambda, \nu)}
\simeq 
\|\mathcal{M}_{\beta}(u)\|_{L^{p,\infty}(\R,\Phi(\nu))}
\lesssim 
\|\mathcal{T}_{D}(g)\|_{L^{p,1}(\R,\Phi(\nu))}
=
\|g\|_{L^{p,1}(\Lambda,\nu)}.\qedhere
\]
\end{proof}

\section{The Neumann Problem in \texorpdfstring{$L^p(\Lambda, \nu)$}{}}

In this section, we give the proof of Theorem \ref{thm:Lp_Solvability}.  The $L^p(w)$-solvability in $\R_{+}^{2}$ is the following.

\begin{theorem}[Theorem 1.3 in \cite{CNO}]\label{thm:thm1.3_CNO}
    Given $p\in (1,\infty)$, if $w \in A_p(\mathbb{R})$, then the Neumann problem in $\mathbb{R}_{+}^2$ is solvable in $X=L^p(\mathbb{R}, w)$ taking $Y=L^p(\mathbb{R}, w)$. In fact, if $f\in X$ is the datum, a solution is given by $u_{f,N}$.
\end{theorem}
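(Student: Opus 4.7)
The plan is to take $v \coloneqq u_{f,N}$ as the candidate solution and check the three requirements in Definition \ref{def:definitionSolvability} using the machinery already assembled in the preliminaries. Given $f\in L^p(\mathbb{R},w)$ with $w\in A_p$, Lemma \ref{lemma:estimateNeumannIntegral} yields $\int_{\mathbb{R}} |f(x)|/(1+|x|)\, dx<\infty$, so the Neumann integral $u_{f,N}$ in \eqref{eq:NeumannIntegral} is well-defined and harmonic in $\mathbb{R}_+^2$, and its gradient is computed explicitly in \eqref{eq:gradientNeumannIntegral}. In particular, no delicate differentiation-under-the-integral argument is needed at this stage.

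Next I would verify the boundary condition. The outer unit normal to $\partial\mathbb{R}_+^2\equiv\mathbb{R}$ is $\mathbf{n}=(0,-1)$, so \eqref{eq:gradientNeumannIntegral} gives
\[
\nabla u_{f,N}(y_1,y_2)\cdot \mathbf{n}
=
-\bigl(-P_{y_2}\ast f(y_1)\bigr)
=
P_{y_2}\ast f(y_1).
\]
By Theorem \ref{thm:Stein_thm1Page197}, this Poisson integral converges non-tangentially to $f$ almost everywhere on $\mathbb{R}$, so $v$ attains the prescribed Neumann datum in the required sense.

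It remains to prove the quantitative estimate $\|\mathcal{M}_\alpha(\nabla v)\|_{L^p(\mathbb{R},w)}\lesssim \|f\|_{L^p(\mathbb{R},w)}$. This is where all the substantive analysis is concentrated, and it is handled by Lemma \ref{thm:sparseDominationNonTangential}: for the given $f$ there exists an $\eta$-sparse family $\mathcal{S}$ (with $\eta$ independent of $f$) such that $\mathcal{M}_\alpha(\nabla u_{f,N})\lesssim \mathcal{A}_{\mathcal{S}} f$ pointwise. Since $w\in A_p(\mathbb{R})$, the sparse operator $\mathcal{A}_{\mathcal{S}}$ is bounded on $L^p(\mathbb{R},w)$ with constants depending only on $[w]_{A_p}$, $p$, and $\eta$, and applying this to both sides of the pointwise bound gives the desired norm inequality uniformly in the choice of $\mathcal{S}$.

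The hard part of the argument is entirely encapsulated in the pointwise sparse domination of $\mathcal{M}_\alpha(\nabla u_{f,N})$ by $\mathcal{A}_{\mathcal{S}} f$, which requires a careful analysis of the conjugate Poisson kernel in $-Q_{y_2}\ast f$ (the $P_{y_2}\ast f$ component is controlled directly by $\mathcal{M}_{hl}f$ via Theorem \ref{thm:Stein_thm1Page197}). Granting Lemma \ref{thm:sparseDominationNonTangential}, the remainder of the proof reduces to the routine verifications above, and the theorem follows.
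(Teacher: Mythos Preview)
Your argument is correct and is essentially the proof in \cite{CNO}: the paper under review does not prove this theorem but simply cites it, and the ingredients you invoke (Lemma \ref{lemma:estimateNeumannIntegral}, formula \eqref{eq:gradientNeumannIntegral}, Theorem \ref{thm:Stein_thm1Page197}, and Lemma \ref{thm:sparseDominationNonTangential}) are precisely the pieces of the original proof that the present paper extracts for later use. One minor remark: the pointwise sparse domination in Lemma \ref{thm:sparseDominationNonTangential} is stated only for $f\in L^1(\mathbb{R})$, so rather than applying it directly to an arbitrary $f\in L^p(\mathbb{R},w)$, you should invoke the second sentence of that lemma, which already records the $L^p(w)\to L^p(w)$ bound obtained via the density argument referenced there.
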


\begin{theorem}\label{thm:page41_CNO}
    Let $g\in L_{loc}^{1}(\Lambda)$ be a function defined on $\Lambda$. Then $u$ is a solution to the Neumann problem in $\R_{+}^{2}$ with datum $(g\circ\Phi)|\Phi'|$, i.e.,
    \begin{equation}\label{eq:ecuacionNeumannSemiplano1}
    \begin{cases}
        \Delta u=0 & \text{ in }\R_{+}^{2}\\
        \partial_{\mathbf{n}}u=(g\circ\Phi)|\Phi'| & \text{ on } \partial\R_{+}^{2},
    \end{cases}    
    \end{equation}
    if and only if $v\coloneqq v\circ\Phi^{-1}$ is a solution to the Neumann problem on $\Omega$ with datum $g$, i.e., 
    \begin{equation}\label{eq:ecuacionNeumannDominio1}
    \begin{cases}
        \Delta v=0 & \text{ in } \Omega\\
        \partial_{\mathbf{n}}v=g & \text{ on } \Lambda=\partial\Omega.
    \end{cases}
    \end{equation}
\end{theorem}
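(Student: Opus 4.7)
The plan is to verify the equivalence at the level of both the PDE and the boundary datum by a direct chain-rule computation, exploiting conformality of $\Phi$. Setting $u = v \circ \Phi$ (equivalently $v = u \circ \Phi^{-1}$), harmonicity is preserved in either direction since $\Phi$ and $\Phi^{-1}$ are conformal on the interior; this disposes of $\Delta u = 0 \iff \Delta v = 0$. The real content is therefore a pointwise identity relating $\partial_y u$ on $\R$ to $\nabla v \cdot \vec n$ on $\Lambda$, with the factor $|\Phi'|$ making its appearance.

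To derive this identity, I write $\Phi = \Phi_1 + i\Phi_2$ and apply the Cauchy--Riemann equations $\partial_x\Phi_1 = \partial_y\Phi_2 = \operatorname{Re}\Phi'$, $\partial_x\Phi_2 = -\partial_y\Phi_1 = \operatorname{Im}\Phi'$. The chain rule then gives
\[
-\partial_y u(x,y) = \operatorname{Im}\Phi'(x,y)\,\partial_1 v(\Phi(x,y)) - \operatorname{Re}\Phi'(x,y)\,\partial_2 v(\Phi(x,y)).
\]
Next, by part (5) of Theorem \ref{thm:thm1.1_Kenig}, for a.e.\ $x\in\R$ the complex number $\Phi'(x) = \operatorname{Re}\Phi'(x)+i\operatorname{Im}\Phi'(x)$ is a nonzero vector tangent to $\Lambda$ at $\Phi(x)$, and $|\arg\Phi'|\leq \arctan L$ forces $\operatorname{Re}\Phi' > 0$, which fixes its orientation to agree with that induced by $\eta$. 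Rotating $\Phi'(x)/|\Phi'(x)|$ by $-\pi/2$ therefore yields the outer unit normal
\[
\vec n(\Phi(x)) = \frac{1}{|\Phi'(x)|}\bigl(\operatorname{Im}\Phi'(x),\,-\operatorname{Re}\Phi'(x)\bigr),
\]
and combining these two displays gives the key pointwise identity
\[
\partial_{\mathbf n} u(x,0) = -\partial_y u(x,0) = |\Phi'(x)|\,(\nabla v \cdot \vec n)(\Phi(x)).
\]

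Finally, the pointwise identity must be lifted to the non-tangential-limit condition encoded in the notion of solvability. For this I invoke the preservation of non-tangential cones \eqref{eq:thm1.1_JerisonKenig} under $\Phi$ and $\Phi^{-1}$, together with the a.e.\ non-tangential existence and non-vanishing of $\Phi'$ on $\R$ (parts (2)--(3) of Theorem \ref{thm:thm1.1_Kenig}). These allow a non-tangential limit of $\nabla v$ at $\Phi(x)\in\Lambda$ to be translated into a non-tangential limit of $\operatorname{Im}\Phi'\cdot\partial_1 v - \operatorname{Re}\Phi'\cdot\partial_2 v$, and hence of $-\partial_y u$, at $x\in\R$, with matching datum. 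The only subtle point I expect is orientation: keeping careful track of signs so that the $-\pi/2$ rotation of $\Phi'$ actually gives the outer normal and not its opposite. This is handled by the constraint $\operatorname{Re}\Phi' > 0$ from part (5); the rest is routine chain-rule and cone-preservation bookkeeping.
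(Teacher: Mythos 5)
Your proposal is correct and is essentially the argument the paper itself invokes (it simply defers to the computation on page 41 of \cite{CNO}): the chain rule combined with the Cauchy--Riemann equations, the identification of the outer normal as the $-\pi/2$ rotation of $\Phi'/|\Phi'|$ (with orientation fixed by $\operatorname{Re}\Phi'>0$), and the cone-preservation property \eqref{eq:thm1.1_JerisonKenig} for both directions. The one point you and the paper both pass over quickly is that replacing the fixed normal $\vec n(\Phi(x))$ by the interior vector $\vec n_z$ built from $\Phi'(z)$ requires the gradient to stay under control along the approach region, which is harmless in all the situations where the theorem is applied (solutions with finite non-tangential maximal function of the gradient).
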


\begin{proof}
    The argument provided in page 41 of \cite{CNO} justifies that if $u$ is a solution of \eqref{eq:ecuacionNeumannSemiplano1} then $v$ is a solution of \eqref{eq:ecuacionNeumannDominio1}. The converse can be justified in the same way using \eqref{eq:thm1.1_JerisonKenig}.
\end{proof}

    Given $g\in L^p(\Lambda, \nu)$, $p\in [1,\infty)$, we define 
    \begin{equation}\label{eq:NeumannDataHalfPlane}
    \mathcal{T}_{N}(g)
    \coloneqq 
    (g\circ\Phi)|\Phi'|.
    \end{equation}
    We have that 
     \begin{equation}\label{chv}
        \|g\|_{L^p(\Lambda,\nu)}
        =
        \|\mathcal{T}_{N}(g)\|_{L^p(\R,|\Phi'|^{-p}\Phi(\nu))}.
     \end{equation}

\begin{figure}
    \centering

    \begin{tikzpicture}[scale=1]
    \fill[cyan, opacity=0.3] (5.76, 0.657) -- (5.76, 3.217) -- (9.92, 3.217) -- (9.92, 1.297) -- (9.6, 1.937) -- (8.96, 1.297) -- (7.68, 0.657) -- (7.36, 1.297) -- (6.4, 1.937) -- (5.76, 0.657) -- cycle;
    \fill[cyan, opacity=0.3] (0, 0.657) -- (0, 3.217) -- (3.84, 3.217) -- (3.84, 0.657) -- (0, 0.657) -- (0, 0.657) -- cycle;
    \draw[black, thick, ->] (0, 0.657) -- (3.84, 0.657);
    \draw[black, thick, ->] (1.92, 0.017) -- (1.92, 3.217);
    \draw[black, thick, ->] (3.52, 2.577) .. controls (4.48, 3.217) and (5.44, 3.217) .. (6.08, 2.577);
    \draw[black, thick, ->] (5.76, 0.657) -- (6.4, 1.937) -- (7.36, 1.297) -- (7.68, 0.657) -- (8.96, 1.297) -- (9.6, 1.937) -- (9.92, 1.297);
    \node[anchor=center, font=\normalsize] at (0.439, 2.897) {$\mathbb{R}_{+}^{2}$};
    \node[anchor=center, font=\normalsize] at (9.6, 2.897) {$\Omega$};
    \node[anchor=center, font=\normalsize] at (4.8, 2.577) {$\Phi$};
    \node[anchor=center, font=\normalsize] at (2.56, 2.257) {$u$};
    \node[anchor=center, font=\normalsize] at (8, 2.257) {$v=u\circ\Phi^{-1}$};
    \node[anchor=center, font=\normalsize] at (8.439, 0.657) {$g(\xi)$};
    \node[anchor=center, font=\normalsize] at (2.904, 0.206) {$\mathcal{T}_{N}g(x)$};
\end{tikzpicture}

    \caption{Transference of solutions of the Neumann problem from $\R_{+}^{2}$ to $\Omega$.}
    \label{fig:transferenceSolutions}
\end{figure}
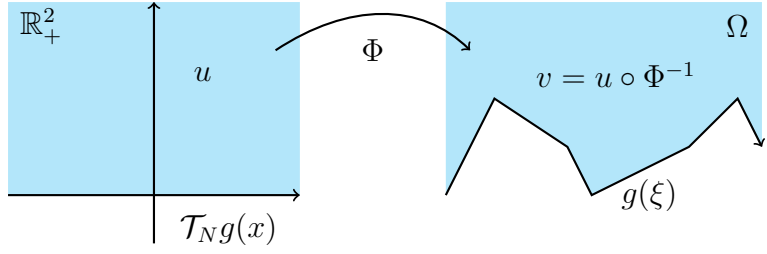

\subsection{Proof of Theorem \ref{thm:Lp_Solvability}}
        The proof follows the same argument as the one for Theorem 1.4 in \cite{CNO}. We just need to replace $|\Phi'|^{1-p}$ by the more general $|\Phi'|^{-p}\Phi(\nu)$, and use Theorem \ref{thm:analogoLema4.1CNO} to deal with the Hardy space norms. We give the details for completeness.

        Let $g\in L^p(\Lambda, \nu)$. Consider the following Neumann problem in $\R_{+}^{2}$:
        \begin{equation}\label{eq:problemaSemiplanoProblemaNeumannPesoNu}
            \begin{cases}
            \Delta u=0 & \text{ in } \R_{+}^{2},\\
            \partial_{\mathbf{n}}u= \mathcal{T}_{N}(g) & \text{ on }\partial \R_{+}^{2},\\ 
            \|\mathcal{M}_{\alpha}(\nabla u)\|_{L^p(\R,|\Phi'|^{-p}\, \Phi(\nu))}\lesssim \|\mathcal{T}_N(g)\|_{L^p(\R,|\Phi'|^{-p}\, \Phi(\nu))}. &
            \end{cases}
        \end{equation}
        By Theorem \ref{thm:thm1.3_CNO}, a solution is $u=u_{\mathcal{T}_N(g)}$ as defined in \eqref{eq:NeumannIntegral}, because $|\Phi'|^{-p}\Phi(\nu)\in A_p(\R)$. Let us see that
        \[
        v_{g}(z_1,z_2)
        \coloneqq 
        u_{\mathcal{T}_{N}(g)}\circ \Phi^{-1}(z_1,z_2),\quad (z_1,z_2)\in \Omega,
        \]
        is a solution to
        \begin{equation}\label{eq:problemaDominioProblemaNeumannPesoNu}
        \begin{cases}
            \Delta v=0 & \text{ in }\Omega, \\
            \partial_{\mathbf{n}}v=g & \text{ on }\partial\Omega=\Lambda,\\
            \|\mathcal{M}_{\beta}(\nabla v)\|_{L^p(\Lambda,\nu)}\lesssim \|g\|_{L^p(\Lambda,\nu)}.
        \end{cases}
        \end{equation}
        By Theorem \ref{thm:page41_CNO}, the only thing that remains to be proved is the non-tangential maximal estimate
        \begin{equation}\label{eq:estimacionPendienteDemNeumannNuLp}
        \|\mathcal{M}_{\beta}(\nabla v)\|_{L^p(\Lambda,\nu)}\lesssim \|g\|_{L^p(\Lambda,\nu)}.
        \end{equation}
 Since
        \[
        \|\mathcal{M}_{\alpha}(\nabla u)\|_{L^p(\R,|\Phi'|^{-p}\Phi(\nu))}
        \lesssim 
        \|\mathcal{T}_{N}(g)\|_{L^p(\R,|\Phi'|^{-p}\Phi(\nu)},
        \]
        to prove \eqref{eq:estimacionPendienteDemNeumannNuLp} it would suffice, by \eqref{chv},  to prove the following inequality:
        \[
        \|\mathcal{M}_{\beta}(\nabla v)\|_{L^p(\Lambda,\nu)}
        \lesssim 
        \|\mathcal{M}_{\alpha}(\nabla u)\|_{L^p(\R,|\Phi'|^{-p}\, \Phi(\nu))}.
        \]
        To use the theory of Hardy Spaces, we need to construct analytic functions.
        Following \cite{CNO}, we define $F$ as the conjugate gradient of $u$, that is,
        \[
        F(y_1,y_2)
        \coloneqq
        \partial_1 u(y_1,y_2)-i\partial_2 u(y_1,y_2),\quad (y_1,y_2)\in\R_{+}^{2},
        \]
        and $G$ as the conjugate gradient of $v$, 
        \[
        G(z_1,z_2)
        \coloneqq
        \partial_1 v(z_1,z_2)-i\partial_2 u(z_1,z_2),\quad (z_1,z_2)\in\Omega.
        \]
        Then 
        \[
        |F|=|\nabla u|\quad \text{ in }\R_{+}^{2},\quad |G|=|\nabla v|\quad \text{ in }\Omega,
        \]
        so
        \[
        \|\mathcal{M}_{\alpha}(\nabla u)\|_{L^p(\R,|\Phi'|^{-p}\, \Phi(\nu))}
        \simeq_{\alpha}
        \|F\|_{H^p(\R_{+}^{2},|\Phi'|^{-p}\, \Phi(\nu))}
        \]
        and
        \[
        \|\mathcal{M}_{\beta}(\nabla v)\|_{L^p(\Lambda,\nu)}
        \simeq_{\beta}
        \|G\|_{H^p(\Omega,\nu)}.
        \]
        So we have to prove that
        \begin{equation}\label{eq:goalProofLpSolvability}
        \|G\|_{H^p(\Omega,\nu)}
        \lesssim 
        \|F\|_{H^p(\R_{+}^{2},|\Phi'|^{-p}\Phi(\nu))}.
        \end{equation}
        It can be checked that
        \[
        G\circ\Phi=F\cdot \frac{1}{\Phi'}
        \quad \text{ on }\R_{+}^{2}.
        \]
        By Theorem \ref{thm:analogoLema4.1CNO}, which we can apply because $|\Phi'|^{-p}\Phi(\nu)\in A_p(\R)$, we have
        \[
        \|F\|_{H^p(\R_{+}^{2},|\Phi'|^{-p}\Phi(\nu))}
        \simeq 
        \left\|F\cdot \frac{1}{\Phi'}\right\|_{H^p(\R_{+}^{2},\Phi(\nu))}
        =
        \|G\circ \Phi\|_{H^p(\R_{+}^{2},\Phi(\nu))}.
        \]
        To finish we just apply Theorem \ref{thm2.8_Kenig} to obtain 
        \[
        \|G\circ \Phi\|_{H^p(\R_{+}^{2},\Phi(\nu))}
        \simeq 
        \|G\|_{H^p(\Omega,\nu)}.
        \]
        To sum up, $\|G\|_{H^p(\Omega, \nu)}\simeq \|F\|_{H^p(\R_{+}^{2},|\Phi'|^{-p}\Phi(\nu))}$, which in particular implies \eqref{eq:goalProofLpSolvability}.

        To prove uniqueness up to constants, let $g\in L^p(\Lambda,\nu)$ and assume $\Tilde{v}$ is a solution to \eqref{eq:problemaDominioProblemaNeumannPesoNu}. Then, by reasonings analogous to the ones we have just made, $\Tilde{u}\coloneqq \Tilde{v}\circ\Phi$ is a solution to \eqref{eq:problemaSemiplanoProblemaNeumannPesoNu}. Since $\mathcal{M}_{\alpha}(\nabla u)\in L^p(\R,|\Phi'|^{-p}\Phi(\nu))$ with $|\Phi'|^{-p}\Phi(\nu)\in A_p$, then we can apply Theorem 2.5 in \cite{transmissionCarro} and obtain that $\Tilde{u}=u_{\mathcal{T}_{N}(g),N}+C$ for some $C\in\R$. Therefore, composing with $\Phi^{-1}$, we have $\Tilde{v}=u_{\mathcal{T}_{N}(g),N}\circ\Phi^{-1}+C$. So the solution to \eqref{eq:problemaDominioProblemaNeumannPesoNu} is $u_{\mathcal{T}_{N}(g),N}\circ\Phi^{-1}$, up to additive constants.  

\subsection{The range of solvability}

By Theorem \ref{thm:Lp_Solvability}, we can solve the Neumann problem in $L^p(\Lambda,\nu)$ when $|\Phi'|^{-p}\Phi(\nu)\in A_p(\R)$, so the set of exponents $p$ for which the problem is solvable is given by
\[
R(\nu)
\coloneqq 
\{
q\in (1,\infty):|\Phi'|^{-q}\Phi(\nu)\in A_q(\R)
\}.
\]
Unlike the case $\nu=1$ explained in the introduction, it is not even clear that this set is non-empty. The following results aim to understand better its structure, which can be much more flexible than in the case of arc-length measure, where it is an interval of the form $(1,2+\varepsilon)$. Indeed,
\begin{itemize}
    \item There are examples of $\nu\in A_{\infty}(\Lambda)$ such that $R(\nu)=\varnothing$.
    \item When $R(\nu)\neq\varnothing$,  it is an interval that can have any size and any location. In particular, it can be at a positive distance from $p=1$, unlike the arc-length measure case $\nu\equiv 1$. 
\end{itemize}

We leave the examples for Section \ref{section:examplesDirichlet} and prove now the second item.

\begin{lemma}\label{lemma:lemma1RangeSolvability}
    Let $\nu\in A_{\infty}(\Lambda)$. If $R(\nu)\neq\varnothing$, then there are at least three points in $R(\nu)$. More specifically, given $p\in R(\nu)$, there exist $q_{-},q_{+}\in R(\nu)$ with $q_{-}<p<q_{+}$. 
\end{lemma}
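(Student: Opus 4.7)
The plan is to prove the stronger statement that $R(\nu)$ is open in $(1,\infty)$; the claim then follows immediately. Fix $p\in R(\nu)$ and set
\[
w_q
\coloneqq
|\Phi'|^{-q}\Phi(\nu)
=
|\Phi'|^{1-q}(\nu\circ\Phi),
\]
so $w_p\in A_p(\R)$ by hypothesis. The goal is to bound the $A_q$-constant of $w_q$ uniformly for $q$ in a small neighborhood of $p$, starting from the algebraic identities
\[
w_q
=
w_p\cdot|\Phi'|^{p-q},
\qquad
w_q^{-1/(q-1)}
=
w_p^{-1/(q-1)}\cdot|\Phi'|^{(q-p)/(q-1)},
\]
which split the two averages appearing in the $A_q$-condition into factors involving $w_p$ alone and factors involving $|\Phi'|$ alone.

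I would then apply H\"older's inequality to each of $\frac{1}{|Q|}\int_Q w_q$ and $\frac{1}{|Q|}\int_Q w_q^{-1/(q-1)}$, choosing the exponents so that the $|\Phi'|$-factors collapse to powers of $\frac{1}{|Q|}\int_Q|\Phi'|^{\pm 1}$. Three tools will control the residual $w_p$-factors: (1) the reverse H\"older inequality for $w_p\in A_p$, yielding some $r>1$ with $(\frac{1}{|Q|}\int_Q w_p^{r})^{1/r}\lesssim\frac{1}{|Q|}\int_Q w_p$; (2) the reverse H\"older inequality for $w_p^{-1/(p-1)}\in A_{p'}$, yielding some $s>1$ with $(\frac{1}{|Q|}\int_Q w_p^{-s/(p-1)})^{1/s}\lesssim\frac{1}{|Q|}\int_Q w_p^{-1/(p-1)}$; and (3) the fact that $|\Phi'|\in A_2(\R)$, which gives $(\frac{1}{|Q|}\int_Q|\Phi'|)(\frac{1}{|Q|}\int_Q|\Phi'|^{-1})$ uniformly bounded in $Q$.

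For $q>p$ close to $p$, the H\"older exponents applied to $\frac{1}{|Q|}\int_Q w_q^{-1/(q-1)}$ come out naturally to $\bigl((q-1)/(p-1),\,(q-1)/(q-p)\bigr)$, both strictly greater than $1$, and the weight $w_p^{-1/(p-1)}$ appears with the harmless exponent $(p-1)/(q-1)<1$, so tool (2) is not needed in this step. For $q<p$ the same exponent becomes $(p-1)/(q-1)>1$, obstructing the direct H\"older split, and one must first use (2) to lift $w_p^{-1/(p-1)}$ to a slightly larger power before H\"older applies. In either case, combining everything yields a uniform bound on the $A_q$-constant of $w_q$ in terms of the $A_p$-constant of $w_p$ and the $A_2$-constant of $|\Phi'|$ raised to the power $|q-p|$, proving $q\in R(\nu)$ for all $q$ with $|q-p|$ sufficiently small.

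The main obstacle will be the bookkeeping of the H\"older and reverse H\"older exponents: they must simultaneously exceed $1$ (forcing the asymmetric treatment of $q>p$ and $q<p$) and fit within the reverse H\"older windows of $w_p$ and $w_p^{-1/(p-1)}$, which is what ultimately constrains $|q-p|$ to be small but imposes no qualitative restriction.
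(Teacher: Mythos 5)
Your proposal is correct, and the exponent analysis you sketch does close: for $q>p$ the split of $\frac{1}{|Q|}\int_Q w_p|\Phi'|^{-(q-p)}$ with H\"older exponents $\bigl(\tfrac{1}{1-(q-p)},\tfrac{1}{q-p}\bigr)$ lands inside the reverse H\"older window of $w_p$ once $q-p$ is small, the second average splits with exponents $\bigl(\tfrac{q-1}{p-1},\tfrac{q-1}{q-p}\bigr)$ exactly as you say, and the two stray $|\Phi'|$-averages pair up into $\bigl(\frac{1}{|Q|}\int_Q|\Phi'|\bigr)\bigl(\frac{1}{|Q|}\int_Q|\Phi'|^{-1}\bigr)\le [|\Phi'|]_{A_2}$; the $q<p$ case works symmetrically once you invoke reverse H\"older for the dual weight $w_p^{-1/(p-1)}\in A_{p'}$. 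This is, however, a genuinely different route from the paper's. The paper argues through the Jones factorization theorem: it writes $\nu(\Phi)|\Phi'|^{1-p}=u_0u_1^{1-p}$ with $u_0,u_1\in A_1$ and $|\Phi'|=\widehat{u}_0\widehat{u}_1^{-1}$ with $\widehat{u}_0,\widehat{u}_1\in A_1$, then regroups the four factors into a factorization $v_0v_1^{1-q}$ of $\nu(\Phi)|\Phi'|^{1-q}$, checking that $v_0=u_0\widehat{u}_0^{p-q}$ and $v_1=u_1^{(1-p)/(1-q)}\widehat{u}_1^{(q-p)/(1-q)}$ are in $A_1$ for $|q-p|$ small via the self-improvement $u\in A_1\Rightarrow u^r\in A_1$ and the stability of $A_1$ under products $w_0^{\alpha_0}w_1^{\alpha_1}$ with $\alpha_0+\alpha_1\le 1$. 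The trade-off: the paper's argument outsources all exponent bookkeeping to the (nontrivial) factorization theorem and the algebra of $A_1$ weights, while yours is more elementary and more quantitative, producing an explicit bound $[w_q]_{A_q}\lesssim [w_p]_{A_p}[|\Phi'|]_{A_2}^{|q-p|}$ directly from the definition; it also proves openness of $R(\nu)$ outright, which the paper only assembles afterwards (Theorem 5.4) by combining this lemma with a separate convexity argument.
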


\begin{proof}
    Let $p\in R(\nu)$. This means that $|\Phi'|^{-p}\Phi(\nu)\in A_p$, which can be rewritten as
    \[
    \nu(\Phi(x))|\Phi'(x)|^{1-p}\in A_p(\R).
    \]
    Therefore, by the Factorization Theorem,  
    \begin{equation}\label{eq:fact1_rangeSolvability}
    \exists u_0,u_1\in A_1(\R): \nu(\Phi(x))|\Phi'(x)|^{1-p}=u_0 u_1^{1-p}.    
    \end{equation}
    We want to check that there exists $q\neq p$ such that $q\in R(\nu)$, i.e., such that $\nu(\Phi(x))|\Phi'(x)|^{1-q}\in A_q$. Again, by the Factorization Theorem, this is equivalent to the existence of $v_0,v_1\in A_1(\R)$ such that
    \begin{equation}\label{eq:fact2_rangeSolvability}
        \nu(\Phi(x))|\Phi'(x)|^{1-p}=v_0 v_1^{1-q}.
    \end{equation}

    We try to use \eqref{eq:fact1_rangeSolvability} to obtain \eqref{eq:fact2_rangeSolvability}. We can write
    \begin{equation}\label{eq:ident1_rangeSolvability}
    \nu(\Phi(x))|\Phi'(x)|^{1-q}
    =
    \nu(\Phi(x))|\Phi'(x)|^{1-p}|\Phi'(x)|^{p-q}
    =
    u_0 u_1^{1-p}|\Phi'(x)|^{p-q}.    
    \end{equation}
    Since $|\Phi'(x)|\in A_2(\R)$, we can use once more the Factorization Theorem to obtain that there exists $\widehat{u}_0,\widehat{u}_1\in A_1(\R)$ such that
    \[
    |\Phi'(x)|
    =
    \widehat{u}_0\widehat{u}_1^{-1}.
    \]
    Substituting this into \eqref{eq:ident1_rangeSolvability}, we have
    \begin{equation}\label{eq:ident2_rangeSolvability}
    \nu(\Phi(x))|\Phi'(x)|^{1-q}
    =
    u_0 u_1^{1-p}(\widehat{u}_0\widehat{u}_1^{-1})^{p-q}
    =
    u_0 u_{1}^{1-p} \widehat{u}_{0}^{p-q} \widehat{u}_{1}^{q-p}.    
    \end{equation}

We have to distinguish the two cases $p-q>0$ and $p-q<0$. We just give the details of the case $p>q$, because the other one can be settled in the same way. So assume $p>q$. Then
    \begin{equation}\label{eq:case1_rangeSolvability}
        u_0 u_1^{1-p} \widehat{u}_{0}^{p-q}\widehat{u}_{1}^{q-p}
        =
        (u_0\widehat{u}_0^{p-q})(u_1^{1-p}\widehat{u}_1^{q-p})
        =
        (u_0\widehat{u}_0^{p-q})(u_1^{\frac{1-p}{1-q}}\widehat{u}_{1}^{\frac{q-p}{1-q}})^{1-q},
    \end{equation}
    so it is enough to prove that $u_0\widehat{u}_0^{p-q}\in A_1(\R)$ and $u_1^{\frac{1-p}{1-q}}\widehat{u}_{1}^{\frac{q-p}{1-q}}\in A_1(\R)$ for some value of $q<p$.

    We have that $u_0\widehat{u}_0^{p-q}\in A_1(\R)$ if  $p-q$ is small enough. Indeed, by Lemma \ref{lemma:improvingA1}, $u_0^{r}\in A_1(\R)$ for some $r>1$ small enough. So we can write
    \[
    u_0\widehat{u}_0^{p-q}
    =
    (u_0^{r})^{\frac{1}{r}}\widehat{u}_{0}^{p-q},
    \]
    which is in $A_1(\R)$ if $\frac{1}{r}+p-q\leq 1$ by Lemma \ref{lemma:improvingA1}.

    We also have that $u_1^{\frac{1-p}{1-q}}\widehat{u}_{1}^{\frac{q-p}{1-q}}\in A_1(\R)$ if $p-q$ is small enough. The reasoning is analogous using that  there exists $r>1$ sufficiently small such that  $u_1^r\in A_1(\R)$. 
\end{proof}

\begin{theorem}\label{lemma:lemma2RangeSolvability}
    Let $\nu\in A_{\infty}(\Lambda)$. If $R(\nu)\neq\varnothing$, then $R(\nu)$ is an open interval. \end{theorem}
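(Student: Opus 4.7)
The plan is to split the statement into two ingredients: an interval (convexity) property for $R(\nu)$, and openness, which will follow from Lemma \ref{lemma:lemma1RangeSolvability}.

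For the interval property, I would start with $p_1<p_2$ in $R(\nu)$ and any $q=\alpha p_1+(1-\alpha)p_2\in(p_1,p_2)$, so $\alpha=(p_2-q)/(p_2-p_1)\in(0,1)$. Setting
\[
w_i \coloneqq |\Phi'|^{-p_i}\Phi(\nu) = (\nu\circ\Phi)|\Phi'|^{1-p_i}\in A_{p_i}, \qquad i=1,2,
\]
the exponent identity $\alpha(1-p_1)+(1-\alpha)(1-p_2)=1-q$ gives the clean decomposition
\[
|\Phi'|^{-q}\Phi(\nu) \;=\; w_1^{\alpha}\, w_2^{1-\alpha}.
\]
Hence the goal reduces to the convexity fact that, for $w_i\in A_{p_i}$, the geometric mean $w_1^{\alpha}w_2^{1-\alpha}$ belongs to $A_q$ with $q=\alpha p_1+(1-\alpha)p_2$. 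I would verify this in place by applying Hölder's inequality twice on an arbitrary cube $Q\subset\R$: first with exponents $1/\alpha,\,1/(1-\alpha)$ to bound $\frac{1}{|Q|}\int_Q w_1^{\alpha}w_2^{1-\alpha}$, and then with exponents $1/s_1,\,1/s_2$ to bound $\frac{1}{|Q|}\int_Q w^{-1/(q-1)}$, where $s_1=\alpha(p_1-1)/(q-1)$ and $s_2=(1-\alpha)(p_2-1)/(q-1)$. The crucial algebraic identity $s_1+s_2=1$ is just a reformulation of $q=\alpha p_1+(1-\alpha)p_2$, and the two bounds multiply to yield $[w_1^{\alpha}w_2^{1-\alpha}]_{A_q}\leq [w_1]_{A_{p_1}}^{\alpha}[w_2]_{A_{p_2}}^{1-\alpha}<\infty$, so $q\in R(\nu)$.

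Once the interval property is in hand, openness is immediate from Lemma \ref{lemma:lemma1RangeSolvability}: every $p\in R(\nu)$ admits neighbors $q_{-}<p<q_{+}$ inside $R(\nu)$, and the convexity just proved forces $(q_{-},q_{+})\subseteq R(\nu)$, which is an open neighborhood of $p$. Since $p$ is arbitrary, $R(\nu)$ is open, and being convex it is an open interval.

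The main technical point is correctly setting up the Hölder exponents for the dual factor; the identity $s_1+s_2=1$ is what makes the scheme close, and it is automatic from $q=\alpha p_1+(1-\alpha)p_2$. An alternative, more indirect route would be to use the Jones factorization $w_i=u_{0,i}u_{1,i}^{1-p_i}$ with $u_{0,i},u_{1,i}\in A_1$ and then recombine the factors (in the spirit of Lemma \ref{lemma:lemma1RangeSolvability}), but the Hölder argument is cleaner and gives a sharp quantitative bound on $[w_1^\alpha w_2^{1-\alpha}]_{A_q}$ as a bonus.
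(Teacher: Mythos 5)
Your argument is correct, and its overall architecture matches the paper's: establish that $R(\nu)$ is convex, then combine with Lemma \ref{lemma:lemma1RangeSolvability} to get openness (the paper's printed proof of the theorem records only the convexity step and handles openness in the surrounding discussion, exactly as you do). The difference is in how convexity is implemented. The paper parametrizes the intermediate exponent by the \emph{harmonic} mean $\frac1p=\frac{1-\theta}{p_0}+\frac{\theta}{p_1}$ and appeals to interpolation with change of measure to conclude $w=(w_0^{(1-\theta)/p_0}w_1^{\theta/p_1})^p\in A_p$, checking that this $w$ equals $\nu(\Phi)|\Phi'|^{1-p}$. You instead use the \emph{arithmetic} mean $q=\alpha p_1+(1-\alpha)p_2$, note the identity $|\Phi'|^{-q}\Phi(\nu)=w_1^{\alpha}w_2^{1-\alpha}$, and prove directly, by two applications of H\"older, that a geometric mean of $A_{p_1}$ and $A_{p_2}$ weights lies in $A_q$; your exponents do close up, since $s_1+s_2=\frac{\alpha(p_1-1)+(1-\alpha)(p_2-1)}{q-1}=1$, and $s_1,s_2>0$ because $p_1,p_2>1$ and $\alpha\in(0,1)$. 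Both parametrizations sweep out all of $(p_1,p_2)$, so both yield convexity; yours is more self-contained and comes with the quantitative log-convexity bound $[w_1^{\alpha}w_2^{1-\alpha}]_{A_q}\le[w_1]_{A_{p_1}}^{\alpha}[w_2]_{A_{p_2}}^{1-\alpha}$, while the paper's appeal to interpolation is shorter but leans on an external result (and on the same factorization-free algebra of exponents). No gaps.
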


\begin{proof}
    Let us see that it is a convex set. Indeed, assume $p_0,p_1\in R(\nu)$. That is,
    \[
    w_i\coloneqq \nu(\Phi(x))|\Phi'(x)|^{1-p_i}\in A_{p_i},\quad i=1,2. 
    \]
    By interpolation, if we fix $\theta\in [0,1]$ and define $p$ via the relation
    \begin{equation}\label{eq:convexLinearCombinationLemma}
    \frac{1}{p}=\frac{1-\theta}{p_0}+\frac{\theta}{p_1},    
    \end{equation}
    then $w\in A_p$, with
    \[
    w
    =
    (w_0^{\frac{1-\theta}{p_0}}w_{1}^{\frac{\theta}{p_1}})^p
    =
    \nu(\Phi)|\Phi'|^{1-p},
    \]
    and the result follows.
   \end{proof}

Therefore, if $R(\nu)\neq \varnothing$,  we can define
\begin{equation}\label{eq:p-}
p_{-}\coloneqq \inf R(\nu), \quad\mbox{and}\quad  p_{+}
    \coloneqq    \sup{R(\nu)}
\end{equation}

By the previous results, we know that $p_{-}\neq p_{+}$. Besides, $p_{-},p_{+}\not\in R(\nu)$. Indeed, if $p_{-}\in R(\nu)$, by Lemma \ref{lemma:lemma1RangeSolvability} there would exist some $q<p_{-}$ such that $q\in R(\nu)$, contradicting that $p_{-}$ is the infimum. The same works for $p_{+}$.

\subsection{A remark on duality}\label{sec:duality}

When $\nu\equiv 1$, the Dirichlet problem is solvable in $L^p(\Lambda,ds)$ if and only if $|\Phi'|\in A_p$. On the other hand, the Neumann problem is solvable in $L^{p'}(\Lambda,ds)$ when $|\Phi'|^{1-p'}\in A_{p'}$. Since $|\Phi'|\in A_p$ if and only if $|\Phi'|^{1-p'}\in A_{p'}$, if we knew optimality of these conditions, then we would have that the Dirichlet problem is solvable in $L^p$ if and only if the Neumann problem is solvable in $L^{p'}$. This is why the ranges of solvability are dual to each other: if $(p_{\Phi},\infty)$ is the Dirichlet range, the Neumann range is $(1,p_{\Phi}')$.

We can reason in an analogous way for a general measure $\nu$. Indeed, by Theorem \ref{thm:thm4.4_Kenig}, the Dirichlet problem in $L^{p}(\Lambda,\nu)$ is solvable if and only if $\Phi(\nu)\in A_p$. And $\Phi(\nu)\in A_p$ if and only if $\Phi(\nu)^{1-p'}\in A_{p'}$. It turns out that
\[
\Phi(\nu)^{1-p'}
=
|\Phi'|^{-p'}\Phi(\nu^{1-p'}),
\]
so if the Dirichlet problem is solvable in $L^p(\Lambda,\nu)$, then $|\Phi'|^{-p'}\Phi(\nu^{1-p'})\in A_{p'}$, and hence the Neumann problem is solvable in $L^{p'}(\Lambda,\nu^{1-p'})$.   For the reverse implication, we would need to know that if the Neumann problem is solvable in $L^p(\Lambda,\nu)$, then $|\Phi'|^{-p}\Phi(\nu)\in A_p$. {This is an open problem.}

\section{\texorpdfstring{The Neumann Problem in $H_{at}^{1}(\Lambda, \nu)$}{}}\label{sec:H1solvability}

We know from the $L^p$-case that if $v=u_{\mathcal{T}_{N}(g),N}\circ\Phi^{-1}$, then $v$ is a solution to the Neumann problem in $\Omega$ with boundary datum $g$. Therefore, it is the natural candidate also for the $H_{at}^{1}$-case as well.

\subsection{\texorpdfstring{$H^1(\R,w)$}{}-solvability in \texorpdfstring{$\R_{+}^{2}$}{}}

\begin{theorem}\label{thm:H1SolvabilityHalfPlane}
    Let $w\in A_{1}$. Then the Neumann problem is solvable in $X=H_{at}^{1}(\R,w)$ with $Y=L^1(\R,w)$. More explicitly, given $f\in H_{at}^1(\R,w)$, $u_{f,N}$ is a well-defined solution to the Neumann problem with datum $f$ that satisfies the non-tangential maximal estimate
    \[
    \|\mathcal{M}_{\alpha}(\nabla u_{f,N})\|_{L^1(\R,w)}
    \lesssim 
    \|f\|_{H_{at}^1(\R,w)}.
    \]
\end{theorem}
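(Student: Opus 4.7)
I would verify that $u_{f,N}$ is the sought solution and then reduce the non-tangential maximal estimate to a uniform bound on individual atoms. Since $H_{at}^{1}(\mathbb{R},w) \hookrightarrow L^{1}(\mathbb{R},w)$ and $w\in A_1$, Lemma \ref{lemma:estimateNeumannIntegral} (with $p=1$) yields $\int_{\mathbb{R}} |f(x)|/(1+|x|)\,dx<\infty$, so $u_{f,N}$ is well-defined and harmonic in $\mathbb{R}_{+}^{2}$, with gradient given by \eqref{eq:gradientNeumannIntegral}. The Neumann trace $\partial_{\mathbf n} u_{f,N}(y_1,y_2)=-\partial_2 u_{f,N}(y_1,y_2)=P_{y_2}\ast f(y_1)$ converges non-tangentially to $f$ a.e.\ by Theorem \ref{thm:Stein_thm1Page197}, so $u_{f,N}$ solves the Neumann problem with datum $f$.

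For the maximal estimate, write $f=\sum_j \lambda_j a_j$ with $\sum_j |\lambda_j|\leq 2\|f\|_{H_{at}^{1}(\mathbb{R},w)}$, where each $a_j$ is an $(\mathbb{R},w)$-atom. By linearity of \eqref{eq:NeumannIntegral} and subadditivity of $\mathcal{M}_{\alpha}$,
\[
\|\mathcal{M}_{\alpha}(\nabla u_{f,N})\|_{L^{1}(\mathbb{R},w)} \leq \sum_j |\lambda_j|\, \|\mathcal{M}_{\alpha}(\nabla u_{a_j,N})\|_{L^{1}(\mathbb{R},w)},
\]
so it suffices to show $\|\mathcal{M}_{\alpha}(\nabla u_{a,N})\|_{L^{1}(\mathbb{R},w)}\lesssim 1$ uniformly over atoms $a$ supported in some interval $I=B(x_0,r)$ with $\|a\|_{\infty}\leq 1/w(I)$ and $\int a\,w\,dx=0$. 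I would split this as $\int_{2I}+\int_{(2I)^{c}}$.

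For the local part, Lemma \ref{thm:sparseDominationNonTangential} gives $L^{q}(\mathbb{R},w)$-boundedness of the operator for every $q>1$ (since $w\in A_1\subset A_q$). Combining H\"older's inequality with the atomic size bound and the $A_1$-doubling $w(2I)\lesssim w(I)$,
\[
\int_{2I} \mathcal{M}_{\alpha}(\nabla u_{a,N})\, w\, dx \leq w(2I)^{1/q'}\,\|\mathcal{M}_{\alpha}(\nabla u_{a,N})\|_{L^{q}(w)} \lesssim w(I)^{1/q'}\|a\|_{\infty} w(I)^{1/q}\lesssim 1.
\]
For the nonlocal part, I would use the explicit representation $\nabla u_{a,N}=(-Q_{y_2}\ast a,-P_{y_2}\ast a)$ and the cancellation of $a$. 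For $\xi\in(2I)^{c}$ and $(y_1,y_2)\in\Gamma_{\alpha}(\xi)$, both kernels $\phi\in\{P_{y_2},Q_{y_2}\}$ are smooth on $I$ with $|\phi(y_1-t)-\phi(y_1-x_0)|\lesssim r/(|\xi-x_0|^{2}+y_2^{2})$. Writing $\phi\ast a(y_1)=\int_{I}[\phi(y_1-t)/w(t)-C]\,a(t)w(t)\,dt$ for a suitably chosen constant $C$, using the kernel regularity together with the pointwise lower bound $w(t)\geq c\,w_I$ on $I$ a.e.\ provided by $A_1$, and summing over dyadic shells $(2^{k+1}I)\setminus(2^{k}I)$ with the $A_1$ estimate $w(2^{k}I)\lesssim 2^{k}w(I)$, one obtains the convergent geometric series $\sum_k 2^{-k}\lesssim 1$.

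The principal obstacle is the nonlocal estimate, specifically reconciling the Coifman--Weiss cancellation $\int a\,w\,dx=0$ (which is adapted to the measure $w\,dx$) with the regularity of the Poisson and conjugate Poisson kernels (naturally expressed against Lebesgue measure). The $A_1$ hypothesis on $w$ is precisely what enables this reconciliation via the pointwise comparison $w(t)\simeq w_I$ on $I$, which allows one to trade cancellation against $w\,dx$ for cancellation against $dx$ at the cost of a controlled constant. Uniqueness, if desired, can be addressed via an analogue of the argument used at the end of the proof of Theorem \ref{thm:Lp_Solvability}.
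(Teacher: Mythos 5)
Your setup (well-definedness via Lemma \ref{lemma:estimateNeumannIntegral}, non-tangential convergence via \eqref{eq:gradientNeumannIntegral} and Theorem \ref{thm:Stein_thm1Page197}, reduction to a uniform bound on atoms, and the local estimate on $2I$ by H\"older and the $L^q(w)$ bound) matches the paper's proof. The gap is in the nonlocal estimate, and it is not a technicality. Your key claim is that $A_1$ provides a \emph{two-sided pointwise} comparison $w(t)\simeq w_I$ on $I$, which would let you trade the cancellation $\int_I a\,w\,dx=0$ for cancellation against $dx$. But $A_1$ only gives the one-sided bound $w(t)\gtrsim w_I$ a.e.\ on $I$; an $A_1$ weight such as $|x|^{-1/2}$ is unbounded on intervals containing the origin, so $1/w$ can oscillate on $I$ by its full size $\simeq |I|/w(I)$. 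Concretely, in your decomposition $\phi\ast a(y_1)=\int_I\bigl[\phi(y_1-t)/w(t)-C\bigr]a(t)w(t)\,dt$, the oscillation of $\phi(y_1-\cdot)/w$ on $I$ splits into a good term $\lesssim \frac{r}{|\xi-x_0|^2}\cdot\frac{|I|}{w(I)}$ (which does produce $\sum_k 2^{-k}$) and the term $\frac{1}{|\xi-x_0|}\cdot\operatorname{osc}_I(1/w)\simeq\frac{1}{|\xi-x_0|}\cdot\frac{|I|}{w(I)}$, which carries no extra factor of $r/|\xi-x_0|$. Its contribution to $\int_{(2I)^c}\cdot\,w\,d\xi$ over the shell $2^{k+1}I\setminus 2^kI$ is $\simeq \frac{|I|}{w(I)}\cdot\frac{w(2^kI)}{2^k r}\simeq 1$ by the $A_1$ growth $w(2^kI)\lesssim 2^k w(I)$, so the series diverges. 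Equivalently: the weighted cancellation only yields $|\int_I a\,dt|\lesssim|I|/w(I)$, which is the trivial size bound, and the resulting far-field term $|\phi(y_1-x_0)|\,|\int_I a\,dt|$ is not integrable against $w$ on $(2I)^c$ whenever $\int_{\R} w(x)(1+|x|)^{-1}dx=\infty$.

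The paper does not attempt this computation: after reducing to atoms (justified there through the $L^1\to L^{1,\infty}$ bound of Lemma \ref{thm:sparseDominationNonTangential} together with the uniform atom estimate), it rewrites $|\nabla u_{a,N}|=|P_y\ast(a+i\mathcal{H}a)|$ and quotes Theorem II.3.4 of \cite{GarciaCuerva}; that is, the uniform atom estimate is imported from the weighted Hardy space theory (cf.\ also Theorem \ref{thm:equivalenceWeightedH1}) rather than proved by a direct kernel/cancellation argument. If you want a self-contained proof you must supply the input that replaces that citation; the reconciliation of the two cancellations is exactly where the cited theory does real work, and the direct argument as you have written it does not close.
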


\begin{proof} We have to check three conditions.

\noindent
    \textbf{Harmonicity.} 
    As observed in page 29 of \cite{CNO}, $u_{f,N}$ is a well-defined harmonic function if $\int_{\R}\frac{|f(x)|}{1+|x|}\, dx<\infty$, and this is the case because functions in $f\in H_{at}^{1}(\R,w(x)dx)\subset L^1(\R,w(x)dx)$ with $w\in A_{1}$ satisfy that estimate by Lemma \ref{lemma:estimateNeumannIntegral}.

    \vspace{0.3cm}\noindent
    \textbf{Non-tangential convergence}. To justify the non-tangential convergence, notice that
    \[
    \nabla u_{f,N}(y_1,y_2)\cdot \mathbf{n}(x,0)
    =
    -\frac{\partial u_{f,N}}{\partial y_2}(y_1,y_2)
    =
    (P_{y_2}\ast f)(y_1), \quad (y_1,y_2)\in\R_{+}^{2},
    \]
    where the last equality comes from \eqref{eq:gradientNeumannIntegral}. Therefore,   
    \[
    \partial_{\mathbf{n}}u_{f,N}(x,0)
    =
    \lim_{(y_1,y_2)\triangleright (x,0)}
    (P_{y_2}\ast f)(y_1)
    =
    f(x),\quad  dx\text{-a.e.}
    \]
    by Theorem \ref{thm:Stein_thm1Page197}, which we can apply by Lemma \ref{lemma:estimateNeumannIntegral} because $f\in L^1(\R,w)$ with $w\in A_{1}$.

    \vspace{0.3cm}\noindent
    \textbf{Boundedness of non-tangential maximal operators.} Finally, given $f\in H_{at}^{1}(\R,w)$, we want to see that
    \[
    \|\mathcal{M}_{\alpha}(\nabla u_{f,N})\|_{L^1(\R,w)}
    \lesssim 
    \|f\|_{H_{at}^1(\R,w)},\quad f\in H_{at}^{1}(\R,w).
    \]
    To prove that an operator $T$ is bounded from $H_{at}^{1}\to L^1$ it is enough 
     to check that for any atom $a$, $\|Ta\|_{L^1}\lesssim 1$, and that $T$ is bounded from $L^1\to L^{1,\infty}$.
    
    Our operator is $f\mapsto \mathcal{M}_{\alpha}(\nabla u_{f,N})$. The $L^1(\R,w)\to L^{1,\infty}(\R,w)$ boundedness is true by Theorem \ref{thm:sparseDominationNonTangential}, so we just need to justify that, for any $(\R,w)$-atom $a$,
    \begin{equation}\label{eq:estimateAtoms}
        \|\mathcal{M}_{\alpha}(\nabla u_{a,N})\|_{L^1(\R,w)}\lesssim_{\alpha} 1.
    \end{equation}
    By \eqref{eq:gradientNeumannIntegral},
    \[
    \nabla u_{a,N}(y_1,y_2)
    =
    (-Q_{y_2}\ast a(y_1),-P_{y_2}\ast a(y_1)),
    \quad (y_1,y_2)\in \R_{+}^{2}.
    \]
    Since $a\in L_{c}^{\infty}$, it is known that $Q_{y}\ast a(x)=P_{y}\ast \mathcal{H}a(x)$, so we have
    \[
    |\nabla u_{a,N}(y_1,y_2)|
    = 
    |P_y\ast (a+i\mathcal{H}a)(x)|.
    \]
    Therefore, \eqref{eq:estimateAtoms} is justified by Theorem II.3.4 in
    \cite{GarciaCuerva}.  \qedhere
\end{proof}

\subsection{Norm of the boundary data}\label{section:H1normNeumannDatum}

In this subsection we prove the equivalence
\begin{equation}\label{eq:equivalenciaDatoH1}
    \|f\|_{H_{at}^{1}(\Lambda,\nu)}
    \simeq
    \|\mathcal{T}_{N}(f)\|_{H_{at}^{1}(\R,(\nu\circ\Phi))}
\end{equation}
between the boundary data of $\Omega$ and $\R_{+}^{2}$. 

\begin{lemma}\label{lemma:atomEquivalence1}
    A function $a\colon\Lambda\to\R$ is a $(\Lambda, \nu)$-atom if and only if $(a\circ\Phi)$ is a $(\R,\Phi(\nu))$-atom.
\end{lemma}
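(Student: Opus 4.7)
The plan is to verify the three defining conditions of an atom (support on a ball, $L^{\infty}$-bound, cancellation) on each side of the equivalence, using the change of variables induced by $\Phi$.

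The key geometric observation will be that both $\Phi\colon\R\to\Lambda$ (by Theorem \ref{thm:thm1.1_Kenig}(1)) and $\eta\colon\R\to\Lambda$ (by its explicit form $x\mapsto x+i\gamma(x)$) are homeomorphisms, so $\eta^{-1}\circ\Phi\colon\R\to\R$ is a homeomorphism of $\R$. Being a homeomorphism of $\R$, it is strictly monotone, hence sends intervals to intervals. I will use this to identify preimages under $\Phi$ of arcs $B_{\Lambda}(\xi_0,r)=\eta(B(\eta^{-1}(\xi_0),r))$ with genuine intervals in $\R$, and vice versa.

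Given this, suppose first that $a$ is a $(\Lambda,\nu)$-atom supported on $B_{\Lambda}(\xi_0,r)$. I will take $I\coloneqq \Phi^{-1}(B_{\Lambda}(\xi_0,r))$, which is an interval by the observation above. Then $\supp(a\circ\Phi)\subset I$. The sup-norm bound will follow from
\[
\|a\circ\Phi\|_{\infty}=\|a\|_{\infty}\leq \frac{1}{\nu(B_{\Lambda}(\xi_0,r))}=\frac{1}{\Phi(\nu)(I)},
\]
using Definition \ref{prop:funcionDensidadPhiNu}. For the cancellation, the change of variables $\xi=\Phi(x)$ (equivalently, the fact that $\Phi(\nu)$ is the pushforward of $\nu$ by $\Phi^{-1}$) will give
\[
\int_{\R}(a\circ\Phi)(x)\,\Phi(\nu)(x)\,dx=\int_{\Lambda}a(\xi)\,\nu(\xi)\,ds(\xi)=0.
\]

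The converse direction is handled symmetrically: if $a\circ\Phi$ is a $(\R,\Phi(\nu))$-atom supported on an interval $I\subset \R$, then $\Phi(I)$ is of the form $B_{\Lambda}(\xi_0,r)$ for appropriate $\xi_0\in\Lambda$ and $r>0$ (again by monotonicity of $\eta^{-1}\circ\Phi$), and the $L^{\infty}$-bound and cancellation transfer back via the same identity $\Phi(\nu)(I)=\nu(\Phi(I))$.

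The main (and only) delicate point is the geometric observation that $\Phi$ preserves the class of intervals in the appropriate sense; once this is in hand the rest is a direct bookkeeping exercise with the definitions. There is no genuine analytic obstacle, so the proof will be short.
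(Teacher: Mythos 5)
Your proposal is correct and follows essentially the same route as the paper: transfer the support via the fact that $\eta^{-1}\circ\Phi$ maps intervals to intervals, and transfer the $L^{\infty}$-bound and cancellation via the identity $\Phi(\nu)(\Phi^{-1}(E))=\nu(E)$. The only detail worth making explicit (which the paper does, citing Theorem \ref{thm:thm1.1_Kenig}(4)) is that the equality of \emph{essential} suprema $\|a\circ\Phi\|_{\infty}=\|a\|_{\infty}$ uses that $\Phi$ and $\Phi^{-1}$ preserve null sets.
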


\begin{proof}
    The properties of $a$ being a $(\Lambda,\nu)$-atom are equivalent to those of $(a\circ\Phi)$ being a $(\R,\Phi(\nu))$-atom. Indeed, given $\xi_0\in\Lambda$ and $r>0$, we have that
        \[
        \xi\in B_{\Lambda}(\xi_0,r)
        \iff 
        \Phi^{-1}(\xi)\in \Phi^{-1}(B_{\Lambda}(\xi_0,r)),
        \]
        so
        \[
        \supp{a}\subset B_{\Lambda}(\xi_0,r)
        \iff 
        \supp{a\circ\Phi}
        \subset
        \Phi^{-1}(B_{\Lambda}(\xi_0,r)),
        \]
        with $\Phi^{-1}(B_{\Lambda}(\xi_0,r))=(\Phi^{-1}\circ\eta)(B(\eta^{-1}(\xi_0),r))$ an interval of $\R$.   Therefore, we need to prove that
    \[
    \|a\|_{L^{\infty}(\Lambda,\nu)}\leq \frac{1}{\nu(B_{\Lambda}(\xi_0,r))}
    \iff 
    \|a\circ\Phi\|_{L^{\infty}(\R,\Phi(\nu))}
    \leq 
    \frac{1}{\Phi(\nu)(\Phi^{-1}(B_{\Lambda}(\xi_0,r)))}.
    \]
    But this is true because, by a change of variables,
    \[
    \nu(B_{\Lambda}(\xi_0,r))
    =
    \Phi(\nu)(\Phi^{-1}(B_{\Lambda}(\xi_0,r))),
    \]
    and by Theorem \ref{thm:thm1.1_Kenig}, $\Phi\colon\R\to\Lambda$ is a bijective map that preserves sets of measure $0$, so
    \[
    \|a\circ \Phi\|_{L^\infty(\R)}
    =
    \|a\|_{L^\infty(\Lambda, \nu)}. 
    \]
    Finally, the mean $0$ property is direct because
    \[
    \int_{\R}(a\circ\Phi)(x)\, \Phi(\nu)(x)dx
    =
    \int_{\Lambda}a(\xi)\, \nu(\xi) d\sigma(\xi).\qedhere
    \]
\end{proof}

As a trivial consequence of Lemma \ref{lemma:atomEquivalence1}, we obtain:

\begin{corollary}\label{corol:firstEquivalence}
    With the above notation,
    \[
    f\in H_{at}^1(\Lambda,\nu)
    \iff
    f\circ\Phi\in H_{at}^1(\R,\Phi(\nu)),
    \]
    with
    \[
    \|f\|_{H_{at}^1(\Lambda,\nu)}
    =
    \|f\circ\Phi\|_{H_{at}^1(\R,\Phi(\nu))}.
    \]
\end{corollary}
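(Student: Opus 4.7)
The plan is to invoke the defining atomic decomposition of $H_{at}^{1}$ and transport it through $\Phi$ by means of Lemma \ref{lemma:atomEquivalence1}, which already provides the essential content: atoms on $\Lambda$ correspond bijectively to atoms on $\R$ under composition with $\Phi$ (resp.\ $\Phi^{-1}$). The proof should therefore consist of two symmetric arguments, one for each direction of the equivalence, plus a passage to the infimum to get the equality of norms.

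For the direction $f\in H_{at}^{1}(\Lambda,\nu)\Longrightarrow f\circ\Phi\in H_{at}^{1}(\R,\Phi(\nu))$, I would fix any atomic decomposition $f=\sum_{j}\lambda_{j}a_{j}$ with $(\Lambda,\nu)$-atoms $a_{j}$ and $\sum_{j}|\lambda_{j}|<\infty$, and compose with $\Phi$ to obtain $f\circ\Phi=\sum_{j}\lambda_{j}(a_{j}\circ\Phi)$. By Lemma \ref{lemma:atomEquivalence1}, each $a_{j}\circ\Phi$ is a $(\R,\Phi(\nu))$-atom, so this is a valid atomic representation of $f\circ\Phi$ with the same $\ell^{1}$ cost. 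Taking the infimum over all decompositions of $f$ gives $\|f\circ\Phi\|_{H_{at}^{1}(\R,\Phi(\nu))}\leq\|f\|_{H_{at}^{1}(\Lambda,\nu)}$. For the reverse direction, starting from a decomposition $f\circ\Phi=\sum_{j}\lambda_{j}b_{j}$ with $(\R,\Phi(\nu))$-atoms $b_{j}$, I set $a_{j}:=b_{j}\circ\Phi^{-1}$; since $a_{j}\circ\Phi=b_{j}$ is a $(\R,\Phi(\nu))$-atom, the ``only if'' half of Lemma \ref{lemma:atomEquivalence1} guarantees that $a_{j}$ is a $(\Lambda,\nu)$-atom, and composing the decomposition with $\Phi^{-1}$ yields $f=\sum_{j}\lambda_{j}a_{j}$. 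An infimum argument again gives $\|f\|_{H_{at}^{1}(\Lambda,\nu)}\leq\|f\circ\Phi\|_{H_{at}^{1}(\R,\Phi(\nu))}$, hence equality.

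There is essentially no obstacle here: the only point requiring a line of verification is that the compositional identities $f\circ\Phi=\sum_{j}\lambda_{j}(a_{j}\circ\Phi)$ and $f=\sum_{j}\lambda_{j}(b_{j}\circ\Phi^{-1})$ make sense in a strong enough topology, which is immediate from Theorem \ref{thm:thm1.1_Kenig} (so that $\Phi\colon\R\to\Lambda$ is a homeomorphism preserving null sets) and the fact that $\|a\circ\Phi\|_{L^{1}(\R,\Phi(\nu))}=\|a\|_{L^{1}(\Lambda,\nu)}$ by a change of variables, which makes the atomic series converge in $L^{1}$ on both sides. Beyond this bookkeeping, the result is a formal consequence of Lemma \ref{lemma:atomEquivalence1} and justifies the ``trivial'' labeling in the text.
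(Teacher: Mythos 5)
Your proposal is correct and is exactly the argument the paper intends: the corollary is stated there as a ``trivial consequence'' of Lemma \ref{lemma:atomEquivalence1}, with no written proof, and the intended reasoning is precisely your transport of atomic decompositions through $\Phi$ in both directions followed by taking infima. Your extra remark on $L^1$-convergence of the transported series via the change of variables is a reasonable bit of bookkeeping that the paper leaves implicit.
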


As we explained above, we would like to obtain a relation between $\|f\|_{H_{at}^1(\Lambda,\nu)}$ and $\|\mathcal{T}_N(f)\|_{H_{at}^1(\R,\mu)}$ for a certain measure $\mu$, and we have proved the relation $\|f\|_{H_{at}^1(\Lambda,\nu)}=\|f\circ\Phi\|_{H_{at}^1(\R,\Phi(\nu))}$. Here is where the following theorem enters into play. 

\begin{theorem}\label{thm:equivalenceWeightedH1}
    Let $w\in A_{\infty}(\R^n)$. Then 
    \[
    H_{at}^1(\R^n,w)
    =
    \{fw^{-1}: f\in H_{at}^1(\R^n,dx)\},
    \]
    with equivalence of norms. In other words, the map $f\mapsto fw$ is an isomorphism between the weighted Hardy space $H_{at}^1(\R^n,w)$ and the classical (unweighted) Hardy space $H_{at}^1(\R^n,dx)$.
\end{theorem}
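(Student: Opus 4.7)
The plan is to prove both set-theoretic inclusions by working directly with atoms, appealing to the reverse Hölder property of $A_\infty$-weights in one direction and to the Muckenhoupt $A_p$ condition in the other. The key external ingredient I would rely on is the classical Coifman--Weiss equivalence of $L^\infty$- and $L^q$-atomic characterizations of $H_{at}^{1}$ for spaces of homogeneous type, applied to both $(\R^n, dx)$ and $(\R^n, w\,dx)$.

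For the forward inclusion, i.e.\ that $fw \in H_{at}^{1}(\R^n, dx)$ whenever $f \in H_{at}^{1}(\R^n, w)$, I would start from an atomic decomposition $f = \sum_j \lambda_j a_j$ and verify the claim on a single atom $a$ supported in a ball $B$. Setting $g := aw$, the support and cancellation conditions transfer trivially: $\supp{g} \subset B$ and $\int g\,dx = \int a w\, dx = 0$. Since $w \in A_\infty$, reverse Hölder yields some $q > 1$, depending only on $w$, such that $(|B|^{-1}\int_B w^q)^{1/q} \lesssim w(B)/|B|$. Then
\[
\|g\|_{L^q(dx)}^{q}
\leq \|a\|_\infty^q \int_B w^q\, dx
\lesssim \frac{1}{w(B)^q} \cdot \frac{w(B)^q}{|B|^{q-1}}
= \frac{1}{|B|^{q-1}},
\]
so $g$ is a uniform multiple of a classical $(1,q,0)$-atom of $H^1(\R^n, dx)$, and summing yields $\|fw\|_{H^1(\R^n, dx)} \lesssim \|f\|_{H_{at}^1(\R^n, w)}$.

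For the reverse inclusion, since $w \in A_\infty = \bigcup_{p<\infty} A_p$, I would pick $p$ with $w \in A_p$ and set $q = p'$. Given a classical $L^\infty$-atom $b$ supported in $B$, I would consider $a := b/w$; again the support is preserved and $\int a w\,dx = \int b\,dx = 0$. Rewriting the $A_p$ condition as $\int_B w^{1-q}\, dx \lesssim |B|^q w(B)^{1-q}$ gives
\[
\int_B |a|^q w\, dx
= \int_B |b|^q w^{1-q}\, dx
\leq \frac{1}{|B|^q}\int_B w^{1-q}\, dx
\lesssim w(B)^{1-q},
\]
so $a$ is a uniform multiple of a $(1,q,0)$-atom of $H^{1}_{at}(\R^n, w\, dx)$ in the Coifman--Weiss sense for the space of homogeneous type $(\R^n, |\cdot|, w\,dx)$. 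Summing the atomic decomposition of $g$ then gives $\|g/w\|_{H_{at}^1(\R^n, w)} \lesssim \|g\|_{H^1(\R^n, dx)}$.

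The main obstacle I anticipate is justifying the passage between $L^\infty$- and $L^q$-atomic decompositions in both the classical and the Coifman--Weiss (homogeneous-type) settings, which is what lets us conclude $H^1$-membership from an $L^q$-atom bound rather than an $L^\infty$ one. This is a well-known result of Coifman--Weiss that I would cite rather than reprove. Once it is in place, both halves of the proof reduce to routine bookkeeping with atoms---checking support, cancellation, and the size condition---combined with the single quantitative input on each side (reverse Hölder in one direction, $A_p$ in the other).
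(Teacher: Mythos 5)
Your argument is correct, and it is essentially the standard proof of the result that the paper itself does not prove but simply cites (Theorem 2.25 of Coifman--Weiss, or Theorem III.1.2 of Garc\'ia-Cuerva): atoms go to $L^q$-atoms via reverse H\"older in one direction and via the $A_p$ condition (with $q=p'$) in the other, and one then invokes the Coifman--Weiss equivalence of $L^q$- and $L^\infty$-atomic characterizations, which applies because $w\,dx$ is doubling for $w\in A_\infty$. The remaining details you defer (convergence of $\sum_j \lambda_j a_j w$ to $fw$ in $L^1$, justified by $\|a_j w\|_{L^1(dx)}=\|a_j\|_{L^1(w)}\leq 1$, and similarly in the reverse direction) are indeed routine.
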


\begin{proof}
    See Theorem 2.25 in \cite{coifmanWeiss} or Theorem III 1.2 in \cite{GarciaCuerva}.
\end{proof}

\begin{corollary}\label{corol:equivalenciaNormaH1DatoNeumann}
    Let $\nu\in A_{\infty}(\Lambda)$ and suppose $\nu\circ\Phi\in A_{\infty}$. Then
    \[
    f\in H_{at}^1(\Lambda,\nu)
    \iff 
    \mathcal{T}_N(f)=(f\circ\Phi)|\Phi'|
    \in
    H_{at}^1(\R,(\nu\circ\Phi)),
    \]
    with the equivalence of norms \eqref{eq:equivalenciaDatoH1}.
\end{corollary}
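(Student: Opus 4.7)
The plan is to factor both sides through the unweighted Hardy space $H_{at}^{1}(\R,dx)$, using Corollary \ref{corol:firstEquivalence} to push the problem from $\Lambda$ to $\R$ (picking up the density $\Phi(\nu) = (\nu\circ\Phi)|\Phi'|$), and then applying Theorem \ref{thm:equivalenceWeightedH1} \emph{twice}, once with the weight $\Phi(\nu)$ and once with the weight $\nu\circ\Phi$. The point is that the two resulting unweighted norms turn out to be the $H_{at}^{1}(\R,dx)$-norm of the same function, namely $(f\circ\Phi)(\nu\circ\Phi)|\Phi'|$.

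More explicitly, I first apply Corollary \ref{corol:firstEquivalence} to obtain
\[
\|f\|_{H_{at}^{1}(\Lambda,\nu)}
=
\|f\circ\Phi\|_{H_{at}^{1}(\R,\Phi(\nu))}.
\]
Since $\nu\in A_{\infty}(\Lambda)$, Lemma \ref{phiNuAInfty} guarantees $\Phi(\nu)\in A_{\infty}(\R)$, so Theorem \ref{thm:equivalenceWeightedH1} applied with weight $w = \Phi(\nu) = (\nu\circ\Phi)|\Phi'|$ gives
\[
\|f\circ\Phi\|_{H_{at}^{1}(\R,\Phi(\nu))}
\simeq
\bigl\|(f\circ\Phi)\,(\nu\circ\Phi)\,|\Phi'|\bigr\|_{H_{at}^{1}(\R,dx)}.
\]
On the other hand, the standing hypothesis $\nu\circ\Phi\in A_{\infty}$ lets me apply Theorem \ref{thm:equivalenceWeightedH1} with weight $w = \nu\circ\Phi$ to $\mathcal{T}_N(f) = (f\circ\Phi)|\Phi'|$, yielding
\[
\|\mathcal{T}_N(f)\|_{H_{at}^{1}(\R,\nu\circ\Phi)}
\simeq
\bigl\|(f\circ\Phi)\,|\Phi'|\,(\nu\circ\Phi)\bigr\|_{H_{at}^{1}(\R,dx)}.
\]

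The right-hand sides of the two displays above are the $H_{at}^{1}(\R,dx)$-norm of the same function, so chaining the equivalences delivers both the norm equivalence \eqref{eq:equivalenciaDatoH1} and the ``iff'' statement (each intermediate step is a genuine isomorphism of spaces). There is no real obstacle; the argument is essentially bookkeeping, and it succeeds because the factor $|\Phi'|$ plays a dual role — as part of the density of the conformal pullback of $\nu$, and as the multiplicative factor appearing in $\mathcal{T}_N$ — in such a way that the ``conformal transfer'' and the ``weight transfer'' collapse into the same unweighted statement. The only hypothesis that is genuinely used, beyond $\nu\in A_{\infty}(\Lambda)$, is precisely $\nu\circ\Phi\in A_{\infty}$, which is what makes the second application of Theorem \ref{thm:equivalenceWeightedH1} legal.
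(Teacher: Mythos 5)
Your proposal is correct and follows exactly the paper's own argument: transfer to $\R$ via Corollary \ref{corol:firstEquivalence}, then apply Theorem \ref{thm:equivalenceWeightedH1} twice (with weights $\Phi(\nu)$ and $\nu\circ\Phi$) and observe that both reductions land on the same unweighted function $(f\circ\Phi)(\nu\circ\Phi)|\Phi'|$. Nothing further is needed.
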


\begin{proof}
By Corollary \ref{corol:firstEquivalence},
\[
f\in H_{at}^1(\Lambda,\nu)
\iff
f\circ\Phi\in H_{at}^1(\R,\Phi(\nu)),
\]
with equality of norms. Then, since $\Phi(\nu), (\nu\circ\Phi)\in A_{\infty}$, we can apply Theorem \ref{thm:equivalenceWeightedH1} twice to obtain
\[
f\circ \Phi\in H^1_{at}(\R,\Phi(\nu))
\iff
(f\circ \Phi)\Phi(\nu)\in H_{at}^1(\R,dx)\iff (f\circ\Phi)|\Phi'|
\in
H_{at}^1(\R,(\nu\circ\Phi))
\]
with equivalence of norms. \qedhere
\end{proof}

\subsection{Proof of Theorem \ref{thm:H1solvability}}

The proof is the same as that of Theorem \ref{thm:Lp_Solvability}, using instead the estimates developed in this section.

We just emphasize that, to prove uniqueness, Theorem 2.5 in \cite{transmissionCarro} works for $p=1$, because if $f\in L^1(\R,w)$ and $w\in A_1$, then $\int_{\R}\frac{|f(x)|}{1+|x|}\, dx<\infty$. This, together with the fact that $\int_{\R}w(x)\, dx=\infty$ for $w\in A_{\infty}$ and the definition of $A_1$-weight, makes their argument valid in this case too.

\section{The Neumann Problem in \texorpdfstring{$L^{p_{\pm},1}(\Lambda,\nu)$}{}}

The goal of this section is to prove Theorem \ref{thm:Lp1_Solvability} following the strategy of Theorem 1.5 in \cite{CNO}. However, new difficulties arise in this more general context. First, the problem that we need to study in $\R_{+}^{2}$ gives rise to the two-weight estimate \eqref{eq:twoWeightBound}. Second, the transference of the non-tangential maximal estimate from $\R_{+}^{2}$ to $\Omega$ is more subtle in this case, and will make use of the property about logarithms stated in Lemma \ref{thm:teoremaGarnettAdaptado}.

\subsection{Equivalent problem in \texorpdfstring{$\R_{+}^{2}$}{}}

Let $g\in L^{p,1}(\Lambda, \nu)$ and let $\mathcal{T}_{N}(g)=(g\circ\Phi)|\Phi'|$ be the corresponding Neumann datum in $\R_{+}^{2}$. By a change of variables,
\[
\|g\|_{L^{p,1}(\Lambda, \nu)}
=
\|g\circ\Phi\|_{L^{p,1}(\R,\Phi(\nu))}
=
\left\|
\frac{\mathcal{T}_{N}(g)}{|\Phi'|}
\right\|_{L^{p,1}(\R,\Phi(\nu))}.
\]
Therefore, if we want to pose a problem in $\R_{+}^{2}$ that allows us to solve the $L^{p,1}(\Lambda,\nu)$-problem in $\Omega$ by composing with a conformal mapping, the space $X$ from which we must take the boundary data for the Neumann problem is $\mathcal{L}^{p,1}(|\Phi'|,\Phi(\nu))$, formed by those $f$ with
\[
\|f\|_{\mathcal{L}^{p,1}(|\Phi'|,\Phi(\nu))}
\coloneqq 
\left\|
\frac{f}{|\Phi'|}
\right\|_{L^{p,1}(\R,\Phi(\nu))}
<
\infty.
\]
And the space $Y$ that we choose is $\mathcal{L}^{p,\infty}(|\Phi'|,\Phi(\nu))$, formed by those $h$ with
\[
\|h\|_{\mathcal{L}^{p,\infty}(|\Phi'|,\Phi(\nu))}
\coloneqq 
\left\|
\frac{h}{|\Phi'|}
\right\|_{L^{p,\infty}(\R,\Phi(\nu))}
<
\infty.
\]

For the Neumann integral of $f$ to be well-defined, we need $\int_{\R}\frac{|f(x)|}{1+|x|}\, dx<\infty$. By Hölder,
\begin{equation}\label{eq:wellDefinedEndpointHolder}
\int_{\R}\frac{|f(x)|}{1+|x|}\, dx
\lesssim
\|f\|_{\mathcal{L}^{p,1}(\R,(|\Phi'|,\Phi(\nu)))}
\left\|\frac{(\nu\circ\Phi)}{1+|\cdot|}\right\|_{L^{p',\infty}(\R,\Phi(\nu))}.       
\end{equation}
So it suffices to have
\begin{equation}\label{eq:finiteNormWeights}
    \left\|\frac{(\nu\circ\Phi)}{1+|\cdot|}\right\|_{L^{p',\infty}(\R,\Phi(\nu))}
    <
    \infty.
\end{equation}

\begin{lemma} \label{lemma:wellDefined_Sp+R} Let $p\in (p_-, p_+]$.  If $(|\Phi'|,\Phi(\nu))\in \mathcal{S}_{p}^{\mathcal{R}}$, then   $\int_{\R}\frac{|f(x)|}{1+|x|}\, dx<\infty$ for $f\in \mathcal{L}^{p,1}(|\Phi'|,\Phi(\nu))$.
\end{lemma}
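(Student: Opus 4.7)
The plan is to apply H\"older's inequality in Lorentz spaces, exactly as in \eqref{eq:wellDefinedEndpointHolder}, to reduce the problem to showing that the second factor in that inequality — an appropriate $L^{p',\infty}(\Phi(\nu))$-norm of a dual weight — is finite. Writing $dx=\Phi(\nu)^{-1}\,d\Phi(\nu)$ in the integrand $|f|/(1+|x|)$ identifies this dual weight as
\[
H(x)\coloneqq \frac{1}{(1+|x|)(\nu\circ\Phi)(x)},
\]
so the task reduces to proving $\|H\|_{L^{p',\infty}(\Phi(\nu))}<\infty$.

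For the interior range $p\in(p_-,p_+)$ this is essentially immediate: by definition of $R(\nu)$, $|\Phi'|^{-p}\Phi(\nu)\in A_p$; any $f\in\mathcal L^{p,1}(|\Phi'|,\Phi(\nu))$ satisfies $f/|\Phi'|\in L^{p,1}(\Phi(\nu))\subset L^p(\Phi(\nu))$ with respect to the same measure, which is the same as $f\in L^p(|\Phi'|^{-p}\Phi(\nu))$, so Lemma \ref{lemma:estimateNeumannIntegral} applied to the $A_p$ weight $|\Phi'|^{-p}\Phi(\nu)$ gives the conclusion at once. Thus the genuine content of the lemma is at the endpoint $p=p_+$ (and symmetrically $p=p_-$).

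For $p=p_+$ I would invoke the duality of Proposition~\ref{prop:dualitySpR}(1), which rewrites the hypothesis as $((\nu\circ\Phi),\Phi(\nu))\in\mathcal S_{p'}^{\mathcal R}$. Testing this dual inequality against $g=\mathds 1_I$ with the trivial one-element sparse family $\mathcal S=\{I\}$, for which $\mathcal A_{\{I\}}(\mathds 1_I(\nu\circ\Phi))=\mathds 1_I\,(\nu\circ\Phi)(I)/|I|$, produces the universal testing bound
\[
\left\|\frac{\mathds 1_I}{\nu\circ\Phi}\right\|_{L^{p',\infty}(\Phi(\nu))}
\;\lesssim\;\frac{|I|\,\Phi(\nu)(I)^{1/p'}}{(\nu\circ\Phi)(I)},
\qquad I\subset\R\ \text{any interval.}
\]
Combined with the dyadic pointwise bound $H\lesssim \sum_{k\ge 0}2^{-k}\mathds 1_{I_k}/(\nu\circ\Phi)$ on $I_k=[-2^k,2^k]$, and the fact that $L^{p',\infty}$ is equivalent to a Banach norm when $p'<\infty$, one obtains
\[
\|H\|_{L^{p',\infty}(\Phi(\nu))}
\;\lesssim\;\sum_{k\ge 0}\frac{\Phi(\nu)(I_k)^{1/p'}}{(\nu\circ\Phi)(I_k)}.
\]

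The main obstacle will be establishing the convergence of this last series. To do so I would combine the $A_\infty$ property of $\Phi(\nu)$ (Lemma~\ref{phiNuAInfty}) with the $A_2$ property of $|\Phi'|$: the reverse H\"older inequality for $\Phi(\nu)$ together with the doubling of both $\Phi(\nu)$ and $|\Phi'|$ should let one compare $\Phi(\nu)(I_k)^{1/p'}$ with $(\nu\circ\Phi)(I_k)$ on dyadic scales and extract geometric decay of the summands. Since $\mathcal S_{p_+}^{\mathcal R}$ is strictly weaker than any $A_q$ condition with $q\le p_+$, the argument has to be tight at precisely this step, and it is there that the endpoint hypothesis is used in full.
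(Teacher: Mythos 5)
Your setup is the same as the paper's: the H\"older reduction \eqref{eq:wellDefinedEndpointHolder}, the identification of the dual weight $H=\bigl((1+|x|)(\nu\circ\Phi)\bigr)^{-1}$ (the paper's display \eqref{eq:finiteNormWeights} has a typo there, but its proof and your computation agree), the dispatch of the interior range via Lemma \ref{lemma:estimateNeumannIntegral}, and the passage to $(\nu\circ\Phi,\Phi(\nu))\in\mathcal{S}_{p'}^{\mathcal{R}}$ through Proposition \ref{prop:dualitySpR}(1). The divergence comes at the endpoint step, and there your route breaks down. Testing the dual inequality on single intervals and then summing $\|\cdot\|_{L^{p',\infty}(\Phi(\nu))}$ over the dyadic pieces $I_k=[-2^k,2^k]$ is too lossy: the resulting series $\sum_k \Phi(\nu)(I_k)^{1/p'}/(\nu\circ\Phi)(I_k)$ does \emph{not} converge at $p=p_+$ in general. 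For the cone with $\nu(\xi)=|\xi|^{\beta}$ one has $u=|x|^{\alpha-1}$, $v=|x|^{\alpha\beta+\alpha-1}$, $\nu\circ\Phi=|x|^{\alpha\beta}$, hence $\Phi(\nu)(I_k)^{1/p'}/(\nu\circ\Phi)(I_k)\simeq 2^{k[(\alpha\beta+\alpha)/p'-(\alpha\beta+1)]}$, and at $p=p_+=\frac{\alpha(\beta+1)}{\alpha-1}$ the exponent is exactly $0$: every summand is comparable to $1$ and the series diverges, even though $H$ does belong to $L^{p_+',\infty}(\Phi(\nu))$ in that example. This is the usual failure of the triangle inequality for weak norms at a critical exponent; no amount of reverse H\"older or doubling for $\Phi(\nu)$ and $|\Phi'|$ will rescue the summed bound.

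The paper sidesteps the decomposition entirely. It writes $\frac{1}{1+|x|}\simeq \mathcal{M}_{hl}(\mathds{1}_{[-1,1]})$, so that with $h:=\mathds{1}_{[-1,1]}\,u v^{-1}$ one has $H\simeq \mathcal{M}_{hl}(h\cdot u^{-1}v)/(u^{-1}v)$, dominates $\mathcal{M}_{hl}$ by a single sparse operator, and applies the full two-weight bound for $(u^{-1}v,v)\in\mathcal{S}_{p'}^{\mathcal{R}}$ \emph{once}, to this one compactly supported function; this yields $\|H\|_{L^{p',\infty}(v)}\lesssim\|h\|_{L^{p',1}(v)}$ with no summation of weak norms. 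The only thing left is $h\in L^{p',1}(v)$, which follows because $vu^{-q}\in A_q$ for $q\in(p_-,p_+)$ forces the dual weight $v^{1-q'}u^{q'}$ to be locally integrable, so $h\in L^{q'}(v)$ for all such $q$, and compact support upgrades this to $L^{p_+',1}(v)$ since $p_+'<q'$. If you replace your dyadic summation with this single-application-of-the-sparse-bound argument, the rest of your write-up goes through.
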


\begin{proof} 

Let $(u,v):=(|\Phi'|,\Phi(\nu))$. We check \eqref{eq:finiteNormWeights}. Since $(u,v)\in \mathcal{S}_{p}^{\mathcal{R}}$, then $(u^{-1}v,v)\in \mathcal{S}_{p'}^{\mathcal{R}}$ by Proposition \ref{prop:dualitySpR}. Hence, 
\begin{equation}\label{eq:dualEstimate}
\left\|
\frac{\mathcal{A}_{\mathcal{S}}(h\cdot u^{-1}v)}{u^{-1}v}
\right\|_{L^{p',\infty}(\R,v)}
\lesssim 
\|h\|_{L^{p',1}(\R,v)}.    
\end{equation}

Now, we have that 
\[
\frac{1}{1+|x|}
\simeq 
\mathcal{M}_{hl}(\mathds{1}_{[-1,1]})(x),\quad x\in\R,
\]
and thus,  if we choose $h\coloneqq \mathds{1}_{[-1,1]}uv^{-1}$, we have that
\[
\left\|\frac{uv^{-1}}{1+|\cdot|}\right\|_{L^{p',\infty}(v)}
\simeq
\left\|
\frac{\mathcal{M}_{hl}(h\cdot u^{-1}v)}{u^{-1}v}
\right\|_{L^{p',\infty}(\R,v)}
\lesssim 
\left\|
\frac{\mathcal{A}_{\mathcal{S}}(h\cdot u^{-1}v)}{u^{-1}v}
\right\|_{L^{p',\infty}(\R,v)}
\lesssim 
\|h\|_{L^{p',1}(\R,v)}.
\]
Consequently, it remains to prove that, under our hypothesis,  $h\in L^{p',1}(v)$. Now, for every  $p\in (p_{-},p_{+})$, we have that $vu^{-p}\in A_p(\R)$ and hence the dual weight $v^{1-p'}u^{p'}\in L^1_{loc}$. Therefore,  $h\in L^{p'}(v)$ for every  $p\in (p_{-},p_{+})$, and thus $h\in L^{p', 1}(v)$. Finally, since $p_{+}>p$ for $p\in (p_{-},p_{+})$, then $p_{+}^{\prime}<p'$ and hence, since $h$ has compact support, 
$h\in L^{p_{+}^{\prime}, 1}(v)$, and the result follows.
\end{proof}

\begin{remark}
In the case $p=p_-$, \eqref{eq:finiteNormWeights} has to be imposed in order to have a well-defined Neumann integral. This condition is equivalent to the following inequality for every measurable set $E$: 
\begin{equation}\label{eq:conditionForWellDefined}
 \int_E \frac{|\Phi'(x)|}{1+|x|} dx \lesssim \left(\Phi(\nu)(E)\right)^{1/p_-'}.
\end{equation}
\end{remark}

\subsection{Proof of Theorem \ref{thm:Lp1_Solvability}} We assume that $p_{+}<\infty$ and $p_{-}>1$. In this way, we can treat both cases at the same time, taking $\bm{p}\in \{p_{-},p_{+}\}$.

Let $g\in L^{\bm{p},1}(\Lambda, \nu)$. We have to solve the following problem in $\Omega$
\[
\begin{cases}
    \Delta v=0 & \text{in }\Omega,\\
    \partial_{\mathbf{n}}v=0 & \text{on }\partial \Omega=\Lambda,\\
    \|\mathcal{M}_{\alpha}(\nabla v)\|_{L^{\bm{p},\infty}(\Lambda, \nu)}
    \lesssim 
    \|g\|_{L^{\bm{p},1}(\Lambda, \nu)}.
\end{cases}
\]
To do so, consider the problem  in $\R_{+}^{2}$
\begin{equation}\label{eq:equivalentProblemEndpointCase}
    \begin{cases}
    \Delta u=0 & \text{ in }\R_{+}^{2}\\
    -\partial_y u=\mathcal{T}_{N}(g) & \text{ on }\R_{+}^{2}\\
    \|\mathcal{M}(\nabla u)\|_{\mathcal{L}^{p{+},\infty}(|\Phi'|,\Phi(\nu))}\lesssim \|\mathcal{T}_{N}(g)\|_{\mathcal{L}^{p{+},1}(|\Phi'|,\Phi(\nu))}.
    \end{cases}
\end{equation}
 Since $(|\Phi'|,\Phi(\nu))\in \mathcal{S}_{\bm{p}}^{\mathcal{R}}$, a solution is given by  $u=u_{\mathcal{T}_{N}(g)}$ which is well-defined as a consequence of \eqref{eq:finiteNormWeights} (it is assumed for $\bm{p}=p_{-}$, and for $\bm{p}=p_{+}$ it follows from Lemma \ref{lemma:wellDefined_Sp+R}). Therefore, we have
\[
\|\mathcal{M}_{\alpha}(\nabla u)\|_{\mathcal{L}^{\bm{p},\infty}(\R,(|\Phi'|,\Phi(\nu)))}
\lesssim 
\|\mathcal{T}_{N}(g)\|_{\mathcal{L}^{\bm{p},1}(\R,(|\Phi'|,\Phi(\nu)))}
=
\|g\|_{L^{\bm{p},1}(\Lambda,\nu)}.
\]
We know that $v=u\circ\Phi^{-1}$ is a solution to the Neumann problem with datum $g$ on $\Omega$, so the only thing we have to prove is the non-tangential maximal estimate
\[
\|\mathcal{M}_{\alpha}(\nabla v)\|_{L^{\bm{p},\infty}(\Lambda, \nu)}
\lesssim 
\|g\|_{L^{\bm{p},1}(\Lambda, \nu)}.
\]
Therefore, it is enough to prove that 
\[
\|\mathcal{M}_{\alpha}(\nabla v)\|_{L^{\bm{p},\infty}(\Lambda,\nu)}
\lesssim 
\|\mathcal{M}_{\alpha}(\nabla u)\|_{\mathcal{L}^{\bm{p},\infty}(\R,(|\Phi'|,\Phi(\nu)))}.
\]
First, let us rewrite the left-hand side. As usual, let 
\[
F\coloneqq \partial_1 u-i\partial_2 u
\quad\text{ in }\R_{+}^{2}
\quad
\text{ and }
\quad 
G\coloneqq \partial_1 v-i\partial_2 v
\quad \text{ in }\Omega.
\]
Then $F$ and $G$ are holomorphic in $\R_{+}^{2}$ and $\Omega$ respectively, and they satisfy
\[
|F|=|\nabla u|
\quad\text{ in }\R_{+}^{2}
\quad
\text{ and }
\quad 
|G|=|\nabla v|
\quad\text{ in }\Omega
\]
and
\[
G\circ\Phi=\frac{F}{\Phi'}\quad \text{ in }\R_{+}^{2}.
\]
Then, by Theorem \ref{thm2.8_Kenig},
\[
\|\mathcal{M}_{\alpha}(\nabla v)\|_{L^{\bm{p},\infty}(\Lambda,\nu)}
=
\|\mathcal{M}_{\alpha}(G)\|_{L^{\bm{p},\infty}(\Lambda,\nu)}
\simeq 
\|\mathcal{M}_{\alpha}(G\circ\Phi)\|_{L^{\bm{p},\infty}(\R,\Phi(\nu))}
=
\left\|\mathcal{M}_{\alpha}\left(\frac{F}{\Phi'}\right)\right\|_{L^{\bm{p},\infty}(\R,\Phi(\nu))}.
\]
So the inequality that we need to prove is
\begin{equation}\label{eq:neededInequalityEndpoint}
\left\|\mathcal{M}_{\alpha}\left(\frac{F}{\Phi'}\right)\right\|_{L^{\bm{p},\infty}(\R,\Phi(\nu))}
\lesssim 
\left\|\frac{\mathcal{M}_{\alpha}(F)}{\Phi'}\right\|_{L^{\bm{p},\infty}(\R,\Phi(\nu))}.    
\end{equation}

To do so, we use an approximation argument. Since $\mathcal{T}_{N}(g)\in \mathcal{L}^{\bm{p},1}(\R,\Phi(\nu))<\infty$, then $\frac{\mathcal{T}_N(g)}{|\Phi'|}\in L^{\bm{p},1}(\R,\Phi(\nu))$. By Lemma 2.2 (b) in \cite{CNO}, there exists $\{g_m\}\subset C_c(\R)\cap L^{\bm{p},1}(\R,\Phi(\nu))$ such that 
\[
\left\|
\frac{\mathcal{T}_N(g)}{|\Phi'|}-g_m
\right\|_{L^{\bm{p},1}(\R,\Phi(\nu))}
\xrightarrow{m\to \infty}
0.
\]
So $f_m\coloneqq g_m\cdot |\Phi'|\in \mathcal{L}^{\bm{p},\infty}(\R,(|\Phi'|,\Phi(\nu)))$ satisfies
\[
\|\mathcal{T}_N(g)-f_m\|_{\mathcal{L}^{\bm{p},1}(\R,(|\Phi'|,\Phi(\nu)))}
\xrightarrow{m\to\infty}
0.
\]

Let $u_m\coloneqq u_{f_m,N}$. Notice that $f_m\in L^p(\R,|\Phi'|^{-p}\Phi(\nu))$ for any $p\in (1,\infty)$, because
\[
\int_{\R}|f_m|^p|\Phi'|^{-p}\Phi(\nu)\, dx 
=
\int_{\R}|\Phi'|^{p}|g_m|^p|\Phi'|^{-p}\Phi(\nu)\, dx
\leq 
\|g_m\|_{\infty}^p\int_{\supp{g_m}}\Phi(\nu)\, dx
<\infty. 
\]
Besides, by definition of $p_{-}$ and $p_{+}$, $|\Phi'|^{-p}\Phi(\nu)\in A_p(\R)$ for every $p\in (p_{-},p_{+})$. Therefore, we have
\[
\|\mathcal{M}_{\alpha}(\nabla u_m)\|_{L^p(\R,|\Phi'|^{-p}\Phi(\nu))}
\lesssim 
\|f_m\|_{L^p(\R,|\Phi'|^{-p}\Phi(\nu))},
\quad \forall p\in (p_{-},p_{+}).
\]

Let $F_m\coloneqq \partial_{1}u_m-i\partial_2 u_m$, $m\in\N$. The previous inequality tells us that
\[
\|F_m\|_{H^p(\R_{+}^{2},|\Phi'|^{-p}\Phi(\nu))}
\lesssim 
\|f_m\|_{L^p(\R,|\Phi'|^{-p}\Phi(\nu))}
=
\|g_m\|_{L^p(\R,\Phi(\nu))}.
\]
Since $F_m\in H^p(\R_{+}^{2},|\Phi'|^{-p}\Phi(\nu))$ with $|\Phi'|^{-p}\Phi(\nu)\in A_{\infty}$, by Theorem \ref{thm:analogoLema4.1CNO} we obtain $\frac{F_m}{\Phi'}\in H^p(\R_{+}^{2},\Phi(\nu))$ with
\[
\|F_m\|_{H^p(\R_{+}^{2},|\Phi'|^{-p}\Phi(\nu))}
\simeq 
\left\|
\frac{F_m}{\Phi'}
\right\|_{H^p(\R_{+}^{2},\Phi(\nu))}.
\]
Denote by $h_m$ the boundary values of $F_m$. Since $F_m\in H^p(\R_{+}^{2},|\Phi'|^{-p}\Phi(\nu))$ with $|\Phi'|^{-p}\Phi(\nu)\in A_p(\R)$ for any $p\in (p_{-},p_{+})$, then Lemma \ref{thm:teoremaGarnettAdaptado} gives us that
\[
\log{|F_m(y_1,y_2)|}
\leq 
P_{y_2}\ast \log|h_m|(y_1),\quad (y_1,y_2)\in \R_{+}^{2}.
\]
Since $\log{|\Phi'(y_1,y_2)|}=P_{y_2}\ast \log{|\Phi'|}(y_1)$ (see for example \cite{AE_FBY}), we have that
\[
\log{|F_m(y_1,y_2)|}-\log{|\Phi'(y_1,y_2)|}
\leq 
P_{y_2}\ast \log{|h_m|}(y_1)-P_{y_{2}}\ast \log{|\Phi'|}(y_1).
\]
That is,
\[
\log{\left|\frac{F_m(y_1,y_2)}{\Phi'(y_1,y_2)}\right|}
\leq 
P_{y_2}\ast \log{\left|\frac{h_m}{\Phi'}\right|}(y_1).
\]
Multiplying by $a>0$, 
\[
\log{\left|\frac{F_m(y_1,y_2)}{\Phi'(y_1,y_2)}\right|^{a}}
\leq 
P_{y_2}\ast \log{\left|\frac{h_m}{\Phi'}\right|^{a}}(y_1).
\]
Taking exponentials and using Jensen's inequality, we obtain
\begin{align*}
    \left|\frac{F_m(y_1,y_2)}{\Phi'(y_1,y_2)}\right|^{a}
    \leq 
    e^{P_{y_2}\ast \log{|\frac{h_m}{\Phi'}|^{a}}(y_1)}
    \leq 
    (P_{y_2}\ast e^{\log{|\frac{h_m}{\Phi'}|^{a}}})(y_1)
    =
    \left(P_{y_2}\ast \left|\frac{h_m}{\Phi'}\right|^{a}\right)(y_1).
\end{align*}
So
\[
\left|
\frac{F_m(y_1,y_2)}{\Phi'(y_1,y_2)}
\right|
\leq 
\left(P_{y_2}\ast \left|\frac{F_m}{\Phi'}\right|^{a}(y_1)\right)^{\frac{1}{a}},
\]
which by Theorem \ref{thm:Stein_thm1Page197} implies 
\[
\mathcal{M}_{\alpha}\left(\frac{F_m}{\Phi'}\right)
\leq 
\left(\mathcal{M}_{\alpha}\left(P_{y_2}\ast \left|\frac{h_m}{\Phi'}\right|^{a}\right)\right)^{\frac{1}{a}}
\lesssim 
\left(\mathcal{M}_{hl}\left(\left|\frac{h_m}{\Phi'}\right|^{a}\right)\right)^{\frac{1}{a}}.
\]
Therefore,
\begin{align*}
    \left\|\frac{F_m}{\Phi'}\right\|_{H^{\bm{p},\infty}(\R_{+}^{2},\Phi(\nu))}
    \lesssim 
    \left\|
    \mathcal{M}_{hl}\left(\left|\frac{h_m}{\Phi'}\right|^{a}\right)^{\frac{1}{a}}
    \right\|_{L^{\bm{p},\infty}(\R,\Phi(\nu))}
    =
    \left\|\mathcal{M}_{hl}\left(\left|\frac{h_m}{\Phi'}\right|^{a}\right)\right\|_{L^{\frac{\bm{p}}{a},\infty}(\R,\Phi(\nu))}^{\frac{1}{a}}.
\end{align*}
We choose $a>0$ sufficiently small so that $q\coloneqq \frac{\bm{p}}{a}$ is sufficiently big so that $\Phi(\nu)\in A_q(\R)$. Then, we have $\mathcal{M}_{hl}\colon L^{q,\infty}(\Phi(\nu))\to L^{q,\infty}(\Phi(\nu))$. Therefore, the above can be estimated by
\begin{equation}\label{eq:referenciaParaElPasoLímite}
    \left\|\frac{F_m}{\Phi'}\right\|_{H^{\bm{p},\infty}(\R_{+}^{2},\Phi(\nu))}
    \lesssim 
    \left\|
    \left|\frac{h_m}{\Phi'}\right|^{a}
    \right\|_{L^{\frac{\bm{p}}{a},\infty}(\R,\Phi(\nu))}^{\frac{1}{a}}
    =
    \left\|\frac{h_m}{\Phi'}\right\|_{L^{\bm{p},\infty}(\R,\Phi(\nu))}
    \leq 
    \left\|\frac{\mathcal{M}_{\alpha}(F_m)}{|\Phi'|}\right\|_{L^{\bm{p},\infty}(\R,\Phi(\nu))}
\end{equation}
obtaining inequality \eqref{eq:neededInequalityEndpoint} for the approximations $F_m$.

The limiting argument now follows as in pages 44 and 45 of \cite{CNO} with the obvious changes.

\section{The Regularity Problem}\label{section:regularity}

In this section, we show how to adapt the results for the Neumann problem to the Regularity problem. As usual, we begin with the case of the upper half-plane.

\subsection{The regularity problem in \texorpdfstring{$\R_{+}^{2}$}{}}

\begin{lemma}\label{lemma:calculoGradienteRegularidad}
    Let $f\in L^1_{loc}(\R)$ with weak derivative $f'$ such that
     \begin{equation}\label{eq:integrabilityWeakDerivative}
    \int_{\R}\frac{|f(x)|}{1+|x|^2}\, dx<\infty\quad{and}\quad  \int_{\R}\frac{|f'(x)|}{1+|x|}\, dx<\infty,
    \end{equation}
    and let $u_{f,D}(y_1,y_2) \coloneqq (P_{y_2}\ast f)(y_1)$. 
    Then, we have that 
    \begin{equation}\label{eq:gradientPoisson}
    \nabla u_{f,D}(y_1,y_2)
    =
    \left(-P_{y_2}\ast f'(y_1), Q_{y_2}\ast f'(y_1)\right), \quad\forall (y_1,y_2)\in \R_{+}^{2}.        
    \end{equation}
\end{lemma}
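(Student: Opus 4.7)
The plan is to differentiate $u_{f,D}(y_1,y_2) = \int_\R P_{y_2}(y_1-t)f(t)\,dt$ under the integral sign and then transfer the derivative onto $f$ via integration by parts in the variable $t$. I would treat the two components of the gradient separately, using the chain-rule identity $\partial_{y_1}P_{y_2}(y_1-t) = -\partial_t P_{y_2}(y_1-t)$ for the horizontal component, and the Cauchy--Riemann-type identity $\partial_{y_2}P_{y_2}(y_1-t) = \partial_t Q_{y_2}(y_1-t)$ (which reflects the fact that $(P+iQ)_{y_2}(y_1-t) = \tfrac{i}{\pi(z-t)}$ is holomorphic in $z=y_1+iy_2$) for the vertical component.

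The interchange of derivative and integral is justified by the first condition in \eqref{eq:integrabilityWeakDerivative}: for fixed $y_2>0$ and $y_1$ in any compact set, both $\partial_{y_1}P_{y_2}(y_1-t)$ and $\partial_{y_2}P_{y_2}(y_1-t)$ are dominated by $C_{y_1,y_2}/(1+t^2)$. After substituting the two kernel identities and carrying out integration by parts on a truncated interval $[-R,R]$ (legitimate because $f$, having a locally integrable weak derivative, is absolutely continuous on compacts and satisfies the classical fundamental theorem of calculus), one produces the desired convolutions of $f'$ with $P_{y_2}$ and $Q_{y_2}$, modulo boundary contributions of the form $P_{y_2}(y_1\mp R)f(\pm R)$ and $Q_{y_2}(y_1\mp R)f(\pm R)$.

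The main obstacle I expect is controlling these boundary terms. The second condition in \eqref{eq:integrabilityWeakDerivative} only yields the linear bound $|f(t)|\le |f(0)|+(1+|t|)\,\|f'\|_{L^1(dx/(1+|x|))}$, and since $Q_{y_2}(y_1-R)$ decays only like $1/R$, the crude estimate for the $Q$-boundary term is merely $O(1)$. I would circumvent this in one of two ways: either by extracting a sequence $R_n\to\infty$ along which $|f(R_n)|/R_n\to 0$, whose existence follows from $\int|f'(s)|/(1+|s|)\,ds<\infty$ via a Fatou-type averaging argument; or, more cleanly, by a density argument, verifying the identity first for $f\in C_c^\infty(\R)$ (where the boundary terms vanish trivially) and then extending by continuity, since both sides of the identity depend linearly and continuously on $f$ in the norm induced by the two integrability conditions.

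An alternative and more compact route is to bundle the computation into the holomorphic function $F(z):=(P_{y_2}\ast f)(y_1)+i(Q_{y_2}\ast f)(y_1)$, $z=y_1+iy_2$: a single differentiation under the integral followed by one integration by parts in $t$ gives $F'(z) = (P_{y_2}\ast f')(y_1)+i(Q_{y_2}\ast f')(y_1)$, and the gradient of $u_{f,D}$ can then be read off via the Cauchy--Riemann relation $F'(z) = \partial_{y_1}u_{f,D} - i\,\partial_{y_2}u_{f,D}$, reducing the whole problem to handling a single complex integration by parts rather than two real ones.
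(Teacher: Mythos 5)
Your argument is exactly what the paper has in mind: its entire proof of this lemma is the single sentence ``The proof follows by standard distributional computations,'' and your differentiation under the integral sign followed by integration by parts in $t$, using the kernel identities $\partial_{y_1}P_{y_2}(y_1-t)=-\partial_t P_{y_2}(y_1-t)$ and $\partial_{y_2}P_{y_2}(y_1-t)=\partial_t Q_{y_2}(y_1-t)$, is that computation made explicit. The domination by $C_{y_1,y_2}(1+t^2)^{-1}$ is correct, and your handling of the boundary terms works; in fact no subsequence extraction is needed, since splitting $\int_0^R|f'|$ at a large fixed $R_0$ shows that $|f(\pm R)|/(1+R)\to 0$ in the full limit $R\to\infty$, so both $P_{y_2}(y_1\mp R)f(\pm R)$ and $Q_{y_2}(y_1\mp R)f(\pm R)$ tend to $0$ outright.

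Two caveats. First, carrying your own identities through carefully yields
\[
\nabla u_{f,D}(y_1,y_2)=\bigl(P_{y_2}\ast f'(y_1),\,-Q_{y_2}\ast f'(y_1)\bigr),
\]
i.e.\ the \emph{negative} of the displayed formula \eqref{eq:gradientPoisson}: compare with the elementary identity $\partial_x(g\ast f)=g\ast f'$, or test against $f=\arctan$, for which $\partial_{y_1}u>0$ while $-P_{y_2}\ast f'<0$. This is an overall sign slip in the statement itself --- harmless for the paper, since Corollary \ref{corol:relacionNeumannRegularidad} only uses $|\nabla u_{f,D}|=|\nabla u_{f',N}|$ --- but you should not expect your computation to land on the stated signs. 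Second, your ``more compact route'' writes down $Q_{y_2}\ast f$, which the hypotheses do not make convergent: \eqref{eq:integrabilityWeakDerivative} gives $\int|f|(1+x^2)^{-1}\,dx<\infty$ but not $\int|f|(1+|x|)^{-1}\,dx<\infty$ (e.g.\ $f(t)=t\,(\log(2+|t|))^{-2}$ satisfies both conditions in \eqref{eq:integrabilityWeakDerivative} yet $Q_{y_2}\ast f$ diverges). That shortcut would need the modified conjugate kernel $\frac{1}{\pi}\bigl(\frac{y_1-t}{(y_1-t)^2+y_2^2}+\frac{t}{1+t^2}\bigr)$; your primary route, which only ever convolves $Q_{y_2}$ with $f'$, avoids the problem entirely.
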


\begin{proof} The proof follows by standard distributional computations. 
\end{proof} 

As a direct consequence of \eqref{eq:gradientNeumannIntegral} and \eqref{eq:gradientPoisson}, we obtain the following relation between the gradient of the Poisson integral and that of the Neumann integral. 

\begin{corollary}\label{corol:relacionNeumannRegularidad}
    Let $f$ be as in Lemma \ref{lemma:calculoGradienteRegularidad}. Then
    \begin{equation}\label{eq:relacionGradientesNeumannRegularidad}
        |\nabla u_{f,D}(y_1,y_2)|
        =
        |\nabla u_{f',N}(y_1,y_2)|,\quad \forall (y_1,y_2)\in\R_{+}^{2}.
    \end{equation}
\end{corollary}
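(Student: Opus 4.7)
The plan is essentially a direct component-wise comparison, using the two gradient formulas that have already been established in the paper.

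First, I would invoke Lemma~\ref{lemma:calculoGradienteRegularidad} (formula \eqref{eq:gradientPoisson}) to write the gradient of the Poisson extension as
\[
\nabla u_{f,D}(y_1,y_2) = \bigl(-P_{y_2}\ast f'(y_1),\; Q_{y_2}\ast f'(y_1)\bigr).
\]
This is legitimate because the hypotheses on $f$ in Lemma~\ref{lemma:calculoGradienteRegularidad} include exactly the integrability conditions needed for both the Poisson integral $u_{f,D}$ and the conjugate Poisson integral of $f'$ to be well-defined.

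Next, I would apply the formula \eqref{eq:gradientNeumannIntegral} with $f$ replaced by $f'$ to obtain
\[
\nabla u_{f',N}(y_1,y_2) = \bigl(-Q_{y_2}\ast f'(y_1),\; -P_{y_2}\ast f'(y_1)\bigr).
\]
Here I would briefly note that this is valid because the second hypothesis in \eqref{eq:integrabilityWeakDerivative}, namely $\int_{\R}\frac{|f'(x)|}{1+|x|}\,dx<\infty$, is precisely what Definition~\ref{lemma:gradientDataH1} requires for the Neumann integral of $f'$ to be well-defined, and the pointwise gradient identity \eqref{eq:gradientNeumannIntegral} then applies.

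Finally, I would simply compare the two vectors. Although their components appear in different orders and with different signs, the set of magnitudes of the components coincides: in both cases the two components are $\pm P_{y_2}\ast f'(y_1)$ and $\pm Q_{y_2}\ast f'(y_1)$. Therefore
\[
|\nabla u_{f,D}(y_1,y_2)|^2 = |P_{y_2}\ast f'(y_1)|^2 + |Q_{y_2}\ast f'(y_1)|^2 = |\nabla u_{f',N}(y_1,y_2)|^2,
\]
which gives the desired equality pointwise on $\R_{+}^{2}$. There is no real obstacle here beyond checking that the hypotheses legitimately allow invoking both formulas; the identity itself is immediate from the Cauchy--Riemann-type symmetry between the Poisson and conjugate Poisson kernels.
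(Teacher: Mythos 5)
Your proposal is correct and follows exactly the paper's route: the paper also obtains \eqref{eq:relacionGradientesNeumannRegularidad} as a direct consequence of comparing the component formulas \eqref{eq:gradientPoisson} and \eqref{eq:gradientNeumannIntegral} (the latter applied to $f'$), whose entries are $\pm P_{y_2}\ast f'$ and $\pm Q_{y_2}\ast f'$ up to order and sign. Your additional remark that the second condition in \eqref{eq:integrabilityWeakDerivative} is what makes the Neumann integral of $f'$ well defined is a correct, if routine, verification that the paper leaves implicit.
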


Using \eqref{eq:relacionGradientesNeumannRegularidad}, the estimates for the Neumann problem can be transferred to the Regularity problem. 

\begin{lemma}\label{lemma:solvabilityRegularityProblemHalfPlane}
    Consider one of the following pairs $(X,Y)$ of spaces:
    \begin{itemize}
        \item $X=L^p(w)$, $p\in (1,\infty)$, $w\in A_p(\R)$, with $Y=L^p(w)$.
        \item $X=\mathcal{L}^{p,1}(u,v)$, $(u,v)\in \mathcal{S}_p^{\mathcal{R}}$ and $\|(uv^{-1})(1+|\cdot|)^{-1}\|_{L^{p',\infty}(v)}<\infty$,
        with $Y=\mathcal{L}^{p,\infty}(u,v)$. 
        \item $X=H_{at}^{1}(\R,w)$, $w\in A_1(\R)$, with $Y=L^1(\R,w)$.
    \end{itemize}
    If $f$ is as in Lemma \ref{lemma:calculoGradienteRegularidad}, then
    \[
    \|\mathcal{M}_{\alpha}(\nabla u_{f,D})\|_{Y}
    \lesssim_{\alpha}
    \|f'\|_{X}.
    \]
\end{lemma}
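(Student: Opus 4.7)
The plan is to reduce the Regularity problem in $\R_{+}^{2}$ to the already-established Neumann problem by exploiting the pointwise identity \eqref{eq:relacionGradientesNeumannRegularidad} from Corollary \ref{corol:relacionNeumannRegularidad}. Once we know
\[
|\nabla u_{f,D}(y_1,y_2)| = |\nabla u_{f',N}(y_1,y_2)|\quad\text{for every }(y_1,y_2)\in\R_{+}^{2},
\]
taking the supremum over any non-tangential cone at a boundary point $x\in\R$ gives
\[
\mathcal{M}_{\alpha}(\nabla u_{f,D})(x) = \mathcal{M}_{\alpha}(\nabla u_{f',N})(x),
\]
so the target estimate $\|\mathcal{M}_{\alpha}(\nabla u_{f,D})\|_Y \lesssim \|f'\|_X$ becomes the corresponding non-tangential maximal estimate for the Neumann integral with datum $f'\in X$. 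In each of the three scenarios this reduces to a result proved earlier in the paper.

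For the first case, $X=L^p(w)$ with $w\in A_p(\R)$ and $Y=L^p(w)$, I would invoke Theorem \ref{thm:thm1.3_CNO} applied to the datum $f'\in L^p(w)$; the required integrability $\int_{\R}|f'(x)|/(1+|x|)\,dx<\infty$ that guarantees $u_{f',N}$ is well defined follows from Lemma \ref{lemma:estimateNeumannIntegral}, so the second condition in \eqref{eq:integrabilityWeakDerivative} is automatic. For the third case, $X=H_{at}^{1}(\R,w)$ with $w\in A_1$ and $Y=L^1(w)$, I would simply apply Theorem \ref{thm:H1SolvabilityHalfPlane} to $f'\in H_{at}^{1}(\R,w)$, which gives $\|\mathcal{M}_{\alpha}(\nabla u_{f',N})\|_{L^1(w)}\lesssim \|f'\|_{H_{at}^{1}(w)}$ directly.

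The genuinely new estimate is the two-weight Lorentz case $X=\mathcal{L}^{p,1}(u,v)$, $Y=\mathcal{L}^{p,\infty}(u,v)$ with $(u,v)\in\mathcal{S}_p^{\mathcal{R}}$. Here I would combine the sparse domination of Theorem \ref{thm:sparseDominationNonTangential}, namely $\mathcal{M}_{\alpha}(\nabla u_{f',N})\lesssim \mathcal{A}_{\mathcal{S}}(f')$ for some sparse family $\mathcal{S}$, with the definition of $\mathcal{S}_p^{\mathcal{R}}$. Setting $h\coloneqq f'/u$, one has $f'=hu$ and
\[
\left\|\mathcal{M}_{\alpha}(\nabla u_{f',N})\right\|_{\mathcal{L}^{p,\infty}(u,v)}
=
\left\|\frac{\mathcal{M}_{\alpha}(\nabla u_{f',N})}{u}\right\|_{L^{p,\infty}(v)}
\lesssim
\left\|\frac{\mathcal{A}_{\mathcal{S}}(hu)}{u}\right\|_{L^{p,\infty}(v)}
\lesssim \|h\|_{L^{p,1}(v)}
=
\|f'\|_{\mathcal{L}^{p,1}(u,v)},
\]
the last inequality being exactly \eqref{eq:twoWeightBound}. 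The well-definedness of $u_{f',N}$ is the one subtle point here: by a Hölder argument analogous to \eqref{eq:wellDefinedEndpointHolder},
\[
\int_{\R}\frac{|f'(x)|}{1+|x|}\,dx
\lesssim
\|f'\|_{\mathcal{L}^{p,1}(u,v)}\left\|\frac{uv^{-1}}{1+|\cdot|}\right\|_{L^{p',\infty}(v)},
\]
and the second factor is finite precisely because of the assumption $\|(uv^{-1})(1+|\cdot|)^{-1}\|_{L^{p',\infty}(v)}<\infty$ imposed in the statement.

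Overall, the argument is short once Corollary \ref{corol:relacionNeumannRegularidad} is in place. I expect the only nuisance to be bookkeeping the integrability hypothesis \eqref{eq:integrabilityWeakDerivative} across the three cases so that the Neumann integral $u_{f',N}$ is legitimate before any estimate is applied; the maximal bounds themselves follow immediately from the Neumann theory of Sections \ref{sec:H1solvability} through the proof of Theorem \ref{thm:Lp1_Solvability}, together with Theorem \ref{thm:thm1.3_CNO}.
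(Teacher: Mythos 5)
Your proposal is correct and follows essentially the same route as the paper: reduce to the Neumann integral via the pointwise identity $|\nabla u_{f,D}|=|\nabla u_{f',N}|$ of Corollary \ref{corol:relacionNeumannRegularidad}, then invoke the $L^p(w)$, sparse/$\mathcal{S}_p^{\mathcal{R}}$, and $H^1_{at}(\R,w)$ Neumann maximal estimates respectively, checking the integrability needed for $u_{f',N}$ to be well defined in each case. The only cosmetic difference is that you spell out the sparse-domination step in the Lorentz case, which the paper compresses into ``by definition.''
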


\begin{proof}
For the first case, we can apply formula (3.16) in page 36 of \cite{CNO} to $u_{f',N}$ and obtain
\begin{equation}\label{eq:maximalEstimateXY}
\|\mathcal{M}_{\alpha}(\nabla u_{f',N})\|_{Y}
\lesssim 
\|f'\|_{X}.    
\end{equation}
Since $f'\in X$, $f'$ satisfies \eqref{eq:integrabilityWeakDerivative} by Lemma 3.1 in \cite{CNO}, so we can apply Corollary \ref{corol:relacionNeumannRegularidad} and obtain:
\[
\|\mathcal{M}_{\alpha}(\nabla u_{f,D})\|_{Y}
=
\|\mathcal{M}_{\alpha}(\nabla u_{f',N})\|_{Y}
\lesssim 
\|f'\|_{X}.
\]

For the $\mathcal{L}^{p,1}(u,v)$-case, estimate \eqref{eq:maximalEstimateXY} is obtained by definition, and estimate \eqref{eq:integrabilityWeakDerivative} follows from the hypothesis $\|(uv^{-1})(1+|\cdot|)^{-1}\|_{L^{p',\infty}(v)}<\infty$ using \eqref{eq:wellDefinedEndpointHolder}.

For the remaining case $X=H_{at}^{1}(\R,w)$ with $w\in A_1$, we use Theorem \ref{thm:H1SolvabilityHalfPlane} in an analogous way. \qedhere
\end{proof}

These results address the Regularity problem in $\R_{+}^2$. To pass to $\Omega$, some care must be taken to deal with the derivatives of the boundary datum. First, we need some technical remarks.

\subsection{Some technical facts about conformal mapping}

Since $\Phi(x)\in\Lambda$, it must be of the form 
\[
\Phi(x)=(\Phi_1(x),\gamma(\Phi_1(x))),\quad x\in\R
\]
for some $\Phi_1\colon \R\to\R$. Hence, by the Chain Rule, 
\[
\Phi'(x)=\Phi_1'(x)\cdot (1,\gamma'(\Phi_1(x))).
\]
Since $\Phi'(x)\neq 0$ $dx$-a.e., it must be $\Phi_1'(x)\neq 0$ $dx$-a.e. in $\R$. Since $|\operatorname{arg}\Phi'(x)|\leq\arctan{L}<\frac{\pi}{2}$, then $\operatorname{Re}(\Phi'(x))=\Phi_1'(x)>0$ at every point where $\Phi'(x)$ exists and is different from $0$. In particular, 
\begin{equation}\label{eq:positiveDerivativePhi1}
\Phi_1'(x)>0 \text{ for a.e. } x\in\R. 
\end{equation}
Since it is a bijection from $\R\to\R$, it is strictly  increasing.
 
Besides, by the Inverse Function Theorem on $\R$, we have that $\Phi_1^{-1}$ has derivative $dx$-a.e., with
\begin{equation}\label{eq:derivativeInversePhi1}
(\Phi_1^{-1})'(x)
=
\frac{1}{\Phi_1'(\Phi_1^{-1}(x))}
>
0,\quad dx\text{-a.e.}    
\end{equation}
In summary, we have:

\begin{proposition}\label{prop:observacionPhi1Prima}
    If $\Phi\colon \R\to\Lambda=\partial\Omega$ is given by the components $\Phi=(\Phi_1,\Phi_2)$, then $\Phi_1\colon \R\to\R$ is an absolutely continuous homeomorphism from $\R\to\R$, differentiable a.e. with positive derivative. Its inverse $\Phi_{1}^{-1}$ is differentiable a.e. with derivative given by \eqref{eq:derivativeInversePhi1}.
\end{proposition}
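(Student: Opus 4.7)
The plan is to extract the claimed properties of $\Phi_1$ directly from Theorem \ref{thm:thm1.1_Kenig} on the global properties of $\Phi$, using only minor additional observations beyond those already made in the paragraphs that precede the proposition.

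First, I would establish that $\Phi_1\colon \R\to\R$ is a homeomorphism. By Theorem \ref{thm:thm1.1_Kenig}(1), $\Phi$ extends to a homeomorphism $\overline{\R_{+}^{2}}\to\overline{\Omega}$ with $\Phi(\infty)=\infty$, so the restriction $\Phi\colon\R\to\Lambda$ is a homeomorphism. Since $\Lambda$ is a Lipschitz graph, the projection $\pi_1\colon\Lambda\to\R$, $\pi_1(t,\gamma(t))=t$, is itself a homeomorphism (indeed $\pi_1^{-1}=\eta$). Composing, $\Phi_1=\pi_1\circ\Phi\colon\R\to\R$ is a homeomorphism. Absolute continuity of $\Phi_1$ on any finite interval is immediate from Theorem \ref{thm:thm1.1_Kenig}(3) applied to $\Phi$, since $\Phi_1=\operatorname{Re}\Phi$.

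Next, a.e.\ differentiability with strictly positive derivative has already been argued in the paragraph preceding the statement, using Theorem \ref{thm:thm1.1_Kenig}(2)--(3) (which gives $\Phi'(x)\neq 0$ a.e.) together with the chain-rule decomposition $\Phi'(x)=\Phi_1'(x)(1,\gamma'(\Phi_1(x)))$ and the tangency bound $|\arg\Phi'(x)|\leq\arctan L<\pi/2$ from Theorem \ref{thm:thm1.1_Kenig}(5). The latter forces $\operatorname{Re}\Phi'(x)=\Phi_1'(x)>0$ wherever $\Phi'(x)$ exists and is nonzero. Strict monotonicity (and in fact the fact that $\Phi_1$ is increasing rather than decreasing) is then a direct consequence of being a continuous bijection $\R\to\R$ combined with the positivity of $\Phi_1'$ a.e.

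For the formula \eqref{eq:derivativeInversePhi1} for $(\Phi_1^{-1})'$, I would use the one-dimensional inverse-function lemma for absolutely continuous strictly monotone functions: $\Phi_1^{-1}$ is continuous and monotone, hence differentiable a.e.\ by Lebesgue's theorem on differentiation of monotone functions. At any point $x$ where $\Phi_1^{-1}$ is differentiable and $\Phi_1$ is differentiable at $\Phi_1^{-1}(x)$ with positive derivative, differentiating the identity $\Phi_1(\Phi_1^{-1}(x))=x$ (via the difference quotient) yields $(\Phi_1^{-1})'(x)=1/\Phi_1'(\Phi_1^{-1}(x))$; the exceptional set has Lebesgue measure zero.

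I do not foresee any substantial obstacle, since the statement is essentially a summary of facts already derived from Theorem \ref{thm:thm1.1_Kenig} and the chain-rule observation on $\Phi'$. The only point requiring a little care is the a.e.\ differentiability of $\Phi_1^{-1}$ on its full domain, but this is handled by invoking Lebesgue's theorem rather than attempting to transfer differentiability pointwise through $\Phi_1$.
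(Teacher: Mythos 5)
Your proposal is correct and follows essentially the same route as the paper, whose ``proof'' is precisely the preceding discussion: the graph representation $\Phi(x)=(\Phi_1(x),\gamma(\Phi_1(x)))$, the chain rule giving $\Phi'=\Phi_1'\cdot(1+i\gamma'(\Phi_1))$, the tangency bound $|\arg\Phi'|\le\arctan L<\pi/2$ forcing $\Phi_1'=\operatorname{Re}\Phi'>0$ a.e., and the one-dimensional inverse function theorem for \eqref{eq:derivativeInversePhi1}. The only point you (like the paper) pass over quickly is why the exceptional set $\Phi_1(N)$, with $N$ the null set where $\Phi_1$ fails to be differentiable with positive derivative, is itself null --- this uses the Luzin $N$ property of the locally absolutely continuous $\Phi_1$, which you have already established, so the gap is cosmetic.
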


\subsection{Derivative of a function defined over a Lipschitz curve}

To solve the Regularity problem in $\Omega$, we need to deal with estimate \eqref{eq:regularityEstimate}, which involves 
the weak derivative $f'$ of the Dirichlet datum $f$ appears. This datum $f$ is a function defined on the boundary, $f\colon \Lambda\to \R$, so we must clarify what we understand as the weak derivative of a function defined on a Lipschitz curve. This is defined in some articles like \cite{VerchotaLayer}, but here we modify slightly the class of test functions in order to make rigorous some explicit computations that we have to perform. First, we introduce some notation.

\begin{definition}
Let $g\colon \Lambda\to \R$ be a measurable function. We say that $g$ is locally integrable, and write $g\in L_{loc}^{1}(\Lambda)$ if $(g\circ\eta)(t)=g(t,\gamma(t))\in L_{loc}^{1}(\R,dt)$.
\end{definition}

We are going to use functions of bounded variation as test functions, for reasons explained below. 
\begin{definition}
    Let $F\colon \R\to\C$ and $x\in\R$. Define
    \[
    T_F(x)
    =
    \sup \left\{\sum_{j=1}^n\left|F\left(x_j\right)-F\left(x_{j-1}\right)\right|: n \in \mathbb{N},-\infty<x_0<\cdots<x_n=x\right\}.
    \]
    $T_F$ is called the total variation function of $F$.
    It is an increasing function with values in $[0, \infty]$. If $T_F(\infty)=\lim _{x \rightarrow \infty} T_F(x)$ is finite, we say that $F$ is of bounded variation on $\mathbb{R}$, and we denote the space of all such $F$ by $BV$ (\cite{follandRealAnalysis}). We denote by $BV_{c}$ the set of functions in $BV$ with compact support.
\end{definition}

\begin{definition}\label{def:definicionDefinitivaDerivadaDebilFrontera}
     Let $\Lambda$ and $\gamma$ be as above. Given $g\in L_{loc}^1(\Lambda)$, we say that $g$ has weak derivative $h=g'$ if for every $\varphi\colon \R\to\R$ in $BV_c(\R)$ we have
    \begin{equation}\label{eq:derivadaDebilCLNstyle}
    \int_{\R}g(t,\gamma(t))\varphi'(t)\, dt
    =
    -\int_{\R}g'(t,\gamma(t))(1+i\gamma'(t))\varphi(t)\, dt.    
    \end{equation}
\end{definition}

\begin{remark}
The reason to choose as test function the class $BV_{c}$ 
is so that $\varphi\circ\Phi_1^{-1}$ is a test function when $\varphi\in C_c^{\infty}(\R)$.
\end{remark}

\begin{lemma}\label{lemma:derivadaDatoDirichlet}
    Let $g\colon \Lambda=\partial\Omega\to \R$. Assume that $g$ has weak derivative $g'$ in the sense of Definition \ref{def:definicionDefinitivaDerivadaDebilFrontera}. Then $(g\circ\Phi)$ has weak derivative $(g\circ\Phi)'$  and is given by
    \begin{equation}\label{eq:expresionDerivada}
    (g\circ\Phi)'
    =
    (g'\circ\Phi)\cdot \Phi'\quad \text{ a.e. in }\R.    
    \end{equation}
\end{lemma}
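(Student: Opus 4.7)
The plan is to verify the weak derivative identity by testing $(g\circ\Phi)$ against an arbitrary $\psi\in C_c^{\infty}(\R)$, reducing via change of variables $t=\Phi_1(x)$ to the weak derivative identity for $g$ provided by Definition \ref{def:definicionDefinitivaDerivadaDebilFrontera}. Concretely, I want to show that
\[
\int_{\R}(g\circ\Phi)(x)\,\psi'(x)\,dx
=
-\int_{\R}(g'\circ\Phi)(x)\,\Phi'(x)\,\psi(x)\,dx,
\quad \psi\in C_c^{\infty}(\R),
\]
which, combined with the pointwise identity $\Phi'(x)=\Phi_1'(x)(1+i\gamma'(\Phi_1(x)))$ following from $\Phi(x)=(\Phi_1(x),\gamma(\Phi_1(x)))$, will yield \eqref{eq:expresionDerivada}.

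The first step is to perform the substitution $t=\Phi_1(x)$. By Proposition \ref{prop:observacionPhi1Prima}, $\Phi_1$ is an absolutely continuous, strictly increasing homeomorphism of $\R$, differentiable a.e.\ with $\Phi_1'(x)>0$ a.e., and $\Phi_1^{-1}$ is absolutely continuous with $(\Phi_1^{-1})'(t)=1/\Phi_1'(\Phi_1^{-1}(t))$ a.e. The absolute continuity of $\Phi_1$ validates the change of variables formula
\[
\int_{\R}(g\circ\Phi)(x)\psi'(x)\,dx
=
\int_{\R}g(t,\gamma(t))\,\psi'(\Phi_1^{-1}(t))(\Phi_1^{-1})'(t)\,dt
=
\int_{\R}g(t,\gamma(t))(\psi\circ\Phi_1^{-1})'(t)\,dt,
\]
where the last equality uses the chain rule for absolutely continuous compositions.

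The second step, and what I expect to be the main technical point, is to justify that $\varphi\coloneqq \psi\circ\Phi_1^{-1}$ is an admissible test function, i.e., $\varphi\in BV_c(\R)$. This is precisely the reason the Definition \ref{def:definicionDefinitivaDerivadaDebilFrontera} was set up with $BV_c$ test functions rather than $C_c^{\infty}$: although $\psi\circ\Phi_1^{-1}$ need not be smooth (as $\Phi_1^{-1}$ is only absolutely continuous), it is continuous and compactly supported since $\Phi_1$ is a homeomorphism sending $\supp\psi$ to a compact interval, and it is BV because the composition of a BV function with a monotone function is again BV with the same total variation. Hence $\varphi\in BV_c(\R)$ and Definition \ref{def:definicionDefinitivaDerivadaDebilFrontera} applies:
\[
\int_{\R}g(t,\gamma(t))\varphi'(t)\,dt
=
-\int_{\R}g'(t,\gamma(t))(1+i\gamma'(t))\varphi(t)\,dt.
\]

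The third step is to change variables back via $t=\Phi_1(x)$, $dt=\Phi_1'(x)\,dx$, obtaining
\[
-\int_{\R}g'(\Phi_1(x),\gamma(\Phi_1(x)))(1+i\gamma'(\Phi_1(x)))\,\psi(x)\,\Phi_1'(x)\,dx
=
-\int_{\R}(g'\circ\Phi)(x)\,\Phi'(x)\,\psi(x)\,dx,
\]
using the identification $\Phi'(x)=\Phi_1'(x)(1+i\gamma'(\Phi_1(x)))$. Chaining the three identities yields the claim. The only delicate point beyond the BV composition argument is verifying the change-of-variables formula for absolutely continuous monotone maps with possibly vanishing derivative on a null set, which is standard (e.g.\ from the Banach--Zarecki theorem together with absolute continuity of $\Phi_1^{-1}$, which holds since $(\Phi_1^{-1})'$ is locally integrable by \eqref{eq:derivativeInversePhi1} and $\Phi_1^{-1}$ is monotone).
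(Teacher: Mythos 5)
Your proof follows essentially the same route as the paper's: change variables via $\Phi_1$, observe that $\psi\circ\Phi_1^{-1}\in BV_c(\R)$ so that Definition \ref{def:definicionDefinitivaDerivadaDebilFrontera} applies with this test function, and change variables back using $\Phi'(t)=\Phi_1'(t)(1+i\gamma'(\Phi_1(t)))$. Your justification that $\psi\circ\Phi_1^{-1}$ is of bounded variation (a BV function composed with a monotone map is BV, with variation at most that of $\psi$) is if anything cleaner than the paper's, which instead bounds the variation by $\|\varphi'\|_\infty$ times the total variation of $\Phi_1^{-1}$ on a compact interval. One caveat: your parenthetical claim that $\Phi_1^{-1}$ is absolutely continuous \emph{because} it is monotone with locally integrable a.e.\ derivative is not a valid implication (the Cantor function is monotone with integrable derivative yet not AC); the absolute continuity of $\Phi_1^{-1}$ should instead be derived from Theorem \ref{thm:thm1.1_Kenig}(4) --- the ``vice versa'' gives the Luzin N property for $\Phi^{-1}$ --- combined with the Banach--Zarecki theorem, a point the paper's own proof also passes over silently.
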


\begin{proof}
We want to see that if
\begin{equation}\label{eq:derivDebil1}
\int_{\R}g(t,\gamma(t))\varphi'(t)\, dt
=
-\int_{\R}g'(t,\gamma(t))(1+i\gamma'(t))\varphi(t)\, dt,\quad \forall\varphi\in \operatorname{BV}_c(\R),
\end{equation}
then
\begin{equation}\label{eq:derivDebil2}
-\int_{\R}(g\circ\Phi)(t)\varphi'(t)\, dt
=
\int_{\R}(g'\circ\Phi)(t)(\Phi')(t)\varphi(t)\, dt,\quad \forall \varphi\in C_c^{\infty}(\R).    
\end{equation}
Since $\Phi=\eta\circ\Phi_1$, we have
\begin{align*}
    -\int_{\R}(g\circ\Phi)(t)\varphi'(t)\, dt
    = 
    -\int_{\R}(g\circ\eta)(\Phi_1(t))\varphi'(t)\, dt
    =
    -\int_{\R}(g\circ\eta)(\Phi_1(t))\varphi'(\Phi_1^{-1}\circ\Phi_1(t))\, dt,
\end{align*}
Making the change variables $y=\Phi_1(t)$ and applying the chain rule, the above integral becomes
\[
-\int_{\R}(g\circ\Phi)(t)\varphi'(t)\, dt
=
-\int_{\R}(g\circ\eta)(y)(\varphi\circ\Phi_1^{-1})'(y)\, dy.
\]

We would like to apply to this integral the relation \eqref{eq:derivDebil1} with test function $\varphi\circ\Phi_1^{-1}$. We have to check that it is of bounded variation with compact support. The compact support is clear. Besides, since $\Phi_1^{-1}$ is strictly increasing and locally bounded, we have that  $\Phi_1^{-1}$ is of bounded variation in $[\Phi_1(a)-1,\Phi_1(b)+1]$. So, given
$x_1<x_2<\ldots\leq \Phi_1(a)-1<x_k<\ldots\leq \Phi_1(b)+1<x_{k'+1}<\ldots<x_n$, $n\in\N$, we have that 
\[
\sum_{1}^{n}|\varphi(\Phi_1^{-1}(x_j))-\varphi(\Phi_1^{-1}(x_{j-1}))|\le 
\|\varphi'\|_{\infty}(T_{\Phi_1^{-1}}(\Phi_1(b)+1)-T_{\Phi_1^{-1}}(\Phi_1(a)+1))
<
\infty, 
\]
and thus, $\varphi\circ\Phi_1^{-1}$ is of bounded variation in $\R$. Therefore, applying \eqref{eq:derivDebil1} using as test function $\varphi\circ\Phi_1^{-1}$, we obtain
\[
    -\int_{\R}(g\circ\eta)(y)(\varphi\circ\Phi_1^{-1})'(y)\, dy
    =
    \int_{\R}g'(\eta(y))(1+i\gamma'(y))(\varphi\circ\Phi_1^{-1})(y)\, dy
    =
    \int_{\R}g'(\Phi(t))\Phi'(t)\varphi(t)\, dt,
\]
having made the change $y=\Phi_1(t)$ and using that $\Phi_1'(t)(1+i\gamma'(\Phi_1(t)))=\Phi'(t)$ and $\eta\circ\Phi_1=\Phi$. \qedhere
\end{proof}

\subsection{The regularity problem}

Lemma \ref{lemma:derivadaDatoDirichlet}, together with Corollary \ref{corol:relacionNeumannRegularidad}, is what allows us to adapt all the work done for the Neumann problem to the Regularity problem. Indeed, if $g$ is the Dirichlet datum on $\Lambda$, $\mathcal{T}_{D}(g)\coloneqq g\circ\Phi$ is the Dirichlet datum on $\R$. By Lemma \ref{lemma:derivadaDatoDirichlet}, it has weak derivative $\mathcal{T}_{D}(g)'=(g'\circ\Phi)\Phi'$. On the other hand, if $f$ is the Neumann datum on $\Lambda$, then the corresponding Neumann datum on $\R$ is $\mathcal{T}_{N}(g)\coloneqq (g\circ\Phi)|\Phi'|$. Therefore,
\begin{equation}\label{eq:relationNeumannDirichletData}
|\mathcal{T}_{D}(g)'|
=
|\mathcal{T}_{N}(g')|.    
\end{equation}
In particular, their $L^p$ and $L^{p,1}$-norms will be equal. Hence, the same schemes of proof used for the Neumann problem work as well for the Regularity problem, so we just sketch the arguments.

\begin{proof}[Proof of Theorem \ref{thm:regularitySolvability}]
    We begin with the $L^p$-case. The proof is a repetition of Theorem \ref{thm:Lp_Solvability}. Since $f\in \widetilde{L}^1(\Lambda)$, then 
    \[
    u
    \coloneqq 
    u_{f\circ \Phi,D}
    =
    P_y\ast (f\circ\Phi)
    \]
    is a solution to the Dirichlet problem in $\R_{+}^{2}$ with datum $f\circ\Phi$. Besides, by Lemma \ref{lemma:solvabilityRegularityProblemHalfPlane}, since $|\Phi'|^{-p}\Phi(\nu)\in A_p(\R)$ we have 
    \[
    \|\mathcal{M}_{\alpha}(\nabla u_{f\circ\Phi,D})\|_{L^p(\R,|\Phi'|^{-p}\Phi(\nu))}\lesssim \|f\circ\Phi\|_{L^p(\R,|\Phi'|^{-p}\Phi(\nu))}.
    \]
    Then, if we define $v\coloneqq u_{f\circ\Phi,D}\circ\Phi^{-1}$, we have that $v$ is a solution to the Dirichlet problem in $\Omega$ and, reasoning with the norms of the non-tangential maximal operators as in the proof of Theorem \ref{thm:Lp_Solvability}, it satisfies $\|\mathcal{M}_{\alpha}(\nabla v)\|_{L^p(\Lambda,\nu)}\lesssim \|f\|_{L^{p}(\Lambda,\nu)}$. Uniqueness follows as in Theorem \ref{thm:Lp_Solvability}.

    \vspace{0.3cm}

    For the $L^{p,1}$-result, let $\bm{p}\in \{p_{-},p_{+}\}$ be as in the proof of Theorem \ref{thm:Lp1_Solvability}. Let $f\in \widetilde{L}^1(\Lambda)$ with weak derivative $f'\in L^{\bm{p},1}(\Lambda,\nu)$. To solve the $L^{\bm{p},1}(\Lambda,\nu)$-Regularity problem in $\Omega$ with datum $f$, consider in $\R_{+}^{2}$ the Poisson integral 
    \[
    u
    =
    u_{(f\circ\Phi),D}
    =
    P_{y}\ast (f\circ\Phi).
    \]
    This is a solution to the Dirichlet problem in $\R_{+}^{2}$ with datum $(f\circ\Phi)$ that satisfies the estimate:
    \begin{equation}\label{eq:ntEstimateLp1Semiplano}
        \left\|\frac{\mathcal{M}_{\alpha}(\nabla u)}{|\Phi'|}\right\|_{L^{\bm{p},\infty}(\R,\Phi(\nu))}
        \lesssim
        \|(f'\circ\Phi)\|_{L^{\bm{p},1}(\R,\Phi(\nu))}
        =
        \|(f'\circ\Phi)(\Phi')\|_{\mathcal{L}^{\bm{p},1}(\R,(|\Phi'|,\Phi(\nu)))},
    \end{equation}
    where, by Lemma \ref{lemma:calculoGradienteRegularidad}, 
    \begin{equation}\label{eq:gradientLp1InProof}
    \nabla u
    =
    (-P_y\ast (f\circ\Phi)',Q_y\ast (f\circ\Phi)').    
    \end{equation}
    
    The delicate step is to transfer estimate \eqref{eq:ntEstimateLp1Semiplano} to $\Omega$ to obtain
    \begin{equation}\label{eq:estimateLp1RegularityOmega}
    \|\mathcal{M}_{\alpha}(\nabla v)\|_{L^{\bm{p},\infty}(\Lambda,\nu)}
    \lesssim 
    \|f'\|_{L^{\bm{p},1}(\Lambda,\nu)},    
    \end{equation}
    where $v=u\circ\Phi^{-1}$. We have already done this in the proof of Theorem \ref{thm:Lp1_Solvability}. We indicate how to repeat the argument. 
    
    Since $(f\circ\Phi)'=(f'\circ\Phi)(\Phi')\in \mathcal{L}^{\bm{p},1}(\R,(|\Phi'|,\Phi(\nu)))$, then $f'\circ\Phi\in L^{\bm{p},1}(\R,\Phi(\nu))$. Hence, by Lemma 2.2 (b) in \cite{CNO}, there exists a sequence $\{g_m\}_{m\in\N}\subset C_c\cap L^{\bm{p},1}(\R,\Phi(\nu))$ such that 
    \[
    \|g_m-(f'\circ\Phi)\|_{L^{\bm{p},1}(\Lambda,\Phi(\nu))}
    \xrightarrow{m\to\infty}
    0.
    \]
    Then, defining $f_m\coloneqq g_m\cdot (\Phi')\in \mathcal{L}^{\bm{p},1}(\R,(|\Phi'|,\Phi(\nu)))$ for $m\in\N$, we have  
    \[
    \|
    (f\circ\Phi)'-f_m
    \|_{\mathcal{L}^{\bm{p},1}(\R,(|\Phi'|,\Phi(\nu)))}
    =
    \left\|
    \frac{(f'\circ\Phi)\cdot(\Phi')-g_m\cdot(\Phi')}{|\Phi'|}
    \right\|_{L^{\bm{p},1}(\R,\Phi(\nu))}
    \xrightarrow{m\to\infty}
    0.
    \]
    Notice that 
    \[
    F(x,y)
    \coloneqq 
    \partial_1u(x,y)-i\partial_2u(x,y)
    =
    -P_{y}\ast (f\circ\Phi)'(x)
    -iQ_{y}\ast (f\circ\Phi)'(x),\quad (x,y)\in\R_{+}^{2},
    \]
    is holomorphic in $\R_{+}^{2}$ with modulus equal to $|\nabla u|$. So if we define, for each $m\in\N$,
    \[
    F_m(x,y)
    \coloneqq
    -(P_y\ast f_m)(x)-i(Q_y\ast f_m)(x),\quad (x,y)\in\R_{+}^{2},
    \]
    these functions are holomorphic and converge uniformly to $F$ on compact subsets of $\R_{+}^{2}$. Indeed, given $K\subset \R_{+}^{2}$ compact and $z\in K$, we can bound $P_{y}(x-t),Q_{y}(x-t)\lesssim_{K}\frac{1}{1+|t|}$ in the convolution to obtain,
    \[
    |F_m(x,y)-F(x,y)|
    \lesssim_{K}
    \int_{\R}\frac{|f_m(t)-(f\circ\Phi)'(t)|}{1+|t|}\, dt
    \lesssim 
    \left\|\frac{f_m-(f\circ\Phi)'}{|\Phi'|}\right\|_{L^{\bm{p},1}(\Phi(\nu))}
    \left\|\dfrac{\left(\nu\circ\Phi\right)^{-1}}{1+|\cdot|}\right\|_{L^{\bm{p}',\infty}(\Phi(\nu))},
    \]
    which tends to $0$ when $m\to\infty$.
    
    Since we have uniform convergence over compact subsets of $\R_{+}^{2}$, we can repeat the limiting argument in the proof of Theorem 1.5 in \cite{CNO} to obtain \eqref{eq:estimateLp1RegularityOmega}. \qedhere

\end{proof}

\section{Examples for the cone with power weights}
\label{section:examplesDirichlet}

Let $\alpha\in (0,2)$ and consider the graph Lipschitz domain $\Omega$ given by a cone of angle $\alpha\pi$ (Figure \ref{fig:coneAngleAlpha}), with vertex at $0$ and symmetric with respect to the $y$-axis. For this case, the conformal mapping was carefully described in page 15 of \cite{rellich}, obtaining
\[
\Phi(z)
=
e^{i\frac{(1-\alpha)}{2}\pi}z^{\alpha}
=
ie^{-i\frac{\alpha}{2}\pi}e^{\alpha(\log{|z|}+i\operatorname{Arg}(z))},\quad z\in\R_{+}^{2},
\]
where the argument is defined with brach cut $\{iy:y\leq 0\}$, so that $\Phi\colon \R_{+}^{2}\to \Omega$ is holomorphic.

\begin{figure}
    \centering
\begin{tikzpicture}[scale=1]
    \fill[cyan, opacity=0.3] (6.4, 3.2) -- (9.6, 3.2) -- (8, 0.64) -- (6.4, 3.2) -- cycle;
    \fill[cyan, opacity=0.3] (0, 0.64) -- (0, 3.2) -- (3.84, 3.2) -- (3.84, 0.64) -- (0, 0.64) -- cycle;
    \draw[black, ->] (0, 0.64) -- (3.84, 0.64);
    \draw[black, ->] (1.92, 0) -- (1.92, 3.2);
    \draw[black, thick, ->] (4.16, 2.24) .. controls (5.12, 2.88) and (6.08, 2.88) .. (6.72, 2.24);
    \node[anchor=center, font=\normalsize] at (0.4, 2.8) {$\mathbb{R}_{+}^{2}$};
    \node[anchor=center, font=\normalsize] at (8.64, 2.88) {$\Omega$};
    \node[anchor=center, font=\normalsize] at (5.44, 2.94) {$\Phi$};
    \draw[black, ->] (6.4, 0.64) -- (9.6, 0.64);
    \draw[black, ->] (8, 0.32) -- (8, 3.52);
    \draw[black, thick] (6.4, 3.2) -- (8, 0.64) -- (9.6, 3.2);
    \draw[black, thick, ->] (7.676, 1.157) .. controls (7.89, 1.24) and (8.104, 1.237) .. (8.318, 1.149);
    \node[anchor=center, font=\normalsize] at (8, 1.5) {$\alpha\pi$};
\end{tikzpicture}

    \caption{A cone of angle $\alpha\pi$.}
    \label{fig:coneAngleAlpha}
\end{figure}
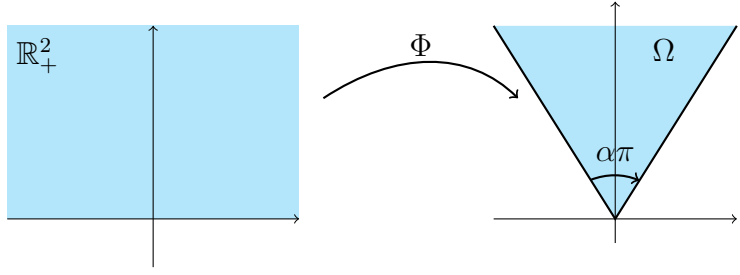

Passing to boundary values, we get
\[
\Phi(x)
=
e^{i\frac{(1-\alpha)}{2}\pi}x^{\alpha},
\quad x\in \R\equiv \partial\R_{+}^{2},
\]
and therefore
\[
|\Phi'(x)|\simeq_{\alpha} |x|^{\alpha-1},\quad x\in\R.
\]

Consider on $\Lambda$ the weight $\nu$  
\begin{equation}\label{eq:examplePowerWeightDirichlet}
\nu(\xi)
\coloneqq 
|\xi|^{\beta},\quad \xi\in\Lambda.    
\end{equation}
Then
\[
\Phi(\nu)(x)
=
\left(\nu\circ\Phi\right)(x)|\Phi'(x)|
=
|x|^{\alpha(\beta+1)-1},\quad x\in\R.
\]

Let us first recall the following result regarding power weights.

\begin{lemma}\label{lemma:powerWeights}
    Let $\beta\in\R$. Then:
    \begin{itemize}
        \item $|x|^{\beta}\in A_{\infty}(\R)$ if and only if $\beta>-1$.
        \item $|x|^{\beta}\in A_{p}(\R)$ if and only if $\beta\in (-1,p-1)$.
        \item $|x|^{\beta}\in A_{p}^{\mathcal{R}}(\R)$ if and only if $\beta\in (-1,p-1]$.
    \end{itemize}
\end{lemma}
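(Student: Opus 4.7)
My plan is to verify all three characterizations by direct computation on intervals, using the classical dual formulation of $A_p$ and the Kolmogorov-type characterization of $A_p^{\mathcal{R}}$ recalled in the preliminaries. The necessity of $\beta>-1$ in every case follows from the fact that $A_\infty$ weights are locally integrable. For $A_p$ one additionally uses that the dual weight $|x|^{-\beta/(p-1)}$ must be locally integrable, which forces $\beta<p-1$; for $A_p^{\mathcal{R}}$ the analogous requirement replaces local $L^{p'}$-integrability by local weak-$L^{p',\infty}$-integrability through the Kolmogorov formulation, which is the reason the endpoint $\beta=p-1$ is allowed there.

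For the $A_p$ sufficiency I would reduce the supremum in the $A_p$ condition to two model cases. On intervals $I\subset\R\setminus\{0\}$ with $|I|\lesssim \operatorname{dist}(I,0)$, the weight $|x|^\beta$ varies on $I$ by a bounded multiplicative factor, so the $A_p$ quotient is uniformly bounded in a trivial way. On symmetric intervals $I=(-r,r)$, explicit integration yields
\[
\Bigl(\tfrac{1}{|I|}\int_I|x|^\beta\,dx\Bigr)\Bigl(\tfrac{1}{|I|}\int_I|x|^{-\beta/(p-1)}\,dx\Bigr)^{p-1}
=\frac{1}{(\beta+1)\bigl(1-\tfrac{\beta}{p-1}\bigr)^{p-1}},
\]
which is finite precisely when $\beta\in(-1,p-1)$. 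A general interval can be compared to one of these two model cases up to a bounded constant. The $A_\infty$ statement then follows immediately: any $\beta>-1$ gives $|x|^\beta\in A_p$ for all $p>\beta+1$, while any $A_\infty$ weight is in some $A_p$ and hence locally integrable.

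For the $A_p^{\mathcal{R}}$ endpoint $\beta=p-1$ I would apply the characterization
\[
w\in A_p^{\mathcal{R}}(\R)\iff\sup_{Q\subset\R}\sup_{E\subset Q}\frac{|E|}{|Q|}\Bigl(\frac{w(Q)}{w(E)}\Bigr)^{1/p}<\infty
\]
from the preliminaries. Since $A_p\subset A_p^{\mathcal{R}}$, the range $\beta\in(-1,p-1)$ is already covered, so only the endpoint and the blow-up above it need to be examined. The extremal intervals $Q$ are again the symmetric ones $Q=(-r,r)$, and within them the extremal subsets $E$ depend on the sign of $\beta$: for $\beta\geq 0$ they concentrate near $0$, and taking $E=(-\varepsilon,\varepsilon)$ gives
\[
\frac{|E|}{|Q|}\Bigl(\frac{w(Q)}{w(E)}\Bigr)^{1/p}=\Bigl(\tfrac{\varepsilon}{r}\Bigr)^{1-(\beta+1)/p},
\]
which is uniformly bounded when $\beta\leq p-1$ and blows up as $\varepsilon\to 0^+$ when $\beta>p-1$; for $\beta\in(-1,0]$ the extremal subsets concentrate near the endpoints of $Q$, producing an analogous but uniformly bounded quotient.

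The main obstacle is the $A_p^{\mathcal{R}}$ argument: the Kolmogorov supremum requires identifying the extremal pair $(Q,E)$ by a case split on the sign of $\beta$, and one must check that no other configuration of $E$ produces a larger quotient. Everything else reduces to elementary integration of power functions.
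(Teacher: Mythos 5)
The paper does not actually prove this lemma: it is introduced with ``Let us first recall the following result regarding power weights,'' the first two items being the classical power-weight computation (cf.\ \cite{grafakosClassical}) and the third coming from the restricted-weights literature (\cite{KermanTorchinsky,ChungHuntKurtz,CarroOrtiz}). So there is no in-paper argument to compare against. Judged on its own, your plan is correct and follows the standard route: the necessity of $\beta>-1$ and $\beta<p-1$ from local integrability of $w$ and its dual weight, the explicit product on symmetric intervals, the reduction of a general interval to either a ``far from the origin'' interval (where $|x|^{\beta}$ is essentially constant) or a symmetric one, and the deduction of the $A_{\infty}$ statement are all sound, and your Kolmogorov-quotient computation $(\varepsilon/r)^{1-(\beta+1)/p}$ correctly isolates the endpoint $\beta=p-1$.

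The one step you flag but do not carry out --- that no configuration of $E$ beats $E=(-\varepsilon,\varepsilon)$ --- is genuinely needed at $\beta=p-1$, since that is the only case not already covered by $A_p\subset A_p^{\mathcal{R}}$, but it closes easily by rearrangement. For $\beta\geq 0$ the weight $|x|^{\beta}$ is radially increasing, so among all measurable sets of a fixed measure $m$ the integral $\int_E|x|^{\beta}\,dx$ is minimized by $(-m/2,m/2)$; hence $w(E)\geq \frac{2}{\beta+1}(|E|/2)^{\beta+1}$ for every $E$. For any interval $Q$ with $\dist{Q,0}<|Q|$ one has $Q\subset(-2|Q|,2|Q|)$, so $w(Q)\leq \frac{2}{\beta+1}(2|Q|)^{\beta+1}$ (intervals with $\dist{Q,0}\geq|Q|$ fall under the essentially-constant case). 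At $\beta=p-1$ this gives
\[
\frac{|E|}{|Q|}\left(\frac{w(Q)}{w(E)}\right)^{1/p}
\leq
\frac{|E|}{|Q|}\cdot\frac{4|Q|}{|E|}
=4,
\]
which is the uniform bound required; the blow-up for $\beta>p-1$ is exactly your computation, and the range $\beta\in(-1,0]$ needs no separate treatment since there $|x|^{\beta}\in A_1\subset A_p$. With that observation inserted, your proof is complete.
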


Since $\nu\in A_{\infty}(\Lambda)$ if and only if $\Phi(\nu)\in A_{\infty}(\R)$, we need  $\beta>-1$. 

\subsection{\texorpdfstring{$L^{p}$ and $L^{p,1}(\Lambda,\nu)$}{}-solvability for the Dirichlet problem}

\begin{corollary}\label{corol:ejemploDirichlet}
Let  $\alpha\in (0,2)$ and  $\beta>-1$. Then, the Dirichlet problem is solvable in $L^p(\Lambda,\nu)$  with $\nu(\xi)
\coloneqq 
|\xi|^{\beta},$ if and only if $p\in (\alpha(\beta+1),\infty)$ and in $L^{p,1}(\Lambda,\nu)$ for any $p\in [\alpha(\beta+1),\infty)$.  Moreover, for $p=\alpha(\beta+1)$, the Dirichlet problem is not solvable in $L^p(\Lambda,\nu)$. 
\end{corollary}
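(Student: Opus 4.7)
The proof is essentially a direct computation once the relevant $A_p$-conditions are identified. First I would compute the pullback measure $\Phi(\nu)$ explicitly. Using $\Phi(x)=e^{i(1-\alpha)\pi/2}x^{\alpha}$ one has $|\Phi(x)|=|x|^{\alpha}$ and $|\Phi'(x)|\simeq_{\alpha}|x|^{\alpha-1}$, so
\[
\Phi(\nu)(x)
=
(\nu\circ\Phi)(x)\,|\Phi'(x)|
\simeq_{\alpha}
|x|^{\alpha(\beta+1)-1}.
\]
By Lemma \ref{lemma:powerWeights}, the assumption $\beta>-1$ together with $\alpha>0$ gives $\alpha(\beta+1)-1>-1$, so $\Phi(\nu)\in A_{\infty}(\R)$, consistent with $\nu\in A_{\infty}(\Lambda)$. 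Moreover, $\Phi(\nu)\in A_p(\R)$ if and only if $\alpha(\beta+1)-1<p-1$, i.e.\ $p>\alpha(\beta+1)$. Consequently $p_{\Phi}(\nu)=\alpha(\beta+1)$ and the infimum is not attained.

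Next I would deduce the $L^p$-solvability claims from Theorem \ref{thm:thm4.4_Kenig}. For $p>\alpha(\beta+1)$ the theorem yields $L^p(\Lambda,\nu)$-solvability directly, and for $p=\alpha(\beta+1)$ the same theorem produces a datum in $L^p(\Lambda,\nu)$ for which no harmonic extension with $\mathcal{M}_{\alpha_0}u\in L^p(\Lambda,\nu)$ exists, giving the asserted non-solvability. Since $L^{p,1}(\Lambda,\nu)\hookrightarrow L^p(\Lambda,\nu)$ continuously, the $L^p$-solvability for $p>\alpha(\beta+1)$ immediately implies $L^{p,1}(\Lambda,\nu)$-solvability for these exponents (taking, e.g., $Y=L^p(\Lambda,\nu)$).

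The only nontrivial point is $L^{p,1}$-solvability at the critical exponent $p=\alpha(\beta+1)$, for which I would invoke Theorem \ref{thm:dirichletEndpointPesos}. Its hypothesis $\Phi(\nu)\in A_{p_{\Phi}(\nu)}^{\mathcal{R}}(\R)$ becomes
\[
|x|^{\alpha(\beta+1)-1}\in A_{\alpha(\beta+1)}^{\mathcal{R}}(\R),
\]
and by Lemma \ref{lemma:powerWeights} this is equivalent to $\alpha(\beta+1)-1\in(-1,\alpha(\beta+1)-1]$, both of which hold (the lower bound from $\beta>-1$, the upper bound trivially). No real obstacle arises; the point is simply that for a pure power weight the exponent sits exactly at the upper endpoint of the admissible range for $A_p^{\mathcal{R}}$, which is precisely the threshold that $A_p$ just misses and that $A_p^{\mathcal{R}}$ captures, explaining why $L^{p,1}$-solvability holds at $p=\alpha(\beta+1)$ while $L^p$-solvability fails.
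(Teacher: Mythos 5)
Your proposal is correct and follows essentially the same route as the paper: compute $\Phi(\nu)(x)\simeq |x|^{\alpha(\beta+1)-1}$, read off from Lemma \ref{lemma:powerWeights} that $\Phi(\nu)\in A_p$ iff $p>\alpha(\beta+1)$ and $\Phi(\nu)\in A_p^{\mathcal{R}}$ iff $p\geq\alpha(\beta+1)$, and then invoke Theorem \ref{thm:thm4.4_Kenig} (for both the positive $L^p$ result and the non-solvability at $p=\alpha(\beta+1)$) and Theorem \ref{thm:dirichletEndpointPesos} (for the $L^{p,1}$ endpoint). The only addition is your explicit embedding argument $L^{p,1}\hookrightarrow L^p$ for the non-endpoint Lorentz claim, which the paper leaves implicit but which is the intended reasoning.
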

\begin{proof} We have that   $\Phi(\nu)\in A_{p}$ if and only if $\alpha(\beta+1)-1\in (-1,p-1)$ while $\Phi(\nu)\in A_{p}^{\mathcal{R}}$ if and only if $\alpha(\beta+1)-1\in (-1,p-1]$, that is if $\alpha(\beta+1)\leq p$. The result follows by Theorems \ref{thm:thm4.4_Kenig} and \ref{thm:dirichletEndpointPesos}.
\end{proof}

\subsubsection{Non-solvable Dirichlet-endpoint: cone with power-log measure}

One may wonder if the Dirichlet problem is always solvable in $L^{p,1}(\Lambda,\nu)$ for the endpoint $p=p_{\Phi}(\nu)$. This was already clarified in page 2028 of \cite{CarroOrtiz} for $\nu\equiv 1$, finding a domain $\Omega$ for which the problem is not solvable at $p_{\Phi}(\nu)$. Here we adapt their example, but instead of looking for a pathology in the domain, we fix the domain and choose a measure for which the Dirichlet problem is not solvable at the endpoint.

Consider again the cone of aperture $\alpha$. Define on it the weight $\nu$ 
\begin{equation}\label{eq:measureNonSolvableEndpointDirichlet}
\nu(\xi)
\coloneqq 
\frac{|\xi|^{\beta}}{1+\log_{+}{\frac{1}{|\xi|}}},\quad \xi\in\Lambda.
\end{equation}
Then
\[
 \Phi(\nu)(x)
=
\left(
\nu\circ\Phi
\right)
|\Phi'|
=
\frac{|x|^{\alpha\beta}}{1+\log_{+}\frac{1}{|x|^{\alpha}}}|x|^{\alpha-1}
=
\frac{|x|^{\alpha(\beta+1)-1}}{1+\log_{+}\frac{1}{|x|^{\alpha}}},
\quad x\in\R.
\]
Adapting the computations in page 2028 of \cite{CarroOrtiz}, we have that $p_{\Phi(\nu)}=\alpha(\beta+1)$ with $\Phi(\nu)\not\in A_{p_{\Phi}(\nu)}^{\mathcal{R}}$, and therefore we cannot guarantee $L^{p_{\Phi(\nu)},1}(\Lambda,\nu)$-solvability.

\subsection{\texorpdfstring{$L^{p}(\Lambda,\nu)$}{}-solvability for the Neumann and Regularity problems}

\begin{corollary}\label{corol:exampleLp}
Let $\alpha\in (0,2)$, $\beta>-1$ and $\nu(\xi)\coloneqq |\xi|^{\beta}$ as in Corollary \ref{corol:ejemploDirichlet}. The Neumann and the Regularity problems are solvable in $L^p(\Lambda, \nu)$ in the following cases: 
\begin{itemize}
    \item For $\alpha\in (0,1)$, and for every $p$ such that 
    \[
    \max\{1,\beta+1\}<p<\infty.
    \]
    \item For $\alpha\in (1,2)$ and for every $p$ such that 
    \[
   \max\{1,\beta+1\}< p<     
    \max\left\{1,\frac{\alpha(\beta+1)}{\alpha-1}\right\}.
    \]
\end{itemize}
\end{corollary}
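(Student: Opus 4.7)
The plan is to apply Theorem \ref{thm:Lp_Solvability} for the Neumann case and Theorem \ref{thm:regularitySolvability} for the Regularity case, both of which reduce the $L^p(\Lambda,\nu)$-solvability to the condition
\[
|\Phi'|^{-p}\,\Phi(\nu)\in A_p(\mathbb{R}).
\]
The task is therefore purely computational: identify the exponent of the resulting power weight on $\mathbb{R}$ and apply Lemma \ref{lemma:powerWeights} to determine when it lies in $A_p$.

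First, I would plug in the explicit formulas already derived above for the cone of aperture $\alpha\pi$ with $\nu(\xi)=|\xi|^\beta$, namely
\[
|\Phi'(x)|\simeq_{\alpha}|x|^{\alpha-1},\qquad \Phi(\nu)(x)=|x|^{\alpha(\beta+1)-1}.
\]
This gives
\[
|\Phi'(x)|^{-p}\,\Phi(\nu)(x)\simeq |x|^{\gamma},\quad\text{with}\quad \gamma\coloneqq \alpha(\beta+1)-1-p(\alpha-1).
\]
By Lemma \ref{lemma:powerWeights}, membership in $A_p(\mathbb{R})$ is equivalent to the two inequalities $-1<\gamma<p-1$.

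The second step is to rewrite these two inequalities cleanly. The right-hand inequality $\gamma<p-1$ simplifies to $\alpha(\beta+1)<p\alpha$, i.e.\ $p>\beta+1$; combined with the standing requirement $p>1$, this yields the lower bound $p>\max\{1,\beta+1\}$. The left-hand inequality $-1<\gamma$ simplifies to $p(\alpha-1)<\alpha(\beta+1)$. Here we split on the sign of $\alpha-1$: if $\alpha\in(0,1)$ then $\alpha-1<0$ and (since $\alpha(\beta+1)>0$ whenever $\beta>-1$) the inequality holds automatically for all $p\geq 1$, giving no upper bound; if $\alpha\in(1,2)$ then $\alpha-1>0$ and the inequality becomes $p<\alpha(\beta+1)/(\alpha-1)$. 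Intersecting with the lower bound, and noting that one needs in any case $p>1$ so that any upper bound below $1$ produces an empty range, the final solvability interval is
\[
\left(\max\{1,\beta+1\},\ \max\left\{1,\tfrac{\alpha(\beta+1)}{\alpha-1}\right\}\right),
\]
which is exactly the statement of the corollary.

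There is no real obstacle in this proof: the only care needed is in the case split on $\alpha-1$ and in handling the degenerate situation in which the computed upper endpoint falls below $1$, where the convention of taking the maximum with $1$ correctly produces the empty interval predicted by the theory. The same computation works verbatim for the Regularity case since the criterion in Theorem \ref{thm:regularitySolvability} is identical.
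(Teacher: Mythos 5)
Your proposal is correct and follows essentially the same route as the paper: reduce to the condition $|\Phi'|^{-p}\Phi(\nu)\in A_p(\mathbb{R})$ via Theorem \ref{thm:Lp_Solvability} (and Theorem \ref{thm:regularitySolvability} for the Regularity case), compute the exponent of the resulting power weight, and apply Lemma \ref{lemma:powerWeights}, splitting on the sign of $\alpha-1$. The algebra matches the paper's exactly (your $\gamma=\alpha(\beta+1)-1-p(\alpha-1)$ is the paper's $\alpha\beta+(\alpha-1)(1-p)$), so there is nothing to add.
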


\begin{proof}
We have that
\[
|\Phi'|^{-p}\Phi(\nu)
=
|x|^{\alpha\beta+(\alpha-1)(1-p)}
\in 
A_p(\R)\ \iff\  (\alpha-1)(1-p)+\alpha\beta\in (-1,p-1).
\]

This is equivalent to:
\begin{itemize}
    \item When $\alpha\in (0,1)$, we have $p>\beta+1$.
    \item When $\alpha\in (1,2)$, then $p\in \left(\beta+1,\frac{\alpha}{\alpha-1}(\beta+1)\right)$.
\end{itemize}
Therefore, we have:
\begin{itemize}
    \item For $\alpha\in (0,1)$, 
    \[
    p_{-}=\max\{1,\beta+1\},\quad p_{+}=\infty.
    \]
    \item For $\alpha\in (1,2)$,
    \[
    p_{-}
    =
    \max\{1,\beta+1\}, \quad   p_{+}
    =
    \max\left\{1,\frac{\alpha(\beta+1)}{\alpha-1}\right\},
    \]

\end{itemize}
   and the result follows by Theorem \ref{thm:Lp_Solvability}.
\end{proof}

We notice here  that, in the case $\alpha\in (1,2)$, the range $(p_{+},p_{-})$ can be any subinterval of $(1,\infty)$. Indeed, given any $1\leq a<b<\infty$ and taking 
\[
\begin{cases}
    \beta=a-1\\
    \alpha=\left(\frac{b}{a}-1\right)^{-1}+1,
\end{cases}
\]
the range of solvability is $(a,b)$. Also, it can be the empty set if $a<b<1$.

\subsection{\texorpdfstring{$H_{at}^{1}(\Lambda,\nu)$}{}-solvability for the Neumann problem}\label{section:examplesH1}

\begin{corollary}\label{corol:exampleH1}
Let $\beta>-1$. Then, the Neumann problem is solvable in $H_{at}^{1}(\Lambda, \nu)$ with $\nu(\xi)=|\xi|^{\beta}$  in the following cases: 
\begin{itemize}
    \item For $\alpha\in (0,1)$ and $\beta\in (-1,0]$.
    \item For $\alpha\in (1,2)$ and $\beta\in (-\frac{1}{\alpha},1]$.
\end{itemize}
\end{corollary}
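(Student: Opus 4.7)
The plan is to verify the hypothesis $\nu\circ\Phi\in A_{1}(\R)$ of Theorem \ref{thm:H1solvability} where it holds, and to extend the half-plane argument of Theorem \ref{thm:H1SolvabilityHalfPlane} from $A_{1}$ to the $A_\infty$-regime of power weights where it is needed. The key input is the explicit formula $|\Phi(x)|=|x|^\alpha$ recalled at the start of this section, which gives
\begin{equation*}
(\nu\circ\Phi)(x)=|\Phi(x)|^\beta=|x|^{\alpha\beta}.
\end{equation*}

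First I compute the direct range. By Lemma \ref{lemma:powerWeights}, $|x|^{\alpha\beta}\in A_{1}(\R)$ iff $\alpha\beta\in(-1,0]$, i.e.\ $\beta\in(-1/\alpha,0]$. For $\alpha\in(0,1)$ this, combined with the background condition $\beta>-1$ (needed for $\nu\in A_{\infty}(\Lambda)$ via Lemma \ref{phiNuAInfty}), reduces to $\beta\in(-1,0]$ since $-1/\alpha<-1$; Theorem \ref{thm:H1solvability} then yields the first case. For $\alpha\in(1,2)$ and $\beta\in(-1/\alpha,0]$, the same theorem yields the negative part of the second case.

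For the remaining positive subrange $\alpha\in(1,2)$, $\beta\in(0,1]$, one has $\alpha\beta\in(0,\alpha]$, so $\nu\circ\Phi\in A_\infty\setminus A_{1}$ and Theorem \ref{thm:H1solvability} as stated does not apply. However, Corollary \ref{corol:equivalenciaNormaH1DatoNeumann} still transfers the problem to the half-plane, since only $\nu\circ\Phi\in A_\infty$ is required there, and everything comes down to establishing
\begin{equation*}
\|\mathcal{M}_{\alpha}(\nabla u_{f,N})\|_{L^1(\R,\,|x|^{\alpha\beta})}
\lesssim
\|f\|_{H^1_{at}(\R,\,|x|^{\alpha\beta})}.
\end{equation*}
I would prove this by revisiting the atomic argument in the proof of Theorem \ref{thm:H1SolvabilityHalfPlane}. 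The $L^p\to L^p$ estimate needed to close the atomic decomposition is immediate from Lemma \ref{thm:sparseDominationNonTangential}, since $|x|^{\alpha\beta}\in A_p(\R)$ for every $p>1+\alpha\beta$. For the uniform atomic bound $\|\mathcal{M}_{\alpha}(\nabla u_{a,N})\|_{L^1(\R,\,|x|^{\alpha\beta})}\lesssim 1$, the identity $|\nabla u_{a,N}|=|P_y\ast(a+iHa)|$ reduces it to a weighted $H^1$-bound on $a+iHa$, which I would obtain by splitting the integral of $|Ha|\,|x|^{\alpha\beta}$ into the support interval $I$ of $a$ (using $L^2$-boundedness of the Hilbert transform and Cauchy--Schwarz when $\alpha\beta<1$, together with the normalization $\|a\|_\infty\leq 1/(|x|^{\alpha\beta})(I)$) and its complement (using the pointwise decay $|Ha(x)|\lesssim\|a\|_\infty|I|/\dist{x,I}^2$ coming from the mean-zero condition of $a$, together with the power growth of $|x|^{\alpha\beta}$).

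The main obstacle is precisely this atomic estimate outside the $A_{1}$-range, specifically when $\alpha\beta\in[1,\alpha]$, where $|x|^{\alpha\beta}\notin A_{2}$ and the standard $L^2$-based argument on the support of the atom is tight. For power weights of this form, however, the scale invariance and the explicit form of the Hilbert transform make the estimate accessible by elementary computations, replacing the $L^2$-bound by a suitable $L^q$-bound for $q$ large enough that $|x|^{\alpha\beta}\in A_q$, so the extension should go through and the full claimed range $\beta\in(-1/\alpha,1]$ is recovered.
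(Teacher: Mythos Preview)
Your computation $(\nu\circ\Phi)(x)=|x|^{\alpha\beta}$ and the application of Theorem \ref{thm:H1solvability} via the $A_1$-condition are exactly the paper's argument: the paper's entire proof is one line, checking when $|x|^{\alpha\beta}\in A_1(\R)$. This yields $\alpha\beta\in(-1,0]$, hence $\beta\in(-1,0]$ when $\alpha\in(0,1)$ and $\beta\in(-1/\alpha,0]$ when $\alpha\in(1,2)$.

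You are right that the endpoint ``$1]$'' in the second item of the statement is not reached by the $A_1$-argument. The paper's proof does \emph{not} address the subrange $\beta\in(0,1]$ for $\alpha\in(1,2)$ at all; it literally says ``the result follows checking when this weight is in $A_1$'', which gives only $\beta\in(-1/\alpha,0]$. So the discrepancy you spotted is a misprint in the statement (it should read $0]$, not $1]$), not a gap in your understanding.

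As for your attempted extension to $\beta\in(0,1]$: this goes beyond what the paper does, and what you have written is a sketch, not a proof. The reduction via Corollary \ref{corol:equivalenciaNormaH1DatoNeumann} is fine (only $A_\infty$ is needed there, and $|x|^{\alpha\beta}\in A_\infty$ for all $\alpha\beta>-1$), and the transfer of the non-tangential estimate from $\R_+^2$ to $\Omega$ through Theorem \ref{thm:analogoLema4.1CNO} also needs only $\nu\circ\Phi\in A_\infty$, which holds. But the half-plane estimate $\|\mathcal{M}_\alpha(\nabla u_{a,N})\|_{L^1(|x|^{\alpha\beta})}\lesssim 1$ for $(\R,|x|^{\alpha\beta})$-atoms when $\alpha\beta\in(0,2)$ is not established: the paper's proof of Theorem \ref{thm:H1SolvabilityHalfPlane} invokes Theorem II.3.4 of \cite{GarciaCuerva}, which is stated for $A_1$-weights, and your replacement argument is only described heuristically. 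In particular, once $\alpha\beta\geq 1$ the near-support piece needs an $L^q$-bound with $q>1+\alpha\beta$, and you have not checked that this meshes with the atom's size normalization; the far-away piece also needs $|x|^{\alpha\beta}$ integrated against a $|x|^{-2}$ decay, which is delicate when $\alpha\beta$ approaches $1$. None of this is insurmountable for power weights, but it is not carried out here.
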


\begin{proof} Clearly $(\nu\circ \Phi)(x)=|x|^{\alpha\beta}$ and the result follows checking when this weight is in $A_1$. 
\end{proof}

\begin{corollary}
    The Neumann problem in a cone is solvable in $H_{at}^1(\Lambda, \nu)$ with 
    \begin{equation*}\label{eq:logWeightExampleH1}
\nu(\xi)
\coloneqq 
\begin{cases}
    \log{\frac{1}{|\xi|}} & \text{ if }|\xi|<\frac{1}{e},\\
    1 & \text{ if }|\xi|\geq \frac{1}{e},
\end{cases}    
\end{equation*}
\end{corollary}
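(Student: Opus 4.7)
The plan is to reduce the statement to verifying that the weight $w \coloneqq \nu \circ \Phi$ belongs to $A_1(\mathbb{R})$, because Theorem \ref{thm:H1solvability} then yields the conclusion directly. First I would compute $w$ explicitly. Since on the boundary of the cone $\Phi(x) = e^{i(1-\alpha)\pi/2} x^{\alpha}$ satisfies $|\Phi(x)| = |x|^{\alpha}$, substituting into the definition of $\nu$ gives
\[
w(x) \;=\; \max\{1,\; \alpha \log(1/|x|)\} \;=\; \begin{cases} \alpha \log(1/|x|) & \text{if } |x| < e^{-1/\alpha}, \\ 1 & \text{if } |x| \geq e^{-1/\alpha}. \end{cases}
\]
Because $\alpha \in (0,2)$ is fixed, $w$ is pointwise comparable, with constants depending only on $\alpha$, to the classical weight
\[
w_0(x) \;\coloneqq\; 1 + \log_{+}(1/|x|).
\]

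The next step is to verify that $w_0 \in A_1(\mathbb{R})$. Since membership in $A_1$ is invariant under pointwise equivalence of weights, this automatically gives $w \in A_1(\mathbb{R})$. To check the $A_1$ condition $\mathcal{M}_{hl} w_0 \lesssim w_0$, I would split into cases according to the location of an interval $I \subset \mathbb{R}$. If $I$ lies away from the origin in the sense that $\textrm{dist}(I,0) \gtrsim |I|$, then $w_0$ oscillates by at most a bounded factor on $I$, so the average of $w_0$ over $I$ is comparable to the infimum of $w_0$ on $I$. For an interval $I = (-s,s)$ near the origin, the explicit identity $\int_0^s \log(1/y)\, dy = s(\log(1/s) + 1)$ gives
\[
\frac{1}{|I|}\int_I w_0 \;\simeq\; 1 + \log_{+}(1/s),
\]
which is comparable to $\essinf_{x \in I} w_0(x)$. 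The general case of an interval close to but not containing $0$ reduces to these two by elementary manipulations. This $A_1$ property of $w_0$ is classical and appears, for instance, in Garc\'ia-Cuerva–Rubio de Francia.

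With $w = \nu \circ \Phi \in A_1(\mathbb{R})$ established, Theorem \ref{thm:H1solvability} yields unique solvability of the Neumann problem in $X = H_{at}^1(\Lambda, \nu)$ taking $Y = L^1(\Lambda, \nu)$, completing the proof. I do not expect any substantive obstacle here; the content of the corollary is essentially the observation that $\nu$ was designed so that the pullback $\nu \circ \Phi$ is, up to a constant factor, the standard logarithmic $A_1$ weight on $\mathbb{R}$, which is precisely the hypothesis required by Theorem \ref{thm:H1solvability}.
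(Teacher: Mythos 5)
Your verification that $w=\nu\circ\Phi=\max\{1,\alpha\log(1/|x|)\}\simeq_{\alpha}1+\log_{+}(1/|x|)$ is an $A_1(\R)$ weight is correct and coincides with the first step of the paper's proof (which simply cites a textbook example for this fact). However, there is a genuine gap: Theorem \ref{thm:H1solvability} carries the standing hypothesis $\nu\in A_{\infty}(\Lambda)$ in addition to $\nu\circ\Phi\in A_1(\R)$, and you never verify it. By Lemma \ref{phiNuAInfty}, $\nu\in A_{\infty}(\Lambda)$ is equivalent to
\[
\Phi(\nu)(x)=(\nu\circ\Phi)(x)\,|\Phi'(x)|\simeq \max\Bigl\{1,\log\tfrac{1}{|x|^{\alpha}}\Bigr\}\,|x|^{\alpha-1}\in A_{\infty}(\R),
\]
and this does \emph{not} follow from $\nu\circ\Phi\in A_1$ alone: the product of an $A_1$ weight with the $A_2$ weight $|\Phi'|$ need not even be locally integrable in general (e.g.\ $|x|^{-1/2}\cdot|x|^{-3/4}$). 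This hypothesis is genuinely used in the proof of Theorem \ref{thm:H1solvability}, for instance in the norm equivalence for the boundary data and in the transference of Hardy-space norms, so it cannot be waved away.

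In fact, the bulk of the paper's proof of this corollary is devoted precisely to this missing check. For $\alpha\in[1,2)$ the paper argues as follows: choose $\gamma>1$ with $w^{\gamma}\in A_1$ (Lemma \ref{lemma:improvingA1}), pick $\beta=\tfrac{\alpha-1}{1-1/\gamma}>-1$ so that $|x|^{\beta}\in A_p$ for suitable $p$, and use the convexity property of $A_p$ (last statement of Lemma \ref{lemma:improvingA1}) to conclude that $(w^{\gamma})^{1/\gamma}(|x|^{\beta})^{1-1/\gamma}=w(x)|x|^{\alpha-1}\in A_p$. For $\alpha\in(0,1)$ this choice of $\beta$ fails to satisfy $\beta>-1$, and the paper notes that a more involved factorization argument is needed (which it omits). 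You should either supply an argument of this kind or verify directly (say, by testing the $A_p$ condition on intervals centered at and away from the origin) that $\max\{1,\log(1/|x|^{\alpha})\}|x|^{\alpha-1}\in A_{\infty}(\R)$ for the relevant range of $\alpha$; without it the appeal to Theorem \ref{thm:H1solvability} is not justified.
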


\begin{proof}
$\nu\in A_1(\R)$ as a consequence of Example 7.1.8 in page 507 of \cite{grafakosClassical}. If $w:=\nu\circ\Phi$, we also have to check that $\Phi(\nu)=w(x)|\Phi'(x)|=\max\{1,|\log{\frac{1}{|x|^{\alpha}}}|\}|x|^{\alpha-1}\in A_{\infty}(\R)$. If $\alpha\in [1,2)$, this can be justified briefly. Indeed, since $w\in A_1$, there exists $\gamma>1$ such that $w^{\gamma}\in A_1$. On the other hand, given $\beta>-1$, we have $|x|^{\beta}\in A_p$ for some $p\in (1,\infty)$. Therefore, by interpolation, since $w^{\gamma}\in A_p$ as well, we have
\[
(w^{\gamma}(x))^{\frac{1}{\gamma}}(|x|^{\beta})^{1-\frac{1}{\gamma}}
=
w(x)\cdot |x|^{\beta(1-\frac{1}{\gamma})}
\in A_p.
\]
We want $\alpha-1=\beta(1-\frac{1}{\gamma})$. So $\beta=\frac{\alpha-1}{1-\frac{1}{\gamma}}$. We need $\beta>-1$. If $\alpha\in [1,2)$ this is clear. 

If $\alpha\in (0,1)$, then to prove that $\Phi(\nu)\in A_{\infty}$ we need a more tedious argument using the factorization theorem. We omit it for the sake of brevity.
\end{proof}

\begin{remark}
    This example shows that Theorem \ref{thm:H1solvability} is more general than Theorem 5.1 in
    \cite{LanzaniCapognaBrown}, because it allows us to prove solvability for measures on the boundary that are not of power-type.
\end{remark}

\begin{corollary} Let $w\in A_1(\R)$. Then, the Neumann problem  in a cone with angle $\alpha\pi$ and $\alpha\in [1,2)$  is solvable in $H_{at}^1(\Lambda,\nu)$ with 
\begin{equation}\label{eq:exampleConformalWeight_1}
\nu(\xi)
\coloneqq 
(w\circ \Phi^{-1})(\xi),
\quad
\xi\in\Lambda,
\end{equation}
\end{corollary}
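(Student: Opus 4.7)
The plan is to apply Theorem~\ref{thm:H1solvability} directly to $\nu \coloneqq w\circ\Phi^{-1}$. One of the two hypotheses of that theorem is immediate, since $\nu\circ\Phi = w \in A_1(\R)$ by assumption. Hence everything reduces to verifying the remaining condition $\nu\in A_\infty(\Lambda)$, which by Lemma~\ref{phiNuAInfty} amounts to showing
$$
\Phi(\nu)(x)=(\nu\circ\Phi)(x)\,|\Phi'(x)|= w(x)\,|\Phi'(x)| \in A_\infty(\R).
$$

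For the cone of aperture $\alpha\pi$ one has $|\Phi'(x)| \simeq_\alpha |x|^{\alpha-1}$, so the task is to verify $w(x)\,|x|^{\alpha-1}\in A_\infty(\R)$. When $\alpha=1$ this is immediate, as $w\in A_1\subset A_\infty$. For $\alpha\in(1,2)$ I run the same Jones factorization/interpolation argument used in the preceding corollary: Lemma~\ref{lemma:improvingA1}(1) yields some $\gamma>1$ with $w^\gamma\in A_1$; I then choose $\beta\coloneqq \tfrac{\gamma(\alpha-1)}{\gamma-1}\geq 0$ so that $\beta(1-\tfrac{1}{\gamma})=\alpha-1$, and fix any $p>\beta+1$, for which $|x|^\beta\in A_p$ by Lemma~\ref{lemma:powerWeights} and $w^\gamma\in A_1\subset A_p$. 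The convexity statement in Lemma~\ref{lemma:improvingA1} (applied with exponents $\tfrac{1}{\gamma}$ and $1-\tfrac{1}{\gamma}$ summing to $1$) then gives
$$
w(x)\,|x|^{\alpha-1}
= (w^\gamma)^{1/\gamma}(|x|^\beta)^{1-1/\gamma}
\in A_p(\R)\subset A_\infty(\R),
$$
as required.

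The genuinely delicate step is the $A_\infty$-check in the second paragraph, not the invocation of Theorem~\ref{thm:H1solvability}; once this check is in hand, the conclusion is mechanical. The restriction $\alpha\in[1,2)$ enters precisely there: the exponent $\alpha-1$ is nonnegative, so the factor $|x|^{\alpha-1}$ can be absorbed into a convex combination of $A_1$-weights with $w$. For $\alpha\in(0,1)$ the exponent would be negative and this direct interpolation would break down, requiring a more delicate factorization, as the authors themselves flag in the preceding example.
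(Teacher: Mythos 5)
Your proposal is correct. The reduction to Theorem \ref{thm:H1solvability} and the identification of $w(x)\,|x|^{\alpha-1}\in A_\infty(\R)$ as the only nontrivial check match the paper exactly, but you verify that check by a different (equally valid) route. The paper's own proof is a one-liner via the Jones factorization theorem: since $\alpha\in[1,2)$ gives $1-\alpha\in(-1,0]$, the weight $|\Phi'(x)|^{-1}=|x|^{1-\alpha}$ lies in $A_1$, so writing $w\,|\Phi'| = w\cdot\bigl(|\Phi'|^{-1}\bigr)^{1-2}$ exhibits it as $u_0u_1^{1-2}$ with $u_0,u_1\in A_1$, hence $w\,|\Phi'|\in A_2\subset A_\infty$. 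You instead recycle the self-improvement-plus-convexity argument that the paper uses for the \emph{preceding} corollary (the logarithmic weight): take $\gamma>1$ with $w^\gamma\in A_1$, set $\beta=\gamma(\alpha-1)/(\gamma-1)\ge 0$ so that $w\,|x|^{\alpha-1}=(w^\gamma)^{1/\gamma}(|x|^\beta)^{1-1/\gamma}\in A_p$ for $p>\beta+1$. Both arguments place the restriction $\alpha\in[1,2)$ in the same spot (nonnegativity of $\alpha-1$, equivalently $|x|^{1-\alpha}\in A_1$), and your closing remark about the failure for $\alpha\in(0,1)$ is consistent with the paper. The factorization route is shorter and yields the sharper conclusion $w\,|\Phi'|\in A_2$; yours costs an extra exponent bookkeeping step but needs nothing beyond Lemma \ref{lemma:improvingA1} and Lemma \ref{lemma:powerWeights}.
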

\begin{proof} The requirement of Theorem \ref{thm:H1solvability} is that $w\in A_1(\R)$, and that $(\nu\circ\Phi)|\Phi'|=w\cdot |\Phi'|\in A_{\infty}(\R)$. The first condition is clear and 
also the second condition since $ |\Phi'(x)|^{-1}=|x|^{1-\alpha}\in A_1$, and the result follows by the factorization theorem. 
\end{proof}

\subsection{\texorpdfstring{$L^{p_{\pm},1}(\Lambda,\nu)$}{}-solvability for the Neumann and Regularity problems}\label{sec:examplesLp1}

From Corollary \ref{corol:exampleLp}, we have that 
\[
p_{+}
=
\begin{cases}
    \infty & \text{ if }\alpha\in (0,1],\\
    \max\{1,\frac{\alpha(\beta+1)}{\alpha-1}\} & \text{ if }\alpha\in (1,2).
\end{cases}
\]
That is, $L^{p_{+},1}$-solvability only makes sense if $\alpha\in (1,2)$ and $\alpha \beta>-1$. We also have
\[
p_{-}
=
\max\{1,\beta+1\},
\]
so it makes sense to consider $L^{p_{-},1}$-solvability when $\beta>0$. Under these conditions, we obtain:

\begin{corollary}\label{corol:exampleLp1}
    In the setting of Corollary \ref{corol:exampleLp}, if 
    \[
    \alpha\beta+1>0\quad\mbox{and}\quad \beta>-1,
    \]
the Neumann problem is solvable in     $L^{\frac{\alpha(\beta+1)}{\alpha-1}, 1}(\Lambda,\nu)$   and  in  $L^{\beta+1,1}(\Lambda,\nu)$. 
\end{corollary}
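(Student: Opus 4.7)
My plan is to apply Theorem~\ref{thm:Lp1_Solvability} at each of the two endpoints. With $u := |\Phi'(\cdot)| \simeq_\alpha |x|^{\alpha-1}$ and $v := \Phi(\nu) = |x|^{\alpha(\beta+1)-1}$, we have $v/u = |x|^{\alpha\beta}$, and the hypothesis $\alpha\beta+1 > 0$ is exactly what makes $v/u$ locally integrable at the origin, while $\beta > -1$ ensures $v \in A_\infty$. Thus all relevant quantities become explicit power weights, and what needs to be checked is $(u,v) \in \mathcal{S}_{p_\pm}^{\mathcal{R}}$ in both cases, plus \eqref{eq:conditionForWellDefined} in the $p_-$ case.

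For the $\mathcal{S}_p^{\mathcal R}$-membership I invoke Theorem~\ref{thm:sufficientConditionSpR}. Its hypothesis $v \in A_\infty$ is already in hand. Condition~\eqref{eq:necessaryWeakConditionSpR}, computed on a symmetric interval $I=[-R,R]$, collapses to the constant ratio $(\beta+1)/(\alpha\beta+1)$; off-center and non-symmetric intervals reduce to this by the doubling of power weights and the comparability $|\Phi'| \simeq |x|^{\alpha-1}$. For the mixed condition~\eqref{eq:sufficientConditionSpR}, each $A_q(\mu)$ or $A_q^{\mathcal R}(\mu)$ membership of a power weight against a power measure reduces to an exponent inequality obtained by the standard $A_p$-average on $[-R,R]$. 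Carrying this out at $p=p_+$ shows that $v/u \in A_{p_+}(u)$ holds strictly (the product of $A_p$-averages becomes a constant independent of $R$), whereas the dual integral $\int u^{1-p_+}(v/u)\,dx = \int |x|^{-1}\,dx$ diverges logarithmically, so $u \notin A_{p_+'}(v/u)$; however, the Kolmogorov formulation $\sup_{E\subset Q}\frac{(v/u)(E)}{(v/u)(Q)}\bigl(\frac{v(Q)}{v(E)}\bigr)^{1/p_+'}$ is finite thanks once more to $\alpha\beta+1 > 0$, placing $u$ in $A_{p_+'}^{\mathcal R}(v/u)$ at the endpoint. This realizes the first alternative of~\eqref{eq:sufficientConditionSpR}. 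The $p_-$ case is treated symmetrically: duality (Proposition~\ref{prop:dualitySpR}(1)) interchanges the roles of $u$ and $v/u$, and the analogous computation yields the second alternative of~\eqref{eq:sufficientConditionSpR}.

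For the remaining condition~\eqref{eq:conditionForWellDefined} at $p_-=\beta+1$, I would verify the estimate $\int_E \frac{|x|^{\alpha-1}}{1+|x|}\,dx \lesssim \bigl(\int_E |x|^{\alpha(\beta+1)-1}\,dx\bigr)^{\beta/(\beta+1)}$ by splitting $E$ according to $|x|\lessgtr 1$, bounding $(1+|x|)^{-1}$ pointwise on each piece, and applying Hölder with the conjugate pair $(p_-,p_-')$ together with the explicit form of the power integrals. The main obstacle throughout is the careful book-keeping of the $A_p$ versus $A_p^{\mathcal R}$ distinction at the endpoints: in each instance the ordinary $A_p$ condition degenerates (via a logarithmic divergence in the dual integrand), and it is precisely the $\mathcal R$-refinement supplied by Theorem~\ref{thm:sufficientConditionSpR} which rescues the argument exactly at $p=p_\pm$.
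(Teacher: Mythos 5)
Your main argument coincides with the paper's own proof: both invoke Theorem~\ref{thm:sufficientConditionSpR} with $u=|\Phi'|\simeq|x|^{\alpha-1}$, $v=\Phi(\nu)=|x|^{\alpha(\beta+1)-1}$, $v/u=|x|^{\alpha\beta}$, and reduce everything to the exponent characterizations of $|x|^{\gamma}\in A_q(|x|^{\delta})$ and $|x|^{\gamma}\in A_q^{\mathcal R}(|x|^{\delta})$. Your endpoint bookkeeping is correct and in fact more explicit than the paper's: at $p_+$ the dual integrand for $u\in A_{p_+'}(v/u)$ degenerates to $|x|^{-1}$ so only the restricted class survives, while $v/u\in A_{p_+}(u)$ holds strictly, and at $p_-$ the two alternatives of \eqref{eq:sufficientConditionSpR} exchange roles; the hypothesis $\alpha\beta+1>0$ enters exactly as the condition $\delta>-1$ for the base measure $|x|^{\alpha\beta}$. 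Up to this point there is nothing to object to.

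The one step that would fail is your verification of \eqref{eq:conditionForWellDefined}. The inequality
\[
\int_E \frac{|x|^{\alpha-1}}{1+|x|}\,dx \;\lesssim\; \Bigl(\int_E |x|^{\alpha(\beta+1)-1}\,dx\Bigr)^{\beta/(\beta+1)}
\]
is simply false on part of the admissible parameter range: for $E=(0,\varepsilon)$ the left-hand side is $\simeq\varepsilon^{\alpha}$ while the right-hand side is $\simeq\varepsilon^{\alpha\beta}$, so the estimate fails as $\varepsilon\to0$ whenever $\beta>1$; likewise $E=(1,R)$ with $R\to\infty$ rules out $\alpha(1-\beta)>1$. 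No splitting at $|x|=1$ combined with H\"older can rescue a false inequality. (The paper's own proof is silent on this condition, so you are being more conscientious, but the specific claim does not hold.) What is actually required for the Neumann integral to be well defined is the finiteness of the weak norm in \eqref{eq:finiteNormWeights}, which after the H\"older step involves $(\nu\circ\Phi)^{-1}=|x|^{-\alpha\beta}$: a direct computation of the distribution function of $|x|^{-\alpha\beta}(1+|x|)^{-1}$ with respect to $|x|^{\alpha(\beta+1)-1}\,dx$ shows that its level sets near the origin have $\Phi(\nu)$-measure $\simeq\lambda^{-p_-'}$, which is exactly critical, so the $L^{p_-',\infty}(\Phi(\nu))$-norm is finite for all $\beta>0$ with $\alpha\beta+1>0$. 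Equivalently, the Kolmogorov reformulation of \eqref{eq:finiteNormWeights} carries the exponent $1/p_-$ rather than $1/p_-'$, and with that exponent your power-weight computation does close. Replace your treatment of \eqref{eq:conditionForWellDefined} by this direct verification of \eqref{eq:finiteNormWeights} and the argument is complete.
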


\begin{proof}  We want to use Theorem \ref{thm:sufficientConditionSpR} with $u=|\Phi'|$ and $v=\Phi(\nu)$; that is
\[
u(x)
=
|x|^{\alpha-1},
\quad 
v(x)
=
|x|^{\alpha\beta+\alpha-1},
\quad 
vu^{-1}(x)
=
|x|^{\alpha\beta}.
\]
Hence, we have to check that 
$|x|^{\alpha-1}\in A_{p'}^{\mathcal{R}}(|x|^{\alpha\beta})$ (or  $|x|^{\alpha-1}\in A_{p'}(|x|^{\alpha\beta})$), $|x|^{\alpha\beta}
\in A_p(|x|^{\alpha-1})$ (resp. $|x|^{\alpha\beta}
\in A_p^{\mathcal{R}}(|x|^{\alpha-1})$) and 
\begin{equation}\label{vuv}
\sup_{Q\subset \R}\frac{u(Q)(vu^{-1})(Q)}{|Q|v(Q)}<\infty. 
\end{equation}
Passing to balls using the doubling property and distinguishing cases as in page 506 of \cite{grafakosClassical}, we obtain \eqref{vuv} together with 
\begin{equation*}
|x|^{\gamma}\in A_p(|x|^{\delta})
\iff
\begin{cases}
    \delta>-1\\
    \delta+\gamma>-1\\
    \delta+\gamma-p'\gamma>-1,
\end{cases}
\quad\mbox{ and }\quad
|x|^{\gamma}\in A_p^{\mathcal{R}}(|x|^{\delta})
\iff
\begin{cases}
    \delta>-1\\
    \delta+\gamma>-1\\
    \delta+\gamma-p'\gamma\geq-1,
\end{cases}
\end{equation*}
and thus the result follows. Detailed computations for $|x|^{\gamma}\in A_p(|x|^{\delta})$ can be found in \cite{WickEtAl_Weights}.
\end{proof}

\section{Open problems}

The main open problem is to prove that the imposed conditions  for solvability are optimal:
\begin{enumerate}
    \item \textit{Given $p\in (1,\infty)$ and $\nu\in A_{\infty}(\Lambda)$ such that the Neumann problem is solvable in $X=L^p(\Lambda, \nu)$ with $Y=L^p(\Lambda, \nu)$, is it true that $|\Phi'|^{-p}\Phi(\nu)\in A_p(\R)$?}

    \item \textit{Given $\nu\in A_{\infty}(\Lambda)$ such that the Neumann problem is solvable in $X=H_{at}^{1}(\Lambda, \nu)$ with $Y=L^1(\Lambda, \nu)$, is it true that $\nu\circ\Phi\in A_1(\R)$?}
    
    \item \textit{Given $\nu\in A_{\infty}(\Lambda)$ such that the Neumann problem is solvable in $X=L^{p_{\pm},1}(\Lambda, \nu)$ with $Y=L^{p_{\pm},\infty}(\Lambda, \nu)$, is it true that $(|\Phi'|,\Phi(\nu))\in \mathcal{S}_p^{\mathcal{R}}$?}
\end{enumerate}

The $H_{at}^{1}(\Lambda,\nu)$-solvability of the Regularity problem also remains open.

\section*{Acknowledgements}

We thank the referee for the careful reading and several suggestions that have improved the clarity of the exposition. We also thank Pablo Hidalgo-Palencia for several helpful conversations regarding Partial Differential Equations, and in particular some suggestions that led to Section \ref{sec:duality}.


\begin{thebibliography}{30}

\bibitem{ArmitageHalfSpace}
D.~H. Armitage,
\textit{The Neumann problem for a function harmonic in ${\bf R}^n \times (0,\infty)$},
Arch. Rational Mech. Anal. \textbf{63} (1976), no.~1, 89--105.




\bibitem{AHMMT2020}
J.~Azzam, S.~Hofmann, J.~M. Martell, M.~Mourgoglou, and X.~Tolsa,
\textit{Harmonic measure and quantitative connectivity: geometric characterization of the \(L^p\)-solvability of the Dirichlet problem},
Invent. Math. \textbf{222} (2020), no.~3, 881--993.



\bibitem{AE_FBY}
F.~Ballesta-Yagüe,
\textit{Analytic extensions of $A_{\infty}$-weights on Lipschitz curves and their use in weighted Hardy spaces}, Arxiv Preprint, 2025. Available at: \url{https://arxiv.org/abs/2505.15278}. To appear in \textit{Proc. Amer. Math. Soc.} 



\bibitem{calderonCauchyIntegral}
A.~P. Calderón,
\textit{Cauchy integrals on Lipschitz curves and related operators},
Proc. Nat. Acad. Sci. U.S.A. \textbf{74} (1977), no.~4, 1324--1327.




\bibitem{carroDomingoSalazar}
M.~J. Carro and C.~Domingo-Salazar,
\textit{Stein’s square function \(G_\alpha\) and sparse operators},
J. Geom. Anal. \textbf{27} (2017), no.~2, 1624--1635.




\bibitem{zarembaCarro}
M.~J. Carro, T.~Luque, and V.~Naibo,
\textit{The Zaremba problem in two-dimensional Lipschitz graph domain}, Trans. Amer. Math. Soc. \textbf{378} (2025), no.~10, 6885--6911.





\bibitem{CNO}
M.~J. Carro, V.~Naibo, and C.~Ortiz-Caraballo,
\textit{The Neumann problem in graph Lipschitz domains in the plane},
Math. Ann. \textbf{385} (2023), no.~1--2, 17--57.



\bibitem{rellich}
M.~J. Carro, V.~Naibo, and M.~Soria-Carro,
\textit{Rellich identities for the Hilbert transform},
J. Funct. Anal. \textbf{286} (2024), no.~4, Paper No. 110271, 22 pp.



\bibitem{transmissionCarro}
M.~J. Carro, V.~Naibo, and M.~Soria-Carro,
\textit{Transmission problems for simply connected domains in the complex plane}, J. Differential Equations \textbf{433} (2025), Paper No.~113216, 35~pp.





\bibitem{CarroOrtiz}
M.~J. Carro and C.~Ortiz-Caraballo,
\textit{On the Dirichlet problem on Lorentz and Orlicz spaces with applications to Schwarz–Christoffel domains},
J. Differential Equations \textbf{265} (2018), no.~5, 2013--2033.





\bibitem{ChungHuntKurtz}
H.~M. Chung, R.~A. Hunt, and D.~S. Kurtz,
\textit{The Hardy-Littlewood maximal function on {$L(p,\,q)$} spaces with weights},
Indiana Univ. Math. J. \textbf{31} (1982), no.~1, 109--120.




\bibitem{coifmanWeiss}
R.~R. Coifman and G.~Weiss,
\textit{Extensions of Hardy spaces and their use in analysis},
Bull. Amer. Math. Soc. \textbf{83} (1977), no.~4, 569--645.



\bibitem{SawyerCUMP}
D.~Cruz-Uribe, J.~M. Martell, and C.~Pérez,
\textit{Weighted weak-type inequalities and a conjecture of Sawyer},
Int. Math. Res. Not. \textbf{2005} (2005), no.~30, 1849--1871.




\bibitem{CruzUribeSweeting}
D.~Cruz-Uribe and B.~Sweeting,
\textit{Weighted weak-type inequalities for maximal operators and singular integrals},
Rev. Mat. Complut. \textbf{38} (2025), no.~1, 183--205.






\bibitem{DahlbergHarmonicMeasure}
B.~E. J. Dahlberg,
\textit{Estimates of harmonic measure},
Arch. Rational Mech. Anal. \textbf{65} (1977), no.~3, 275--288.



\bibitem{DahlbergKenig}
B.~E.~J. Dahlberg and C.~E. Kenig,
\textit{Hardy spaces and the Neumann problem in {$L^p$} for Laplace's equation in Lipschitz domains},
Ann. of Math. (2) \textbf{125} (1987), no.~3, 437--465.





\bibitem{FJK1984}
E.~B. Fabes, D.~S. Jerison, and C.~E. Kenig,
\textit{Necessary and sufficient conditions for absolute continuity of elliptic-harmonic measure},
Ann. of Math. (2) \textbf{119} (1984), no.~1, 121--141.







\bibitem{follandRealAnalysis}
G.~B. Folland,
\textit{Real analysis}, 2nd ed.,
Pure and Applied Mathematics (New York), John Wiley \& Sons, Inc., New York, 1999.




\bibitem{GarciaCuerva}
J.~García-Cuerva,
\textit{Weighted \(H^p\) spaces},
Dissertationes Math. \textbf{162} (1979), 63 pp.



\bibitem{garnett}
J.~B. Garnett,
\textit{Bounded Analytic Functions},
Graduate Texts in Mathematics, vol. 236, Springer, New York, 2007.



\bibitem{grafakosClassical}
L.~Grafakos,
\textit{Classical Fourier Analysis}, 3rd ed.,
Graduate Texts in Mathematics, vol. 249, Springer, New York, 2014.






\bibitem{HMW}
R.~Hunt, B.~Muckenhoupt, and R.~Wheeden,
\textit{Weighted norm inequalities for the conjugate function and Hilbert transform},
Trans. Amer. Math. Soc. \textbf{176} (1973), 227--251.




\bibitem{JK1982}
D.~S. Jerison and C.~E. Kenig,
\textit{Boundary behavior of harmonic functions in nontangentially accessible domains},
Adv. Math. \textbf{46} (1982), no.~1, 80--147.


\bibitem{jerisonKenig}
D.~S. Jerison and C.~E. Kenig,
\textit{Hardy spaces, \(A_\infty\), and singular integrals on chord-arc domains},
Math. Scand. \textbf{50} (1982), no.~2, 221--247.




\bibitem{kenigWeighted}
C.~E. Kenig,
\textit{Weighted \(H^p\) spaces on Lipschitz domains},
Amer. J. Math. \textbf{102} (1980), no.~1, 129--163.



\bibitem{KermanTorchinsky}
R.~A. Kerman and A.~Torchinsky,
\textit{Integral inequalities with weights for the Hardy maximal function},
Studia Math. \textbf{71} (1981/82), no.~3, 277--284.



\bibitem{LanzaniCapognaBrown}
L.~Lanzani, L.~Capogna, and R.~M. Brown,
\textit{The mixed problem in {$L^p$} for some two-dimensional Lipschitz domains},
Math. Ann. \textbf{342} (2008), no.~1, 91--124.


\bibitem{LernerSparse}
A.~K. Lerner,
\textit{On pointwise estimates involving sparse operators},
New York J. Math. \textbf{22} (2016), 341--349.

\bibitem{LernerNazarov}
A.~K. Lerner and F.~Nazarov,
\textit{Intuitive dyadic calculus: the basics},
Expo. Math. \textbf{37} (2019), no.~3, 225--265.


\bibitem{WickEtAl_Weights}
J.~Li, C.-W. Liang, C.-Y. Shen, and B.~D. Wick,
\textit{Muckenhoupt-type weights and quantitative weighted estimates in the Bessel setting}, Math. Z. \textbf{309} (2025), no.~1, Paper No.~13, 29~pp.





\bibitem{LiOmbrosiPerez}
K.~Li, S.~Ombrosi, and C.~Pérez,
\textit{Proof of an extension of E. Sawyer's conjecture about weighted mixed weak-type estimates},
Math. Ann. \textbf{374} (2019), no.~1--2, 907--929.


\bibitem{MTregularity}
M.~Mourgoglou and X.~Tolsa,
\textit{The regularity problem for the Laplace equation in rough domains},
Duke Math. J. \textbf{173} (2024), no.~9, 1731--1837.

\bibitem{MTneumann}
M.~Mourgoglou and X.~Tolsa,
\textit{Solvability of the Neumann problem for elliptic equations in chord-arc domains with very big pieces of good superdomains},
Preprint (2025), \url{https://arxiv.org/abs/2407.20385}.


\bibitem{M1972}
B.~Muckenhoupt,
\textit{Weighted norm inequalities for the Hardy maximal function},
Trans. Amer. Math. Soc. \textbf{165} (1972), 207--226.



\bibitem{PereyraSparse}
M.~C. Pereyra,
\textit{Dyadic harmonic analysis and weighted inequalities: the sparse revolution},
in \textit{New trends in applied harmonic analysis. Vol. 2—harmonic analysis, geometric measure theory, and applications},
Appl. Numer. Harmon. Anal., Birkhäuser/Springer, Cham, 2019, pp.~159--239.





\bibitem{perezRoure}
C.~Pérez and E.~Roure-Perdices,
\textit{Sawyer-type inequalities for Lorentz spaces},
Math. Ann. \textbf{383} (2022), no.~1--2, 493--528.




\bibitem{steinSingularIntegrals}
E.~M. Stein,
\textit{Singular integrals and differentiability properties of functions},
Princeton Mathematical Series, no. 30, Princeton Univ. Press, Princeton, NJ, 1970.



\bibitem{steinHarmonic}
E.~M. Stein,
\textit{Harmonic Analysis: Real-Variable Methods, Orthogonality, and Oscillatory Integrals},
Princeton Mathematical Series, vol. 43, Princeton Univ. Press, Princeton, NJ, 1993.


\bibitem{VerchotaLayer}
G.~Verchota,
\textit{Layer potentials and regularity for the Dirichlet problem for Laplace's equation in Lipschitz domains},
J. Funct. Anal. \textbf{59} (1984), no.~3, 572--611.


\end{thebibliography}
\end{document}